\newtheorem{theorem}{Theorem}
\newtheorem{lemma}[theorem]{Lemma}
\newtheorem{proposition}[theorem]{Proposition}
\newtheorem{definition}[theorem]{Definition}
\newtheorem{remark}[theorem]{Remark}
\newtheorem{corollary}[theorem]{Corollary}
\newcommand{\Om}{\Omega}
\newcommand{\be}{\begin{equation}}
\newcommand{\ee}{\end{equation}}
\newcommand{\R}{\mathbb R}
\newcommand{\N}{\mathbb N}
\newcommand{\pa}{\partial}
\newcommand{\na}{\nabla}
\newcommand{\dv}{\mathrm{div}\;}
\newcommand{\Supp}{\operatorname{Supp}}
\newcommand{\PV}{\operatorname{PV}}
\newcommand{\du}{\dot u_0}
\newcommand{\Td}{T_\delta}
\newcommand{\eps}{\epsilon}
\newcommand{\flap}{(-\Delta)^{1/2}}
\newcommand{\Si}{\Sigma}
\newcommand{\la}{\lambda}
\newcommand{\La}{\Lambda}
\newcommand{\ds}{\displaystyle}
\def\div{\hbox{div  }}
\title{On shape optimization problems \\ involving  the fractional laplacian} 
\author{\footnote{DMA/CNRS, Ecole Normale Sup\'erieure, 45 rue d'Ulm, 75005
   Paris, FRANCE, tel: +33 1 44 32 20 58, fax: +33 1 44 32 20 80.} Anne-Laure Dalibard 
and 
 \footnote{IMJ and University Paris 7, 175 rue du Chevaleret, 75013 Paris. } David G\'erard-Varet }
\date{}
\begin{document}
\bibliographystyle{amsplain}
\maketitle

\begin{abstract}
Our concern is  the computation of optimal shapes in problems involving $\flap$. We focus on the energy $J(\Omega)$ associated to the solution $u_\Omega$ of the basic Dirichlet problem $\flap u_\Omega = 1$ in $\Omega$, $ u = 0$ in  $\Omega^c$. We show that regular minimizers $\Omega$ of this energy  under a volume constraint are disks. Our proof goes through the explicit computation of the shape derivative (that seems to be completely new in the fractional context), and a refined adaptation of the moving plane method. 
  
\end{abstract}

{{\em Keywords:} \small fractional laplacian, shape optimization, shape derivative, moving plane method} 
 
\section{Introduction}

This article is concerned with shape optimization problems involving the fractional laplacian. 
The typical example we have in mind comes from  the following system:  
\be\label{EL}
\begin{aligned}
(-\Delta)^{1/2} u=1 \text{ on } \Om,\\
u=0\text{ on }\Om^c,
\end{aligned}
\ee
set for a bounded open set $\Omega$ of $\R^2$.  We wish to find the minimizers of  the associated energy  
\be\label{def:J}
J(\Om):= \inf_{ \substack{v\in H^{1/2}(\R^2),\\ v|_{\Omega^c}=0}}\left(\frac{1}{2}\left\langle (-\Delta)^{1/2}v,v\right\rangle_{H^{-1/2},H^{1/2}} - \int_{\R^2}v\right).
\ee
among  open sets $\Omega$ with prescribed measure (and a smoothness assumption). Beyond this specific example,  we wish to develop mathematical tools for  shape optimization in the context of fractional operators. 

\medskip
 Our original motivation comes from a drag reduction problem in microfluidics. Recent experiments, carried on liquids in microchannels, have suggested that drag is substantially lowered when the wall of the channel is water-repellent and rough \cite{Lauga,Vinogradova1}. The idea is 
 that the liquid sticks to the bumps of the roughness, but may slip over its humps, allowing for less friction at the boundary. Mathematically, one can consider Stokes equations for the liquid 
 (variable $(x,z) = (x_1, x_2, z)$, velocity field $u = (v,w) = (v_1,v_2,w)$):
 \begin{equation} \label{Stokes}
 -\Delta u + \na p = 0, \quad \div u = 0, \quad z > 0, 
\end{equation}
set above a flat surface $\{ z = 0 \}$. This flat surface replaces the rough hydrophobic one, and is composed of areas on which the fluid satisfies alternately perfect slip and no-slip boundary conditions. In the simplest models, these areas of perfect slip and no-slip form a periodic pattern, corresponding to a periodic pattern of humps and bumps. That means that the impermeability condition $w=0$ at $\{z= 0\}$ is completed with mixed Dirichlet/Navier conditions: 
\begin{equation} \label{DirNav}
 \pa_z v = 0 \:  \mbox{ in } \Omega, \quad \: v = 0   \mbox{ in } \Omega^c 
\end{equation}
 where $\Om\subset \R^2$ corresponds to the zones of perfect slip and $\Om^c$ to the zones of no-slip. The whole issue is to design $\Om$ so that the energy $J(\Om)$ associated with this problem is minimal for a fixed fraction of slip area. 
 Unfortunately, this optimization problem (Stokes operator, periodic pattern) is still out of reach. That is why we start with the simpler equations \eqref{EL}  (still difficult, and interesting on their own!). 
 Note that they can be seen as a scalar version of   \eqref{Stokes}-\eqref{DirNav}, replacing the Stokes operator by the Laplacian, and the Navier by the Neumann condition. Using the classical characterization of $\flap$ as the Dirichlet-to-Neumann operator leads to  system  \eqref{EL} and energy \eqref{def:J}. Again, we stress  that the  methods  developed in our paper may be useful in more elaborate contexts.

 \medskip
If the fractional laplacian in \eqref{EL}  is replaced by a standard laplacian (which leads to the classical Dirichlet energy), this problem is well-known and described in detail in the book \cite{HenrotPierre} by Henrot and Pierre (see also \cite{Serrin}). In this case, one can show that a smooth  domain $\Om$ minimizing the Dirichlet energy under the constraint $|\Om|=1$ is a disc. A standard proof of this result has two main steps:
\begin{enumerate}
 \item One computes the shape derivative associated with the Dirichlet energy for the laplacian. This leads to the following result: if $\Om$ is a minimizer of the Dirichlet energy and $u_\Om$ is the solution of the associated Euler-Lagrange equation, then $\pa_n u$ is constant on $\pa \Om$.

\item One analyzes an  overdetermined problem. More precisely, the idea is to prove that if there exists a function $u$ solving
$$
\begin{aligned}
-\Delta u=1\text{ in } \Om,\\
u=0\text{ on } \pa \Om,\\
\pa_n u=c\text{ on } \pa \Om,
\end{aligned}
$$
then all the connected components of $\Om$ are discs. This second step is achieved thanks to the moving plane method.
\end{enumerate}
Our goal in this article is to develop the same approach in the context of the fractional laplacian, showing radial symmetry of any smooth minimizer $\Omega$ of \eqref{def:J}. Accordingly, we start with the computation of the shape derivative.
\begin{theorem}
 Let $f\in \mathcal C^\infty(\R^2,\R)$, and let
$$
J_f(\Om):= \inf_{ \substack{v\in H^{1/2}(\R^2),\\ v|_{\Omega^c}=0}}\left(\frac{1}{2}\left\langle (-\Delta)^{1/2}v,v\right\rangle_{H^{-1/2},H^{1/2}} - \int_{\R^2}fv\right).
$$
Let $\zeta\in \mathcal C^\infty_0(\R^2, \R^2)$, and let $(\phi_t)_{t\in \R}$ be the flow associated with $\zeta$, namely
$
\dot \phi_t= \zeta(\phi_t),\quad \phi_0=\mathrm{Id}.
$

Let $\Om$ be an open set with $\mathcal C^\infty$ boundary, and let $u_{\Om,f}$ be the unique minimizer of $J_f(\Om)$, 
$$
\begin{aligned}
 (-\Delta)^{1/2} u_{\Om, f}=f\quad \text{in } \Om,\\
 u_{\Om, f}=0\quad \text{in } \Om^c.
\end{aligned}
$$

Then, denoting by $n(x)$ the outward pointing normal vector to $\pa \Om$,
\begin{enumerate}
 \item For all $x\in \pa \Om$, the limit
$$
\lim_{y\to x, y\in \Om}\frac{u_{\Om,f}(y)}{|(y-x)\cdot n(x)|^{1/2}}
$$
exists; we henceforth denote it by  $\pa_n^{1/2} u_{\Om,f}(x)$.

\item The function  $\pa_n^{1/2} u_{\Om,f}$ belongs to $\mathcal C^1(\pa \Om)$.

\item There exists an explicit constant $C_0$, which does not depend on $\Om$, such that
$$
\frac{d}{dt}J_f(\phi_t(\Om))_{|t=0}=C_0\int_{\pa \Om} (\pa_n^{1/2} u_{\Om,f})^2 \zeta\cdot n \:d\sigma.
$$
\end{enumerate}

\label{prop:der-forme-qcq}
\end{theorem}
This theorem implies easily 
 \begin{corollary}
Assume that there exists  an open set $\Om\subset \R^2$ such that $\Om$ has $\mathcal C^\infty$ regularity, $|\Om|=1$, and 
$$
J(\Om)= \inf_{\substack{\Om'\subset \R^2,\\|\Om'|=1}}J(\Om').
$$
Let $u_\Om$ be the solution of the associated Euler-Lagrange equation \eqref{EL}, and let $n(x)$ be the outward pointing normal at $x\in \pa \Om$. Then, $\pa_n^{1/2} u_\Om$ exists, and there exists a constant $c_0\geq 0$ such that
$$
\pa_n^{1/2} u_\Om(x)=c_0\quad\forall x\in \pa \Om.
$$

\label{thm:derivee-forme}
\end{corollary}
Taking into account this extra condition on the fractional normal derivative, we can then determine the minimizing domain:
\begin{theorem}
Let $\Om\subset \R^2$ be a $\mathcal C^\infty$ open set such that the system 
\be
\label{overdetermined}
\begin{aligned}
(-\Delta)^{1/2} u=1\text{ in }\Om,\\
u=0\text{ in } \Om^c,\\
 \pa_n^{1/2} u=c_0\text{ on } \pa \Om.
\end{aligned}
\ee
 has at least one solution. Assume that $\Om$ is connected. Then $\Om$ is a disc.
\label{thm:hyp-mobiles}

\end{theorem}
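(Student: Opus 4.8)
The plan is to carry the classical Serrin argument over to the nonlocal setting, replacing each local ingredient (weak/strong maximum principle, Hopf lemma, corner lemma) by its antisymmetric counterpart for $(-\Delta)^{1/2}$, and using the overdetermined condition exactly at the end to break the symmetry degeneracy.

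\emph{Preliminaries and set-up.} First I would record that $u>0$ in $\Omega$: since $(-\Delta)^{1/2}u=1>0$ in $\Omega$ and $u=0$ in $\Omega^c$, the maximum principle gives $u\geq 0$, and its strong form gives $u>0$ in the connected open set $\Omega$ (in particular $c>0$, since $c=0$ together with the Hopf lemma below would force $u\equiv 0$). By the boundary regularity theory for the fractional Dirichlet problem on $\mathcal C^\infty$ domains one has $u\in \mathcal C^{1/2}(\R^2)\cap \mathcal C^\infty(\Omega)$, and, using $\partial_n^{1/2}u\equiv c$, the expansion $u(x)=c\,\delta(x)^{1/2}+o(\delta(x)^{1/2})$ with $\delta(x):=\mathrm{dist}(x,\partial\Omega)$, locally uniformly along $\partial\Omega$. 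Now fix a unit vector $e$; after a rotation take $e=e_1$. For $\lambda\in\R$ set $H_\lambda=\{x_1>\lambda\}$, $T_\lambda=\{x_1=\lambda\}$, let $x^\lambda$ be the reflection of $x$ across $T_\lambda$, and $\Sigma_\lambda=\Omega\cap H_\lambda$. Since $\Omega$ is bounded and smooth, for $\lambda$ slightly below $b:=\sup_\Omega x_1$ the reflected cap $R_\lambda(\Sigma_\lambda)$ lies in $\Omega$; let $\lambda^*$ be the infimum of the $\lambda$ for which $R_\mu(\Sigma_\mu)\subset\Omega$ and $\partial\Omega$ is nowhere orthogonal to $T_\mu$, for all $\mu\in(\lambda,b)$. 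By compactness, at $\lambda^*$ one of Serrin's two alternatives occurs: either (a) $\partial\Omega$ and $\partial R_{\lambda^*}(\Sigma_{\lambda^*})$ are internally tangent at some $p\in\partial\Omega\setminus T_{\lambda^*}$, or (b) $T_{\lambda^*}$ meets $\partial\Omega$ orthogonally at some $q$.

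\emph{Monotonicity and symmetry in the direction $e$.} For $\lambda\in(\lambda^*,b)$ consider $w_\lambda(x):=u(x^\lambda)-u(x)$, which is antisymmetric across $T_\lambda$. Since $(-\Delta)^{1/2}$ commutes with rigid motions and $R_\lambda(\Sigma_\lambda)\subset\Omega$, we get $(-\Delta)^{1/2}w_\lambda=0$ in $\Sigma_\lambda$; moreover $w_\lambda\geq 0$ on $(H_\lambda\cup T_\lambda)\setminus\Sigma_\lambda$, because there $u(x)=0\leq u(x^\lambda)$ (and $w_\lambda=0$ on $T_\lambda$). The antisymmetric maximum principle — obtained by splitting the singular integral over $H_\lambda$ and $H_\lambda^c$, using $w_\lambda(y^\lambda)=-w_\lambda(y)$ and $|x-y|\leq|x-y^\lambda|$ for $x,y\in H_\lambda$ to evaluate it at a would-be negative minimum — then yields $w_\lambda\geq 0$ in $H_\lambda$, and its strong form gives $w_\lambda>0$ in $\Sigma_\lambda$ (the cap is a proper nonempty set, so $w_\lambda\not\equiv 0$). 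Letting $\lambda\downarrow\lambda^*$ gives $w_{\lambda^*}\geq 0$ in $H_{\lambda^*}$; it remains to upgrade this to $w_{\lambda^*}\equiv 0$. In case (a), assume $w_{\lambda^*}>0$ in $\Sigma_{\lambda^*}$: an antisymmetric Hopf-type lemma (again from the sign of the split singular integral, at the natural $\delta^{1/2}$ rate) gives $\liminf_{x\to p} w_{\lambda^*}(x)/\delta(x)^{1/2}>0$. But near $p$ the inclusion $R_{\lambda^*}(\Sigma_{\lambda^*})\subset\Omega$ forces $\delta(x^{\lambda^*})\leq\delta(x)$, so $u(x^{\lambda^*})/\delta(x)^{1/2}\leq u(x^{\lambda^*})/\delta(x^{\lambda^*})^{1/2}\to c$ while $u(x)/\delta(x)^{1/2}\to c$; hence $\limsup_{x\to p} w_{\lambda^*}(x)/\delta(x)^{1/2}\leq 0$, a contradiction. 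Thus $w_{\lambda^*}\equiv 0$, i.e. $u$ is symmetric across $T_{\lambda^*}$. Case (b) is treated by the same comparison but with the second-order (corner) version of the Hopf lemma at $q\in\partial\Omega\cap T_{\lambda^*}$, again using $\partial_n^{1/2}u\equiv c$, and again yields $w_{\lambda^*}\equiv 0$.

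\emph{Conclusion, and the main obstacle.} Symmetry of $u$ across $T_{\lambda^*(e)}$ for every direction $e$, together with the strict inequality $w_\lambda>0$ in $\Sigma_\lambda$ for $\lambda>\lambda^*$ (which transfers to strict monotonicity $\partial_e u<0$ off the symmetry hyperplane), shows $u$ has a single maximum point $x_0$, lying on every hyperplane of symmetry, and that $u$ is symmetric about every hyperplane through $x_0$; hence $u$, and therefore $\Omega=\{u>0\}$, is radial about $x_0$, so the connected bounded smooth set $\Omega$ is a disc. The genuinely new difficulty relative to the classical Serrin argument is nonlocality: $w_\lambda$ solves the equation only on the cap $\Sigma_\lambda$, not on all of $H_\lambda$, so every maximum-principle and Hopf-lemma step must exploit antisymmetry to absorb the reflected half-space, and the Hopf lemma must be stated at the $\delta^{1/2}$ rate so as to interact correctly with the overdetermined condition. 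Making the orthogonality case (b) rigorous in this nonlocal, $\delta^{1/2}$-regularity setting is expected to be the most technical point.
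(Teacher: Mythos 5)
Your proposal follows the same moving-plane skeleton as the paper (critical position $\La$, dichotomy between internal tangency and orthogonal contact, contradictory upper/lower bounds at the touching point using $\pa_n^{1/2}u\equiv c$), but implements the nonlocal steps by a genuinely different route: you work entirely in $\R^2$ with antisymmetric maximum principles and Hopf lemmas for $\flap$ obtained by folding the singular integral across $H_\la$, whereas the paper lifts everything to the harmonic extension in $\R^3_+$ and uses only classical (local) maximum principles, Serrin's corner lemma for the $3$d Laplacian, and explicit barriers $\rho^\beta\cos(\theta/2)$, $\rho^\beta\cos(\theta/2)\cos\phi$ in toroidal coordinates around a tangent circle. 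Your route buys a real simplification in Step 1: the folded kernel $|x-y|^{-3}-|x-y^\la|^{-3}\ge 0$ plus the strictly signed zeroth-order term let you conclude $w_\la\ge 0$ directly for every admissible $\la$, with no open--closed sliding argument on a set $\mathcal L$ and no monotonicity condition $\pa_1 u>0$ (which the paper must propagate carefully through the $r^{-1/2}$ blow-up of $\na u$ at $\pa\Om$). The paper's route buys self-containedness: every comparison function is written down and verified.

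Two concrete problems remain. First, in your configuration-1 upper bound the inequality $\delta(x^{\la^*})\le\delta(x)$ is backwards: the inclusion $R_{\la^*}(\Si_{\la^*})\subset\Om$ forces a point of the reflected cap to be \emph{farther} from $\pa\Om$ than from $\pa R_{\la^*}(\Si_{\la^*})$, i.e.\ $\delta(x^{\la^*})\ge d(x,\pa\Si_{\la^*})=\delta(x)$ near the touching point, so your chain of inequalities gives no upper bound at all. The correct argument (Serrin's, and the paper's) is that the inward normals to $\pa\Om$ and to $\pa R_{\la^*}(\Si_{\la^*})$ coincide at the tangency point, so along that common normal the two leading terms $c\,r^{1/2}$ cancel exactly and $w=O(r)=o(r^{1/2})$; this is a one-line fix but it is not the argument you wrote. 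Second, and more seriously, configuration 2 is not proved: you invoke ``the second-order (corner) version of the Hopf lemma'' at the $\delta^{1/2}$ rate, but in the nonlocal/half-regularity setting this is precisely the hard part of the theorem. The paper needs its Lemma \ref{lemmex1x3} (a bilinear lower bound $W\ge c\,(x_1-\La)z$ for the $3$d extension, itself proved via Serrin's corner lemma), the barrier $\rho^\beta\cos(\theta/2)\cos\phi$ to convert this into $w(Q+t\vec\nu)\ge c_\beta t^{1+\beta}$, and a careful Taylor expansion of $r(x)^{1/2}-r(\check x)^{1/2}$ to get the matching upper bound $O(t^2)$; none of this has a ready-made antisymmetric $2$d analogue, and you would also have to identify the correct exponents on both sides. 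Relatedly, even in configuration 1 the Hopf lower bound at rate $\delta^{1/2}$ does not follow from ``the sign of the split singular integral'' alone (that gives only the strong maximum principle); it requires an explicit antisymmetric subsolution with $\delta^{1/2}$ boundary behavior, the counterpart of the paper's $W_\beta$. Note that a rate $\delta^\beta$ with any $\beta\in(1/2,1)$ suffices, since the upper bound is in fact $O(\delta)$.
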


Of course, Theorem \ref{prop:der-forme-qcq} and Corollary \ref{thm:hyp-mobiles} imply immediately the following:
\begin{corollary}
 Let $\Om\subset \R^2$ be a $\mathcal C^\infty$ connected open set such that $|\Om|=1$, and 
$$
J(\Om)= \inf_{\substack{\Om' \subset \R^2,\\|\Om'|=1}}J(\Om').
$$
Then $\Om$ is a disc.
\label{cor:disc}

\end{corollary}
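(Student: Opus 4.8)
The plan for the corollary itself is short, since it is a direct composition of the two theorems. Let $\Om \subset \R^2$ be $\mathcal C^\infty$, connected, with $|\Om| = 1$ and $J(\Om) = \inf_{|U| = 1} J(U)$, and let $u_\Om$ solve \eqref{EL}. By Theorem \ref{thm:derivee-forme}, the normal half-derivative $\pa_n^{1/2} u_\Om(x)$ exists for every $x \in \pa\Om$ and is equal to one and the same constant $c \ge 0$ there; in other words $u_\Om$ is a solution of the overdetermined system \eqref{overdetermined}. Since $\Om$ is connected and of class $\mathcal C^\infty$, Theorem \ref{thm:hyp-mobiles} applies verbatim and forces $\Om$ to be a disc. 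The only thing to check is that the output of the first theorem is exactly the input required by the second, which it is; no extra regularity or nondegeneracy has to be produced by hand.

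It is the two theorems that carry the weight, so let me indicate how I would establish them. For Theorem \ref{thm:derivee-forme} the plan is to differentiate $J(\Om_t)$ along a smooth deformation $\Om_t = (\Id + tV)(\Om)$, treating the constraint $|\Om_t| = 1$ with a Lagrange multiplier. The feature that makes the fractional case genuinely different is the boundary behaviour of $u_\Om$: on a smooth domain the Dirichlet problem for $\flap$ produces a solution that vanishes on $\pa\Om$ like $\mathrm{dist}(\cdot,\pa\Om)^{1/2}$, not linearly, which is precisely why the natural boundary observable is $\pa_n^{1/2}u_\Om$ rather than $\pa_n u_\Om$. Carrying out the differentiation (most cleanly via the Caffarelli--Silvestre harmonic extension, where the problem becomes a local mixed boundary value problem on $\R^2 \times (0,\infty)$ and one can integrate by parts) should yield a shape derivative whose boundary term is proportional to $\int_{\pa\Om} (\pa_n^{1/2} u_\Om)^2 \,(V\cdot n)$; optimality under the volume constraint then forces $(\pa_n^{1/2}u_\Om)^2$, hence $\pa_n^{1/2}u_\Om$ itself since it is nonnegative, to be constant on $\pa\Om$.

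For Theorem \ref{thm:hyp-mobiles} the plan is the moving plane method. Fix a direction $e$, slide the hyperplane $\{x\cdot e = \la\}$ inward from $\la$ large, reflect across it the portion of $\Om$ already swept and the corresponding values of $u_\Om$, and compare the reflected solution with $u_\Om$ on the swept cap; the standard argument then shows the critical value of $\la$ yields a hyperplane of symmetry, and letting $e$ range over all directions gives radial symmetry, the connectedness of $\Om$ ruling out annuli or disjoint unions of discs. The main obstacle, and the reason a "refined" adaptation is needed, is the nonlocality of $\flap$: unlike for $-\Delta$, the fractional Laplacian of the difference between $u_\Om$ and its reflection is not governed by the values on the cap alone but picks up a nonlocal contribution from the whole swept region and beyond, so the usual maximum-principle step breaks down and must be replaced by an argument that keeps the sign of these tail terms under control --- for instance by exploiting the antisymmetry of the reflected configuration, or by running the comparison at the level of the extension problem, where it becomes a genuine maximum principle on a half-space.
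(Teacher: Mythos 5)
Your first paragraph is exactly the paper's proof: the corollary is the immediate composition of Theorem \ref{thm:derivee-forme} (which produces a solution of the overdetermined system \eqref{overdetermined} with $\pa_n^{1/2}u_\Om$ constant) with Theorem \ref{thm:hyp-mobiles} (which forces a connected smooth $\Om$ admitting such a solution to be a disc), and no extra hypotheses need to be verified. The additional sketches of the two theorems are not required for the corollary; I only note that your suggestion that the shape derivative can be obtained by integrating by parts in the extension variable is precisely what the paper says is unavailable --- its actual computation goes through the integral kernel of $\flap$, truncations $\chi_k$ near $\pa\Om$, and the $r^{1/2}$-asymptotics of $u_\Om$.
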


Let us make a few comments on our results. 
As regards Theorem  \ref{prop:der-forme-qcq}, we stress that, due to the non-locality of the fractional laplacian, the shape derivative of $J_f$ is hard to compute.  In the case of the classical laplacian, it is obtained through integration by parts, which are completely unavailable in the present context. The idea is to bypass the nonlocality by using an asymptotic expansion  of $u_{\Om,f}(x)$ as the distance between $x$ and  $\pa \Omega$ goes to zero. Such asymptotic expansion follows from  general results of \cite{CoDauDu}, on the solutions of linear elliptic systems in domains with cracks. We insist that  the proof of the theorem  neither uses  scalar arguments, nor Fourier-based calculations. In particular, we believe that its interet goes much beyond Corollary \ref{cor:disc} (that is finding the minimizer of  \eqref{def:J}). It is likely that it can be adapted to vectorial settings, or functionals with non-constant coefficients.
Notice also that Theorem \ref{prop:der-forme-qcq} and Corollary \ref{thm:derivee-forme}  do not require the connectedness of $\Omega$.  For our special case $f=1$, they imply  that the value of the fractional normal derivative is the same on all the boundaries of the connected components of $\Om$.

As regards Theorem \ref{thm:hyp-mobiles}, it is deduced from an adaptation of the moving plane method to our fractional setting. Again, this is not straightforward, as the standard  method relies heavily on the maximum principle and Hopf's Lemma, which are essentially local tools. To overcome our non-local problem,  we use  appropriate three-dimensional extensions of $u$, to which we can apply maximum principle methods in a classical context. Note however that we need to assume that $\Om$ is connected: this hypothesis is precisely due to the nonlocality of the fractional laplacian.

\medskip
We conclude this introduction by a brief review of related results. 
Let us first mention that the condition $\pa_n^{1/2} u=c_0$ appears in other problems related to the fractional laplacian. In \cite{CRS}, Caffarelli, Roquejoffre and Sire consider a minimization problem for another energy related to the fractional laplacian, and they prove that minimizers satisfy the condition $\pa_n^{1/2} u=c_0$ on $\pa \{u=0\}$. However, we emphasize that the issues of the present paper and those of \cite{CRS} are rather different. The problem addressed in \cite{CRS} is essentially a free-boundary problem (i.e. $\Om$ is not given, but is defined as $\{u>0\}$), and therefore  questions such as the regularity and the non-degeneracy of $u$, and the regularity of the free boundary, are highly non trivial and are at the core of the paper \cite{CRS}. Here, our goal is not to investigate these questions, but rather to derive information on the shape of $\Om$, assuming {\it a priori} regularity. As mentioned before, we rely on article \cite{CoDauDu} by Costabel and co-authors, which provides   asymptotic expansions for solutions of linear elliptic equations near cracks. As they derive accurate asymptotics, based on pseudo-differential calculus, they need  the domain to be $C^\infty$. In our context, only cruder information is needed (broadly, we need the first term in the expansion, and tangential regularity). Apart from these asymptotic expansions, the proofs of Theorem \ref{prop:der-forme-qcq} and Theorem \ref{thm:hyp-mobiles} use very little information on the regularity of $\pa\Om$ (existence of tangent and normal vectors, boundedness and regularity of the curvature). Hence, it is likely that our $C^\infty$ regularity requirement can be lowered.

As regards our adaptation of  the moving plane method, it relates to other results on the proof of radial symmetry for minimizers of nonlocal functionals: see for instance \cite{LuZh} on local Riesz potentials
$$ u(x) = \int_\Omega \frac{1}{|x-y|^{N-1}} $$
or  \cite{CaTa} on the radial symmetry of solutions of nonlinear equations  involving $A^{1/2}$, where  $A$ is the Dirichlet laplacian of a ball in $\R^n$. Note that in these two papers, ``local" maximum principles are still available, which helps. Further references (notably to article \cite{BiLoWa}) will be provided in due course.

Let us eventually point out that more direct proofs of the final Corollary \ref{cor:disc} might be available.
For instance, in the case of the classical laplacian, one way to proceed is to consider the auxiliary problem: {\em minimize}
$$  E(u)   \: := \: \frac{1}{2} \int_{\R^2}| \na u |^2 \: - \: \int_{\R^2} u  $$
{\em under the measure constraint:  $| \{ u > 0 \}| = 1\}$}.  Crudely, any minimizer $u$ of $E$ provides a solution $\Omega_u = \{ u > 0 \}$ of the shape optimization  problem, and vice versa. We refer once again to \cite{HenrotPierre} for  rigourous statements. In particular, showing that any minimizer of $E$ is radial shows that any minimizing domain is radial (without {\it a priori} regularity assumption).  

In the case of the fractional laplacian, a close context has been recently investigated by Lopes and Maris in \cite{LP}. Their result is the following:
\begin{proposition}
Let $s\in (0,1)$ and assume that $F,G: \R\to \R$ are such that $u\mapsto F(u)$, $u\mapsto  G(u)$ map $\dot H^s(\R^d)$ into $L^1(\R^d)$. Assume that $u\in \dot H^s(\R^d)$ is a solution of the minimization problem
$$\begin{aligned}
\text{Minimize }E(u):=\int_{\R^d} |\xi|^{2s} |\hat u(\xi)|^2\:d\xi + \int_{\R^d} F(u) \quad 
\text{under the constraint }\int_{\R^d} G(u)=\lambda.   
  \end{aligned}
$$
Then $u$ is radially symmetric.\label{prop:LM}
\end{proposition}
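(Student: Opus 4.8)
The plan is to run the reflection argument of Lopes and Maris, whose engine is the Caffarelli--Silvestre extension. For $s\in(0,1)$ one has, with an explicit constant $\kappa_s>0$, the identity $\int_{\R^d}|\xi|^{2s}|\hat u(\xi)|^2\,d\xi=\kappa_s\inf\{\int_{\R^{d+1}_+}y^{1-2s}|\nabla w|^2\,dx\,dy:\ w(\cdot,0)=u\}$, the infimum being attained only at the $s$-harmonic extension $w=w_u$ of $u$, which is a weak solution of $\dv(y^{1-2s}\nabla w)=0$ in $\R^{d+1}_+$. Writing $\mathcal E(w):=\kappa_s\int_{\R^{d+1}_+}y^{1-2s}|\nabla w|^2+\int_{\R^d}F(w(\cdot,0))$, we have $E(u)=\mathcal E(w_u)$ and $E(u)\le\mathcal E(w)$ for every extension $w$ of $u$. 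The gain is that $\mathcal E$ is \emph{local}, hence well behaved under the reflection surgery below, whereas the non-local constraint $\int_{\R^d}G(u)=\lambda$ is insensitive to that surgery.

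Fix a unit vector $e\in\R^d$. First I would choose $t=t(e)\in\R$ with $\int_{\{x\cdot e>t\}}G(u)\,dx=\lambda/2$; such a $t$ exists by the intermediate value theorem, since $t\mapsto\int_{\{x\cdot e>t\}}G(u)$ is continuous (because $G(u)\in L^1(\R^d)$) with limits $\lambda$ and $0$ at $\mp\infty$ (here one uses $\lambda\neq0$). Let $\sigma$ be the reflection of $\R^d$ across $\{x\cdot e=t\}$, acting on $\R^{d+1}_+$ with the $y$-variable fixed. Define $u_1$ to equal $u$ on $\{x\cdot e>t\}$ and $u\circ\sigma$ on $\{x\cdot e<t\}$, define $u_2$ symmetrically from the other half, and let $w_1,w_2$ be produced from $w_u$ by the same surgery. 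Then each $w_i$ is an extension (in general not the optimal one) of $u_i$, we have $u_i\in\dot H^s(\R^d)$, each $u_i$ is $\sigma$-invariant, and the splitting $\int_{\R^d}G(u_i)=2\int_{\{\pm(x\cdot e-t)>0\}}G(u)\,dx=\lambda$ makes $u_i$ admissible. A change of variables gives $\kappa_s\int y^{1-2s}(|\nabla w_1|^2+|\nabla w_2|^2)=2\kappa_s\int y^{1-2s}|\nabla w_u|^2$ and, likewise, $\int F(u_1)+\int F(u_2)=2\int F(u)$, whence $E(u_1)+E(u_2)\le\mathcal E(w_1)+\mathcal E(w_2)=2\mathcal E(w_u)=2E(u)$. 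As $u$ minimises and $u_1,u_2$ are admissible, $E(u_i)\ge E(u)$; thus all inequalities are equalities, so $E(u_1)=E(u_2)=E(u)$ and $\mathcal E(w_i)=E(u_i)$.

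The crux is to upgrade these equalities to the $\sigma$-symmetry of $u$. From $\mathcal E(w_i)=E(u_i)$ and the strict convexity of $w\mapsto\int y^{1-2s}|\nabla w|^2$ over extensions of $u_i$, each surgered $w_i$ must actually coincide with \emph{the} $s$-harmonic extension $w_{u_i}$, hence be a weak solution of $\dv(y^{1-2s}\nabla w_i)=0$ throughout $\R^{d+1}_+$, in particular across the vertical hyperplane $\{x\cdot e=t\}$. But $w_i$ is even under $\sigma$, and by the Neumann reflection principle a $\sigma$-even function solving that equation on each open half solves it across exactly when its conormal derivative vanishes on $\{x\cdot e=t\}\times(0,\infty)$; since $w_i=w_u$ on one side, this forces $\pa_{(x\cdot e)}w_u=0$ there. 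Consequently $v:=w_u-w_u\circ\sigma$ solves $\dv(y^{1-2s}\nabla v)=0$ in $\R^{d+1}_+$ with $v=0$ and $\pa_{(x\cdot e)}v=0$ — hence with all first derivatives vanishing — on $\{x\cdot e=t\}\times(0,\infty)$. Since this operator is uniformly elliptic with real-analytic coefficients in $\{y>0\}$ and the hypersurface is non-characteristic there, Holmgren's theorem together with interior analyticity (equivalently, unique continuation for the Caffarelli--Silvestre operator) yields $v\equiv0$, so $w_u$, and hence $u$, is $\sigma$-symmetric. I expect this unique-continuation step to be the main obstacle: it is what genuinely forces one to pass to the extension rather than manipulate the bare non-local functional, and it is where the degeneracy of the weight $y^{1-2s}$ must be handled.

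It remains to assemble the directions. The previous step yields, for every $e\in S^{d-1}$, a hyperplane $H_e$ of symmetry of $u$. Picking linearly independent $e_1,\dots,e_d$ fixes a point $x_0=\bigcap_iH_{e_i}$; for any other $e$, if $H_e$ failed to pass through $x_0$ then composing the reflection across $H_e$ with suitable reflections across the $H_{e_i}$ would produce a nontrivial translation under which $u$ is invariant, which is impossible for a nonzero element of $\dot H^s(\R^d)$ (no nonzero periodic function lies in $\dot H^s$). Hence all $H_e$ pass through $x_0$, and a function symmetric across a hyperplane through $x_0$ in every direction is radial about $x_0$; this proves the claim (the trivial case $u\equiv\text{const}$ aside).
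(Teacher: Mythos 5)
This proposition is not proved in the paper at all: it is quoted from Lopes and Maris \cite{LP}, and the authors explicitly remark that the techniques of \cite{LP} ``rely on the Fourier transform''. So there is no in-paper proof to compare yours against; what you have written is an independent argument for the quoted result, and it takes a genuinely different route. You localize the $\dot H^s$ energy via the Caffarelli--Silvestre extension, perform the Lopes reflection surgery on the extended function, and settle the equality case by Neumann reflection plus Holmgren/analytic unique continuation in $\{y>0\}$, whereas the original proof works directly on the Fourier side. Your route is closer in spirit to the tools this paper uses elsewhere (harmonic extension, reflection, Hopf-type arguments), and the step you single out as the crux does go through: away from $\{y=0\}$ the operator $\Delta+\frac{1-2s}{y}\pa_y$ is uniformly elliptic with real-analytic coefficients, the reflecting hyperplane is non-characteristic, and the vanishing of the Cauchy data of $v=w_u-w_u\circ\sigma$ there follows correctly from the evenness of the surgered minimizer. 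The chain $E(u_1)+E(u_2)\le \mathcal E(w_1)+\mathcal E(w_2)=2E(u)$ combined with admissibility of $u_1,u_2$ is also sound.

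Two points need tightening. First, the concluding ``assemble the directions'' step: the composition of reflections across two \emph{non-parallel} hyperplanes is a rotation, not a translation, so ``composing the reflection across $H_e$ with suitable reflections across the $H_{e_i}$'' does not produce a translation for generic linearly independent $e_i$. The fix is to take $e_1,\dots,e_d$ orthonormal: the product of the corresponding reflections is then the point reflection about $x_0$, conjugating the reflection across $H_e=\{x\cdot e=t\}$ by it gives the reflection across the parallel hyperplane $\{x\cdot e=2x_0\cdot e-t\}$, and if $H_e\not\ni x_0$ these two distinct parallel mirrors compose to a nontrivial translation; then $G(u)$ is periodic and integrable, hence zero a.e., contradicting $\lambda\neq 0$. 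Second, the case $\lambda=0$ is genuinely outside your argument (the bisection of the constraint fails), and the proposition as stated does not exclude it; you should either add that hypothesis or supply a separate device for it.
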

Looking closely at their proof, it seems that their arguments can be extended as such to the functionals
$$ F(u) :=  u,\quad  G(u) :=  \mathbf 1_{u>0} $$
although such $F$ and $G$ do not map $\dot H^{1/2}(\R^2)$ into $L^1(\R^2)$. This is likely to yield the radial symmetry of the minimizing domain for our shape functional $J$ (without {\it a priori} regularity assumption). See  Remark \ref{rmk:rings} for further discussion. 
\medskip

The plan of our paper is the following: Section \ref{sec:preliminaries} collects more or less standard results on the fractional laplacian, which will be used throughout the article. Special attention is paid to regularity properties of solutions of \eqref{EL}, that we deduce from regularity results for the Laplace equations in domains with cracks. Section \ref{sec:derivee-forme} is devoted to the proof of Theorem \ref{prop:der-forme-qcq} and Corollary \ref{thm:derivee-forme}. Finally,  Section \ref{sec:hyp-mobiles} contains the proof of Theorem \ref{thm:hyp-mobiles}.

\section{Preliminaries} \label{sec:preliminaries}

\subsection{Reminders on the fractional laplacian}
We remind here some basic knowledge about $\flap$, see for instance  \cite{Silvestre}. We start with 
\begin{definition}
For any $f \in {\cal S}(\R^n)$, one defines $\flap f$ through the identity
$$ \widehat{\flap f}(\xi) = |\xi| \hat{f}(\xi).  $$ 
\end{definition}
Note that $g=\flap f$ does not belong to  ${\cal S}(\R^n)$ because of the singularity of $|\xi|$ at $0$. Nevertheless, it is $C^\infty$ and satisfies for all $k \in \N$:
$$ \sup_{x\in \R^n} (1 + |x|^{n+1}) |g^{(k)}(x) | < +\infty $$
This allows for a definition of $\flap$ over a large subspace of ${\cal S}'(\R^n)$, by duality. We shall only retain 
\begin{proposition}
$\flap$ extends into a continuous operator from $H^{1/2}(\R^n)$ to $H^{-1/2}(\R^n)$.
\end{proposition}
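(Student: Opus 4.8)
The plan is to pass to the Fourier side, where $\flap$ acts simply as multiplication by $|\xi|$, and to use the Fourier characterization of the Sobolev norms, $\|g\|_{H^s(\R^n)}^2 = \int_{\R^n}(1+|\xi|^2)^s|\hat g(\xi)|^2\,d\xi$. First I would take $f\in\mathcal S(\R^n)$, so that $\flap f$ is well defined by the Definition above and $\widehat{\flap f}(\xi)=|\xi|\hat f(\xi)$. Then the desired bound reduces to an elementary pointwise multiplier estimate:
$$
\|\flap f\|_{H^{-1/2}(\R^n)}^2 = \int_{\R^n}(1+|\xi|^2)^{-1/2}\,|\xi|^2\,|\hat f(\xi)|^2\,d\xi \;\le\; \int_{\R^n}(1+|\xi|^2)^{1/2}\,|\hat f(\xi)|^2\,d\xi \;=\; \|f\|_{H^{1/2}(\R^n)}^2,
$$
where the inequality follows from $|\xi|^2\le 1+|\xi|^2$, i.e. $(1+|\xi|^2)^{-1/2}|\xi|^2\le(1+|\xi|^2)^{1/2}$. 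In particular the left-hand integral is finite, so $\flap f\in H^{-1/2}(\R^n)$, and we have shown that $\flap:\mathcal S(\R^n)\to H^{-1/2}(\R^n)$ is linear and bounded of norm at most $1$ with respect to the $H^{1/2}$ norm.

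Next I would invoke the standard density/completeness argument: $\mathcal S(\R^n)$ is dense in $H^{1/2}(\R^n)$, and $H^{-1/2}(\R^n)$ is a Banach space, so the bounded linear map $\flap$ defined on the dense subspace $\mathcal S(\R^n)$ extends uniquely to a bounded linear map $H^{1/2}(\R^n)\to H^{-1/2}(\R^n)$ with the same norm. If desired, one can also check a posteriori that this extension agrees with the duality-based definition of $\flap$ mentioned above, by testing against Schwartz functions and passing to the limit; this is not required for the statement.

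I do not expect any genuine obstacle here: the whole point is that conjugating by the Fourier transform turns $\flap$ into a multiplication operator and the Sobolev spaces into weighted $L^2$ spaces, after which continuity is just the bounded multiplier inequality $(1+|\xi|^2)^{-1/2}|\xi|^2\lesssim(1+|\xi|^2)^{1/2}$, followed by a routine density extension.
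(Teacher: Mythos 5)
Your argument is correct and is precisely the computation the paper has in mind when it says the proposition is ``clear from the definition'': the Fourier multiplier bound $(1+|\xi|^2)^{-1/2}|\xi|^2\le(1+|\xi|^2)^{1/2}$ followed by density of $\mathcal S(\R^n)$ in $H^{1/2}(\R^n)$. No issues.
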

This is clear from the definition. 

\medskip
It is also well-known that $\flap$ can be identified with the Dirichlet-to-Neumann operator, in the following sense (writing $(x,z) \in \R^n \times \R$ the elements of $\R^{n+1}$ ):
\begin{theorem}
Let $u \in H^{1/2}(\R^n)$. One has 
$ \flap u = - \pa_z U\vert_{z=0}$, where $U$ is the unique solution in 
$$\dot{H}^1(\R^{n+1}_+) \: := \: \left\{ U \in L^2_{loc}(\R^{n+1}_+) , \quad \na U  \in L^2(\R^{n+1}_+) \right\}$$
of 
\begin{equation*}
-\Delta_{x,z} U = 0 \: \mbox{ in }\R^{n+1}_+, \quad U\vert_{z=0} = u.
\end{equation*}
\end{theorem}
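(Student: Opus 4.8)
The plan is to diagonalize the problem by the Fourier transform in the tangential variable $x\in\R^n$, reducing the harmonic extension to a one‑parameter family of ODEs in $z$. First I would settle existence and uniqueness of $U$ in $\dot H^1(\R^{n+1}_+)$. Existence follows by minimizing the Dirichlet energy $\int_{\R^{n+1}_+}|\na U|^2$ over the affine set of $U\in\dot H^1(\R^{n+1}_+)$ with trace $u$ on $\{z=0\}$; this set is nonempty because the trace operator $\dot H^1(\R^{n+1}_+)\to\dot H^{1/2}(\R^n)$ is onto and $u\in H^{1/2}(\R^n)\subset\dot H^{1/2}(\R^n)$, and the minimizer is harmonic by the Euler--Lagrange equation. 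Uniqueness follows because the difference of two solutions is harmonic with zero trace and finite Dirichlet energy, hence vanishes.

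Next I would compute $U$ explicitly. Applying the Fourier transform in $x$, the equation $-\Delta_{x,z}U=0$ becomes $-\pa_z^2\hat U(\xi,z)+|\xi|^2\hat U(\xi,z)=0$ with $\hat U(\xi,0)=\hat u(\xi)$, whose only solution compatible with $\na U\in L^2(\R^{n+1}_+)$ is the decaying branch
\[ \hat U(\xi,z)=e^{-|\xi|z}\,\hat u(\xi). \]
A direct computation together with Plancherel's theorem gives
\[ \int_{\R^{n+1}_+}|\na U|^2 = \int_{\R^n}\!\!\int_0^{\infty} 2|\xi|^2 e^{-2|\xi|z}|\hat u(\xi)|^2\,dz\,d\xi = \int_{\R^n}|\xi|\,|\hat u(\xi)|^2\,d\xi \le \|u\|_{H^{1/2}(\R^n)}^2 < \infty, \]
which confirms $U\in\dot H^1(\R^{n+1}_+)$ and justifies a posteriori the choice of the decaying branch. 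Finally, differentiating in $z$ at $z=0$ and invoking the definition of $\flap$,
\[ \widehat{-\pa_z U|_{z=0}}(\xi) = |\xi|\,\hat u(\xi) = \widehat{\flap u}(\xi), \]
so $-\pa_z U|_{z=0}=\flap u$, which is the claim.

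The main obstacle is not the formal computation but the functional‑analytic bookkeeping in the homogeneous space $\dot H^1(\R^{n+1}_+)$: one must make precise the trace theorem and its surjectivity onto $\dot H^{1/2}(\R^n)$, handle the Dirichlet minimization modulo constants, and — most delicately — justify that $-\pa_z U|_{z=0}$ is a well‑defined element of $H^{-1/2}(\R^n)$ and that taking this trace commutes with the Fourier transform in $x$. This requires knowing that $U$ is smooth for $z>0$ and that $\pa_z U$ extends up to $\{z=0\}$ in the right topology, which can be read off from the Poisson‑kernel representation $U(\cdot,z)=P_z*u$ of the half‑space together with standard estimates on $P_z$. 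I would dispatch these points by quoting the classical results (as in the reference cited just before the statement) rather than reproving them.
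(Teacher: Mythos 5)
Your argument is correct and is the standard Fourier-side proof of this classical identification; the paper does not prove this theorem at all, but states it as a reminder with a reference (\cite{Silvestre}), so there is no competing argument to compare against. One remark on the point you flag as most delicate: you do not need $\pa_z U$ to extend pointwise up to $\{z=0\}$, because the paper (in the sentence immediately following the theorem) defines $\pa_z U\vert_{z=0}\in H^{-1/2}(\R^n)$ weakly via the Green identity $\langle \pa_z U\vert_{z=0},\gamma\phi\rangle=-\int_{\R^{n+1}_+}\na U\cdot\na\phi$; with that definition, your Plancherel computation of $\int_{\R^{n+1}_+}\na U\cdot\na\phi$ using $\hat U(\xi,z)=e^{-|\xi|z}\hat u(\xi)$ directly yields $\langle \pa_z U\vert_{z=0},\gamma\phi\rangle=-\int_{\R^n}|\xi|\hat u(\xi)\overline{\widehat{\gamma\phi}(\xi)}\,d\xi$, i.e.\ $-\flap u$, with no need for boundary regularity of $\pa_z U$. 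The remaining ingredients you invoke (surjectivity of the trace onto $\dot H^{1/2}$, and the Liouville-type argument showing that a harmonic function in $\dot H^1(\R^{n+1}_+)$ with zero trace vanishes, e.g.\ by odd reflection) are indeed standard and appropriately quoted rather than reproved.
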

Let us remind that the normal derivative $\pa_z U\vert_{z=0} \in H^{-1/2}(\R^n)$ has to be understood in a weak sense: for all $\phi \in H^1(\R^{n+1}_+)$, 
$$ < \pa_z   U\vert_{z=0}, \gamma \phi > \: = \: - \int_{\R^{n+1}_+} \na U \cdot \na \phi $$
where $\gamma$ is the trace operator (onto $H^{1/2}(\R^n)$). It coincides with the standard derivative whenever $U$ is smooth. 

\medskip
We end this reminder with a formula for the fractional laplacian: 
\begin{theorem}
Let $f$ satisfying $\int_{\R^n} \frac{|f(x)|}{1 + |x|^{n+1}} dx < +\infty$, with regularity $C^{1,\eps}$, $\eps > 0$, over an open set $\Omega$. Then, $\flap f$ is continuous over $\Omega$, and for all $x \in \Omega$, one has 
 \begin{eqnarray}
(-\Delta)^{1/2} f(x)&=&C_1 \PV\int_{\R^n}\frac{f(x)-f(y)}{|x-y|^{n+1}}\:dy\\
&=&C_1\int_{\R^n} \frac{f(x)-f(y)- \na f(x)\cdot (x-y)\mathbf 1_{|x-y|\leq C}}{|x-y|^{n+1}}\:dy.
\label{int-lapla-frac}
\end{eqnarray}
for some $C_1 = C_1(n)$ and any $C > 0$. 
\end{theorem}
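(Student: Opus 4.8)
The plan is to establish \eqref{int-lapla-frac} first for $f\in\mathcal S(\R^n)$ by a Fourier computation, and then to reach the general case by cutting $f$ near a point and mollifying. For Schwartz data I would start from the fact that the finite-part distribution $\mathrm{p.f.}\,|x|^{-n-1}\in\mathcal S'(\R^n)$, acting on $\varphi\in\mathcal S$ by $\langle\mathrm{p.f.}\,|x|^{-n-1},\varphi\rangle=\int_{\R^n}\bigl(\varphi(x)-\varphi(0)\bigr)|x|^{-n-1}\,dx$ (the integral converging since the linear Taylor term of $\varphi$ at the origin is odd, hence integrates to zero over annuli), has Fourier transform $C_1^{-1}|\xi|$ for an explicit constant $C_1=C_1(n)$ — this comes from the analytic continuation of the Riesz-potential identity $\widehat{|x|^{-\alpha}}=c_{n,\alpha}|\xi|^{\alpha-n}$, $0<\alpha<n$, to $\alpha=n+1$. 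For $f\in\mathcal S$ one then has $\flap f=\mathcal F^{-1}(|\xi|\hat f)=C_1\,(\mathrm{p.f.}\,|x|^{-n-1})\ast f$, and unwinding the convolution gives the principal-value form of \eqref{int-lapla-frac}. The second equality follows from the first: by $C^{1,\eps}$ regularity the integrand $\bigl(f(x)-f(y)-\na f(x)\cdot(x-y)\mathbf 1_{|x-y|\le C}\bigr)|x-y|^{-n-1}$ is $O(|x-y|^{\eps-n})$ near the diagonal, hence absolutely integrable, while $\int_{\rho<|x-y|\le C}\na f(x)\cdot(x-y)|x-y|^{-n-1}\,dy=0$ for every $\rho>0$ by oddness; the same oddness gives independence of $C>0$.

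\emph{Localization and mollification.} Fix $x_0\in\Om$, pick $r>0$ with $B(x_0,2r)\subset\Om$ and $\chi\in\mathcal C^\infty_0(B(x_0,2r))$ with $\chi\equiv1$ on $B(x_0,r)$, and write $f=f_1+f_2$ with $f_1:=\chi f$ and $f_2:=(1-\chi)f$; by linearity of the duality definition, $\flap f=\flap f_1+\flap f_2$. Since $f_2$ vanishes on $B(x_0,r)$, its support lies at positive distance from every point of that ball, so $g(x):=-C_1\int_{\R^n}f_2(y)|x-y|^{-n-1}\,dy$ is well defined and smooth on $B(x_0,r)$ (convergence at infinity being guaranteed by $\int_{\R^n}|f|(1+|x|^{n+1})^{-1}\,dx<\infty$); testing $\flap f_2$ against $\phi\in\mathcal C^\infty_0(B(x_0,r))$, using the Schwartz-case formula for $\phi$ and Fubini (licit because $\mathrm{supp}\,f_2$ and $\mathrm{supp}\,\phi$ are disjoint) identifies $\flap f_2=g$ on $B(x_0,r)$, and $g$ coincides there with the right-hand side of \eqref{int-lapla-frac} applied to $f_2$ (whose value and gradient vanish on $B(x_0,r)$). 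For the compactly supported $C^{1,\eps}$ function $f_1$, I would mollify, $f_1^\delta:=f_1\ast\rho_\delta\in\mathcal C^\infty_0\subset\mathcal S$, apply the Schwartz-case formula, and let $\delta\to0$ in
\[
\flap f_1^\delta(x)=C_1\int_{\R^n}\frac{f_1^\delta(x)-f_1^\delta(y)-\na f_1^\delta(x)\cdot(x-y)\mathbf 1_{|x-y|\le C}}{|x-y|^{n+1}}\,dy .
\]
The bound $|f_1^\delta(x)-f_1^\delta(y)-\na f_1^\delta(x)\cdot(x-y)|\le[\na f_1]_{C^{0,\eps}}|x-y|^{1+\eps}$ on $\{|x-y|\le C\}$ (the Hölder seminorm not increasing under mollification), together with $\|f_1^\delta\|_\infty\le\|f_1\|_\infty$, dominates the integrand by a $\delta$-independent, locally-in-$x$ uniform, integrable function; since $\flap f_1^\delta\to\flap f_1$ in $\mathcal S'$ while $\na f_1^\delta\to\na f_1$ locally uniformly, dominated convergence yields the absolutely-convergent form of \eqref{int-lapla-frac} for $f_1$, and the principal-value form follows from the oddness argument above. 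Summing the $f_1$ and $f_2$ contributions proves \eqref{int-lapla-frac} on $B(x_0,r)$, hence on $\Om$ since $x_0$ is arbitrary; continuity of $\flap f$ on $\Om$ then follows by repeating the dominated-convergence argument with the local modulus of continuity of $\na f$ in place of $[\na f_1]_{C^{0,\eps}}$ (together with the smoothness of $g$).

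\emph{Main obstacle.} The delicate part is the localization step. Because $f$ is only locally $C^{1,\eps}$ and has merely the weak decay $\int_{\R^n}|f|(1+|x|^{n+1})^{-1}\,dx<\infty$, one must simultaneously realize the far piece $\flap f_2$ as a genuine smooth function on the region of regularity — even though $f_2$ is nowhere near Schwartz — and control the mollification limit in the singular integral uniformly in $\delta$; it is exactly in this last control that the strict positivity of $\eps$ is used, since for $\eps=0$ the integrand in \eqref{int-lapla-frac} need not be integrable across the diagonal and the principal value cannot in general be removed.
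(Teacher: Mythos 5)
Your proposal is correct. Note that the paper itself offers no proof of this statement: it appears in the preliminaries as a ``reminder'' of standard facts about $\flap$, with the reader referred to the cited work of Silvestre, so there is no argument of the authors' to compare yours against. Your two-step scheme (Fourier identification of $\flap$ with convolution against the regularized kernel $\mathrm{p.f.}\,|x|^{-n-1}$ on $\mathcal S(\R^n)$, then localization $f=\chi f+(1-\chi)f$ with the far piece handled by duality/Fubini and the near piece by mollification and dominated convergence) is exactly the standard route in the literature, and each step as you describe it is sound: the principal-value convergence of the finite-part pairing via oddness of the linear Taylor term, the non-increase of the H\"older seminorm under mollification, and the use of the decay hypothesis $\int|f|(1+|x|^{n+1})^{-1}<\infty$ both to define $\flap f$ by duality and to make the far-field integral converge. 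Two small points a final write-up should pin down: choose the cutoff $\chi$ supported in a compact subset of $\Omega$ so that $\chi f$ is globally (not just locally) $C^{1,\eps}$, and track the sign/normalization in the identity $\widehat{\mathrm{p.f.}\,|x|^{-n-1}}=c\,|\xi|$ so that the constant $C_1$ comes out positive; neither affects the validity of the argument.
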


\subsection{The Dirichlet problem for $\flap$. Regularity properties (2d case).}
\label{ssec:regul-flap}
In view of system \eqref{EL}, a key point in our analysis is to know the behavior near the boundary of solutions to the following fractional Dirichlet problem:
\begin{equation} \label{ELbis}
\begin{aligned}
(-\Delta)^{1/2} u=f \text{ on } \Om,\\
u=0\text{ on }\Om^c,
\end{aligned}
\end{equation}
where $\Omega$ is a smooth open set of $\R^2$ and $f \in  C^\infty(\R^2)$. Note that in system \eqref{ELbis}, we prescribe the value of $u$ not only at $\pa \Omega$, but in the whole $\Omega^c$. This is reminiscent of the non-local character of $\flap$: remind that $u = U\vert_{z=0}$, where $U$ satisfies the (3d) local problem
\begin{equation} \label{3dEL}
\begin{aligned}
-\Delta U = 0 \: \mbox{ in } \: z > 0, \\
\pa_z U = -f  \: \mbox{ over }  \: \Omega \times \{ 0 \}, \quad U = 0   \: \mbox{ over }  \: \Omega^c \times \{0 \},
\end{aligned}
\end{equation}
whose mixed Robin/Dirichlet boundary condition must be specified over the whole plane $\{ z = 0\}$.

\medskip
We were not able to find direct references for regularity properties of problem \eqref{ELbis}, although the $C^{0,1/2}$ regularity of $u$ and the existence of $\pa_n^{1/2} u$ are evoked in \cite{Bogdan, CRS, DSR}. In particular, we could not collect information on transverse and tangential regularity of the solution $u$ near $\pa \Omega \times \{ 0\}$. We shall use results for the Laplace equation in domains with cracks,  in the following way. Let $\eta \in C^\infty_c(\R^3)$, odd in $z$, with $\theta(x,z) = z$ for $x$ in a neighborhood of $\bar \Omega$ and $|z| \le 1$. Then, $V(x,z) := U(x,z) + \eta(x,z) f(x)$ satisfies
\begin{equation} \label{3dELbis}
\begin{aligned}
-\Delta V = F \: \mbox{ in } \: z > 0, \\
\pa_z V = 0  \: \mbox{ over }  \: \Omega \times \{ 0 \}, \quad U = 0   \: \mbox{ over }  \: \Omega^c \times \{ 0 \},
\end{aligned}
\end{equation}
where $F := -\Delta_{x,z}(\eta  f)$ is smooth, odd in $z$, and compactly supported in $\R^3$. We then extend $V$ to $\{z < 0 \}$ by the formula $V(x,z) := -V(x,-z)$, $z < 0$. In this way, we obtain the system 
\begin{equation} \label{3dELter}
\begin{aligned}
-\Delta V = F \: \mbox{ in } \: \R^3 \setminus\left(\Omega \times \{0\}\right) \\
\pa_z V = 0  \: \mbox{ at }  \: \left(\Omega \times \{0\}\right)^{\pm}
\end{aligned}
\end{equation}
which corresponds to a Laplace equation outside a  ``crack'' $\Omega \times \{ 0 \}$ with Neumann boundary condition on each side of the crack. We can now use regularity results for the laplacian in singular domains, such as those of \cite{CoDauDu}. First, note that $V$ is $C^\infty$ away from $\Omega \times \{0\}$, by standard elliptic regularity. Let now  $\Gamma$  be a connected component of $\pa \Omega$, and $\varphi$ a truncation function such that $\varphi = 1$ in a neighborhood of $\Gamma$, with $\Supp \varphi \cap (\pa\Om\setminus\Gamma) = \emptyset$. Then, $\varphi V$ still satisfies a system of type \eqref{3dELter}, with $\Omega$ replaced by $\Supp \varphi$, and $F$ by $F - [\Delta, \varphi ] V$ (which is still smooth). We can then apply  \cite[Theorem A.4.3]{CoDauDu}, which leads to the following
\begin{theorem} \label{the:Dauge}
Let $\Gamma$ be a connected component of $\pa\Om$. 
We denote by $(r,\theta)$ polar coordinates in the planes normal to $\Gamma$ and centered on $\Gamma$, and by $s$ the arc-length on $\Gamma$,   so that
$$
\R^3\setminus( (\Supp \varphi\cap\Om)\times\{0\})=\{(s,r,\theta),\ s\in (0,L),\ r>0,\ \theta\in(-\pi,\pi)\}.
$$
Then, the solution $V$ of \eqref{3dELter} has the following asymptotic expansion, as $r \rightarrow 0$: for any integer $K \ge 0$, 
\begin{equation} \label{asymptoticexp}
V = \sum_{k=0}^K r^{1/2+k}  \Psi^k(s,\theta) \: + \: U_{reg,K} \: + \: U_{rem, K}  
\end{equation}
where: 
\begin{itemize}
\item the coefficients $ \Psi^k$ are regular over $[0,L] \times [-\pi,\pi]$,     
\item $U_{reg,K}$ is regular over $\R^3$,
\item the remainder $U_{rem,K}$ satisfies $\pa^\beta  U_{rem,K} = o(r^{K-|\beta|+1/2})$ for all $\beta \in \N^3$.   
\end{itemize}\label{thm:dev-u}
\end{theorem}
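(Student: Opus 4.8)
The statement is in essence an application of the edge-singularity theory for second-order elliptic operators, so the plan is to reduce \eqref{3dELter} near $\Gamma$ to a model problem and then invoke \cite[Theorem A.4.3]{CoDauDu}. By the discussion preceding the statement, $V$ is already known to be $C^\infty$ away from $\Om\times\{0\}$, and $\varphi V$ solves a system of the same type \eqref{3dELter}, with $\Om$ replaced by $\Supp\varphi$ and $F$ replaced by the still-smooth, still-compactly-supported function $F-[\Delta,\varphi]V$; hence the only remaining issue is the behavior in an arbitrarily small tubular neighborhood of the smooth closed curve $\Gamma$. Introducing the coordinates $(s,r,\theta)$ of the statement (arc-length $s$ along $\Gamma$, polar coordinates $(r,\theta)$ in the normal planes, with the crack sitting at $\theta=\pm\pi$), the Laplacian becomes a second-order elliptic operator with $C^\infty$ coefficients on $\{s\in\R/L\Z,\ r>0,\ \theta\in(-\pi,\pi)\}$, equipped with homogeneous Neumann conditions at $\theta=\pm\pi$ and a smooth right-hand side. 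This is precisely the framework of the edge-asymptotics results going back to Kondrat'ev and developed in \cite{CoDauDu}.

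Next I would identify the model operator in the cross-section, obtained by freezing $s$ and dropping the $\pa_s$ terms: it is the plane Laplacian on the slit plane $\{r>0,\ \theta\in(-\pi,\pi)\}$ with homogeneous Neumann conditions on the two lips $\theta=\pm\pi$. Separation of variables in the form $r^\mu\psi(\theta)$ gives $\psi(\theta)=\cos(\mu\theta)$ with $\mu\in\N$ — these are the regular (polynomial) exponents — or $\psi(\theta)=\sin(\mu\theta)$ with $\cos(\mu\pi)=0$, i.e. $\mu=1/2+k$, $k\in\N$; the latter are the genuine singular exponents, the leading one being $r^{1/2}\sin(\theta/2)$ (and, consistently, odd in $z$). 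Since no singular exponent is an integer, no logarithmic terms arise, and \cite[Theorem A.4.3]{CoDauDu} applies: for every integer $K\ge 0$ one obtains a splitting of $\varphi V$ into a part that is $C^\infty$ up to $\{r=0\}$, a finite sum $\sum_{k=0}^{K} r^{1/2+k}\Psi^k(s,\theta)$ whose coefficients $\Psi^k$ inherit $C^\infty$ dependence on $(s,\theta)$ from the smoothness of $\Gamma$ and of the straightened coefficients, and a remainder $U_{rem,K}$ with $\pa^\beta U_{rem,K}=o(r^{K-|\beta|+1/2})$ for all $\beta\in\N^3$. This is exactly \eqref{asymptoticexp}. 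The oddness in $z$ of $F$ and $V$, which transfers to every term of the expansion, additionally forces $U_{reg,K}$ to be odd in $z$ and eliminates the even angular modes $r^k\cos(k\theta)$, but this refinement is not needed for the statement.

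The main obstacle here is bookkeeping rather than conceptual: one must verify carefully that, after the change of variables $(s,r,\theta)$, our equation genuinely satisfies the hypotheses of \cite[Theorem A.4.3]{CoDauDu} — smoothness of the edge and of the coefficients, the precise nature of the Neumann conditions on the two faces of the crack, and the decay/compatibility assumptions on the right-hand side — and, crucially, that the fact that all singular exponents $1/2+k$ differ from one another (and from the integer exponents) by integers does not destroy the polyhomogeneous structure of the expansion. This ``crossing of exponents'' is exactly the delicate situation that Theorem A.4.3 is designed to accommodate; relying on its precise statement, the expansion comes out in pure powers of $r$, with no logarithms, which is what is used in the sequel.
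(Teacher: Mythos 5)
Your proposal is correct and follows essentially the same route as the paper: localize $\varphi V$ near $\Gamma$, observe it still solves a crack problem of type \eqref{3dELter} with smooth right-hand side $F-[\Delta,\varphi]V$, and invoke \cite[Theorem A.4.3]{CoDauDu}. Your additional computation of the Neumann angular modes on the slit cross-section (exponents $\mu=1/2+k$ for the singular part, integers for the regular part, hence no logarithms) is exactly the verification that the cited edge-asymptotics theorem applies, which the paper leaves implicit.
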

 Setting 
 $$\mbox{ $\psi^k(s) := \Psi^k(s,\pi)$, $u_{rem,K}(x) := U_{rem,K}(x,0^+)$, $u_{reg,K}(x) := U_{reg,K}(x,0^+)$,}$$
 we can get back to the solution $u$ of \eqref{ELbis} and obtain the asymptotic expansion
 \begin{equation} \label{asymptoticexp2}
u = \sum_{k=0}^K r^{1/2+k} \psi^k(s) \: + \: u_{reg,K} \: + \: u_{rem, K}.
\end{equation}
 Such formulas will be at the core of the next sections.

  \section{Shape derivative of the energy $J(\Om)$}
  
  \label{sec:derivee-forme}
  
  This section is devoted to the proof of Theorem \ref{prop:der-forme-qcq} and Corollary  \ref{thm:derivee-forme}. The proof of Theorem \ref{prop:der-forme-qcq} is rather technical, although it follows a simple intuition. Therefore let us first explain how Corollary \ref{thm:derivee-forme} is derived.
  
  Assume that Theorem \ref{prop:der-forme-qcq} holds. Consider the application
  $$
  \mathcal J:\zeta\in \mathcal C^1_b(\R^2, \R^2)\mapsto J((I+ \zeta) \Om),
  $$
  where 
  $$
  \mathcal C^1_b(\R^2, \R^2)= \mathcal C^1\cap W^{1,\infty}(\R^2, \R^2).
  $$
  We recall that $\mathcal C^1_b$ equipped with the norm $\|\cdot \|_{W^{1,\infty}}$ is a Banach space.

  We first claim that $\mathcal J$ is differentiable at $\zeta=0$. This follows from the differentiability of the application 
  $$
  \zeta\in \mathcal C^1_b(\R^2, \R^2)\mapsto  v_\zeta \in H^{1/2}(\R^2),
  $$
  where $v_\zeta= u_\zeta\circ (I+\zeta)$ and $u_\zeta= u_{(I+\zeta)\Om}$ is the solution of the Euler-Lagrange equation associated with  $(I+\zeta) \Om$. The proof goes along the same lines as the one of Lemma \ref{lem:regu0} below. Furthermore, the variational formulation of the Euler-Lagrange equation implies  (see formula \eqref{eq:J})
  $$
  J((I+\zeta)\Om)=-\frac 12 \int_{\R^2} v_\zeta \det (I+\na\zeta).
  $$
  The differentiability of $\mathcal J$ follows.
  
  Using Theorem \ref{prop:der-forme-qcq}, we then identify the differential of $\mathcal J$ at $\zeta=0$. Indeed, if $(\phi_t)_{t\in \R}$ is the flow associated with $\zeta\in \mathcal C^\infty_0(\R^2)$, 
  $$
  \left(\frac{d}{dt}J(\phi_t(\Om))\right)_{|t=0}
  $$
  is the G\^ateaux derivative of $\mathcal J$ at point 0 in the direction $\zeta$. We infer that
  $$
  d\mathcal J(0) \zeta= C_0 \int_{\pa \Om}\zeta\cdot n (\pa_n^{1/2} u_\Om)^2\:d\sigma
  $$
  for all $\zeta\in \mathcal C^\infty_0(\R^2,\R^2)$, and by density for all $\zeta \in \mathcal C^1_b(\R^2, \R^2)$.

  Now, assume that $\Om$ is a bounded domain with $\mathcal C^\infty$ boundary, which minimizes $J$ under the constraint $|\Om|=1$. In other words, $\zeta=0$ is a minimizer of $\mathcal J(\zeta)$ in the Banach space $\mathcal C^1_b$ under the constraint $V(\zeta):=|(I+\zeta)(\Om)|=1$. According to the theorem of Lagrange multipliers, there exists $\lambda\in \R$ such that
  \be\label{lagrange-multi}
  \forall \zeta \in \mathcal C^1_b(\R^2,\R^2),\quad \left(d\mathcal J(0)+\lambda dV(0)\right)\zeta=0.
  \ee
  It is proved in \cite{HenrotPierre} that
  $$
  dV(0)\zeta= \int_{\pa \Om} \zeta\cdot n \;d\sigma.
  $$
  Thus \eqref{lagrange-multi} becomes
  $$
  \exists \lambda\in \R,\ \forall \zeta \in \mathcal C^1_b(\R^2,\R^2),\quad \int_{\pa \Om}( \zeta\cdot n) \left((\pa_n^{1/2} u_{\Om})^2 +\frac{\lambda}{C_0}\right)d\sigma=0.
  $$
  Since $\zeta$ is arbitrary, we infer that $\pa_n^{1/2} u_\Om$ is constant on $\Om$. Moreover, since $u_\Om\geq 0$ on $ \Om$ by the maximum principle, the constant is positive. This completes the proof of Corollary \ref{thm:derivee-forme}.

  \smallskip

  We now turn to the proof of Theorem \ref{prop:der-forme-qcq}.
  In the case of the classical laplacian, the shape derivative of the Dirichlet energy is well-known and is proved in the book by Henrot et Pierre \cite{HenrotPierre}. Let us recall the main steps of the derivation, which will be useful in the case of the fractional laplacian. Let
  $$
  I_f(\Om):=\inf_{u\in H^1_0(\Om)}\left(\frac{1}{2}\int_\Om |\na u|^2 - \int_\Om f u\right),
  $$
  For all $\zeta\in \mathcal C^1\cap W^{1,\infty}(\R^2)$, consider the flow $\phi_t$ associated with $\zeta$. Then for all $t\in \R$, $\phi_t$ is a diffeomorphism of $\R^2$. We recall the following properties, which hold for all $t\in \R$:
  \be\label{phit}
  \begin{aligned}
  \dot \phi_0=\zeta,\quad \left|\det(\na \phi_t(x))\right|=:j(t,x)=\exp\left(\int_0^t(\dv \zeta)(\phi_s(x))\:ds\right)  \\
  \frac{d\phi_t^{-1}}{dt} _{|t=0}=-\zeta,\quad \left|\det(\na \phi_t^{-1})\right|=j(-t,x).
  \end{aligned}
  \ee
  The last line merely expresses the fact that $\phi_t^{-1}= \phi_{-t}$, for all $t\in \R$.
  
  For $t\geq 0$, let $\Om_t=\phi_t(\Om)$, and let $w_t$ be the solution of the associated Euler-Lagrange equation, namely
  $$\begin{aligned}
  -\Delta w_t=f\quad\text{in }\Om_t,\\
  w_t=0\text{ on }\pa \Om_t.   
    \end{aligned}
  $$
  Eventually, we define $z_t:= w_t\circ \phi_t$. 
  
  It is proved in \cite{HenrotPierre} that
  $$
  \left(\frac{d}{dt}I_f(\Om_t)\right)_{|t=0}=-\frac{1}{2}\int_{\pa\Om} (\zeta \cdot n )(\pa_n w_0)^2 d\sigma.
  $$
  Indeed, since $w_t$ solves the Euler-Lagrange equation, we have
  $$
  I_f(\Om_t)=-\frac{1}{2}\int_{\Om_t}f w_t= -\frac{1}{2}\int_{\Om}z_t(y) f\circ\phi_t(y) j(t, y)\:dy.
  $$
  Differentiating $z_t=w_t\circ \phi_t$ with respect to $t$, we obtain
  $$
  \dot z_t=\dot w_t + \dot \phi_t \cdot \na w_t,
  $$
  and thus in particular
  $$
  \dot w_0 + \zeta \cdot n\; \pa_n w_0=0\text{ on }\pa\Om.
  $$
  The Euler-Lagrange equation yields
  $$
  -\Delta \dot w_0=0\quad\text{in }\Om.
  $$
  Gathering all the terms and using the fact that $w_{0|\pa \Om}=0$, we deduce that
  \begin{eqnarray*}
  \left(\frac{d}{dt}I_f(\Om_t)\right)_{|t=0}&=&-\frac{1}{2}\int_\Om( f(\dot w_0 + \zeta \cdot \na w_0 )+ (\zeta \cdot \na f) w_0+f \dv \zeta  w_0)\\
  &=&-\frac{1}{2}\int_\Om (-\dot w_0 \Delta w_0 +\dv (\zeta fw_0) )\\
  &=&\frac{1}{2}\left(\int_\Om w_0 \Delta \dot w_0+\int_{\pa\Om} \dot w_0\pa_n w_0 d\sigma\right)\\
  &=&-\frac{1}{2}\int_{\pa\Om} (\zeta \cdot n) (\pa_n w_0)^2 d\sigma.
  \end{eqnarray*}
  Therefore the shape derivative of $I_f$ is similar to the one of $J_f$, the fractional derivative being merely replaced by a classical derivative.

  \vskip2mm
  
  Unfortunately, the proof in the case of the classical laplacian can only partially be transposed to the fractional laplacian. Indeed, several integration by parts play a crucial role in the computation, and cannot be used in the framework of the fractional laplacian.
  
   We use therefore a different method to estimate $d J_f(\Om_t)/dt$. The main steps of the proof are as follows:
  \begin{enumerate}
   \item As above, we introduce $u_t=u_{\Om_t,f}$ and $v_t=u_t\circ \phi_t$. We derive regularity properties and asymptotic expansions for $v_t$, from which we deduce a decomposition of $u_0$ and $\dot u_0$.
  
  \item In order to avoid the singularities of $\dot u_0$ near $\pa\Om$, we introduce a truncation function $\chi_k$ supported  in $\Om$, and vanishing in the vicinity of the boundary. Using the integral form of the fractional laplacian, we then derive an integral formula for an approximation of $d J_f(\Om_t)/dt$ involving $u_0$, $\dot u_0$ and $\chi_k$.

  \item Keeping only the leading order terms in the decomposition of $u_0$ and $\dot u_0$, we obtain an expression of $d J_f(\Om_t)/dt$ in terms of $\pa_n^{1/2} u_0$, and we prove that this expression is independent of the choice of the truncation function $\chi_k$.
  
  \item We then evaluate the contributions of the remainder terms to the integral formula, and we prove that they all vanish as $k\to \infty$.

  \end{enumerate}
  
 Most of the technicalities are contained in steps 3 and 4. However, some more or less formal calculations - performed at the end of step 2 - lead relatively easily to the desired result. 
  Before tackling the core of the proof, let us introduce some notation: 
  \begin{itemize}
   \item We denote by $\Gamma_1,\cdots, \Gamma_N$ the connected components of $\pa\Om$, and we parametrize each $\Gamma_i$ by its arc-length $s$. We denote by $L_i$ the length of $ \Gamma_i$.

  \item The number $r\in \R$ stands for the (signed) distance to the boundary of $\Om$. More precisely, $|r|$ is the distance to the boundary, and $r>0$ inside $\Om$, $r<0$ outside $\Om$;
  \item We denote by $U_t$ the three dimensional extension of $u_t$ in the half-space, i.e. the function such that
  $$
  -\Delta U_t=0\text{ on }\R^3_+ ,\quad
  U_t\vert_{z=0}= u_t.
  $$
  The three-dimensional function $V_t$ is then defined by
  $$
  V_t(x,z)= U_t(\phi_t(x),z)\quad x\in \R^2, z\in \R,
  $$
  so that $V_t\vert_{z=0}=v_t$.
  
  \item Derivatives with respect to $t$ are denoted with a dot.
  \end{itemize}

  {\setlength{\leftmargini}{0pt}
  
  \subsection{Regularity of $u_0$ and $\dot u_0$ and expansions}

  We start with the following lemma
  \begin{lemma}
   For all $t$ in a neighbourhood of zero,
  \be\label{reg-u0}
  \begin{aligned}
   v_t\in H^{1/2}(\R^2), \quad \dot v_t\in H^{1/2}(\R^2),\\
   u_t\in H^{1/2}(\R^2), \quad \dot u_t\in H^{-1/2}(\R^2).
  \end{aligned}
  \ee
  
  \label{lem:regu0}
  \end{lemma}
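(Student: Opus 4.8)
The plan is to establish the regularity claims in \eqref{reg-u0} by working with the transported variational formulation, which turns the moving domain $\Om_t$ into the fixed domain $\Om$. First I would write down the variational problem solved by $v_t = u_t \circ \phi_t$: since $u_t \in H^{1/2}(\R^2)$ with $u_t|_{\Om_t^c}=0$ minimizes $\frac12 \langle \flap v, v\rangle - \int f v$, a change of variables $y = \phi_t(x)$ shows that $v_t$ minimizes (over $v \in H^{1/2}(\R^2)$ with $v|_{\Om^c}=0$) a functional of the form $\frac12 a_t(v,v) - \int f\circ\phi_t \, v \, j(t,\cdot)$, where $a_t$ is a bilinear form with smooth coefficients depending on $t$, obtained by pulling back $\langle \flap \cdot,\cdot\rangle = \int_{\R^3_+} \na \cdot \na \cdot$ through the change of variables in the half-space (acting only on the horizontal variables, keeping $z$ fixed, so that $V_t(x,z) = U_t(\phi_t(x),z)$). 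One checks that $a_t$ is coercive and bounded on $\{v \in H^{1/2}, v|_{\Om^c}=0\}$ uniformly for $t$ near $0$, using that $\phi_t$ is a $\mathcal C^\infty$ diffeomorphism with $\na\phi_t, (\na\phi_t)^{-1}$ and $j(t,\cdot)$ bounded. Lax--Milgram then gives $v_t \in H^{1/2}(\R^2)$, which is the first claim, and since $u_t = v_t\circ\phi_t^{-1}$ and $\phi_t^{-1}$ is again a smooth diffeomorphism, $u_t \in H^{1/2}(\R^2)$ follows, which is the third claim.

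Next, for $\dot v_t \in H^{1/2}(\R^2)$: the Euler--Lagrange equation for $v_t$ reads $a_t(v_t, w) = \int f\circ\phi_t\, w\, j(t,\cdot)$ for all admissible test functions $w$. The coefficients of $a_t$ and the right-hand side depend smoothly (in particular $\mathcal C^1$) on $t$ as maps into the relevant spaces of bounded functions, because $t \mapsto \phi_t$ is $\mathcal C^1$ into $\mathcal C^1_b$ with derivative $\zeta\circ\phi_t$, and $j(t,x) = \exp(\int_0^t (\dv\zeta)(\phi_s(x))\,ds)$ is manifestly $\mathcal C^1$ in $t$. Then $t \mapsto v_t$ is $\mathcal C^1$ from a neighbourhood of $0$ into $\{v \in H^{1/2}(\R^2), v|_{\Om^c}=0\}$: differentiate the identity $a_t(v_t,w) = \ell_t(w)$ in $t$ to get that $\dot v_t$ solves $a_t(\dot v_t, w) = \dot\ell_t(w) - \dot a_t(v_t, w)$, whose right-hand side is a bounded linear functional on $H^{1/2}$ with $v|_{\Om^c}=0$ (here $\dot a_t$ denotes the bilinear form obtained by differentiating the coefficients), so Lax--Milgram applies again and gives $\dot v_t \in H^{1/2}(\R^2)$; a standard difference-quotient argument upgrades the formal computation to genuine differentiability. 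This is the second claim. Finally, $\dot u_t \in H^{-1/2}(\R^2)$: here $u_t = v_t \circ \phi_t^{-1} = v_t \circ \phi_{-t}$, so differentiating in $t$ produces $\dot u_t = \dot v_t\circ\phi_{-t} - (\na v_t\circ\phi_{-t})\cdot\zeta\circ\phi_{-t}$ (at $t=0$, $\dot u_0 = \dot v_0 - \zeta\cdot\na v_0$). The term $\dot v_t \circ \phi_{-t} \in H^{1/2} \subset H^{-1/2}$, while $\na v_t \in H^{-1/2}(\R^2)$ and multiplication by the smooth compactly-controlled vector field $\zeta\circ\phi_{-t}$ maps $H^{-1/2}$ into $H^{-1/2}$ (by duality, since it maps $H^{1/2}$ into $H^{1/2}$), hence $\dot u_t \in H^{-1/2}(\R^2)$, as claimed. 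Note one must not expect better than $H^{-1/2}$ here, since $u_t$ only has the square-root behaviour $r^{1/2}$ near $\pa\Om$ coming from the expansion \eqref{asymptoticexp2}, so its transverse derivative is genuinely only in $H^{-1/2}$.

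The main obstacle I anticipate is making rigorous the claim that $a_t$ is a well-defined, bounded, coercive bilinear form on $H^{1/2}$ and depends $\mathcal C^1$ on $t$ — that is, correctly pulling back the nonlocal quadratic form $\langle\flap v,v\rangle$. The cleanest route is exactly the one the paper sets up: use the harmonic extension characterization, so $\langle\flap v,v\rangle = \int_{\R^3_+}|\na V|^2$ where $V|_{z=0}=v$, pull the change of variables $(x,z)\mapsto(\phi_t(x),z)$ through this Dirichlet integral (which only involves $\mathcal C^\infty$ coefficients and a Jacobian bounded above and below), and then take the infimum over extensions to recover a bilinear form on $H^{1/2}$; coercivity and the $\mathcal C^1$ dependence on $t$ are then inherited from the three-dimensional problem, where everything is a standard elliptic estimate. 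A secondary technical point is the difference-quotient argument needed to pass from the formally-differentiated equation to actual differentiability of $t\mapsto v_t$; this is routine once uniform coercivity of $a_t$ in $t$ is in hand, and the excerpt explicitly signals that "the proof goes along the same lines" as this lemma is used elsewhere, so I would keep that part brief.
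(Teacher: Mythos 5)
Your proposal is correct and follows essentially the same route as the paper: both pass to the three-dimensional harmonic extension, pull the problem back to the fixed domain $\Om$ so that $V_t$ solves an elliptic problem with smoothly $t$-dependent coefficients, deduce that $t\mapsto V_t\in H^1$ is $\mathcal C^1$ (the paper via the implicit function theorem, you via differentiating the variational identity plus Lax--Milgram and difference quotients --- equivalent devices), and recover $v_t$ by the trace theorem. The only point requiring slightly more care is the last step: since $v_t$ is only $H^{1/2}$ in $x$, the chain-rule formula $\dot u_0=\dot v_0-\zeta\cdot\nabla v_0$ must itself be justified as a limit of difference quotients in $H^{-1/2}$ (the paper does this by a density argument from $v_0\in H^1$), and your duality bound for the multiplication operator is precisely the estimate that makes that density argument work.
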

  The proof is rather close to the one of Theorem 5.3.2 in \cite{HenrotPierre}. In order to keep the reading as fluent as possible, the details are postponed to the end of the section. The idea is to prove that $V_t$ solves a three-dimensional elliptic equation with smooth coefficients. The implicit function theorem then implies that $t\mapsto V_t\in H^1$ is $\mathcal C^1$ in a neighbourhood of $t=0$. Since $v_t$ is the trace of $V_t$, $t\mapsto v_t\in H^{1/2}$ is also a $\mathcal C^1$ application. Eventually, the chain-rule formula entails that $\dot u_t\in H^{-1/2}$.

  We also derive asymptotic formulas for $V_t$ and $v_t$ in terms of $r$: we rely on the results in the paper by Costabel, Dauge and Duduchava \cite{CoDauDu}, and we use the notations of paragraph \ref{ssec:regul-flap} (see also Theorem \ref{thm:dev-u} of the present paper).
  We claim that there exists $\psi_0, \psi_1, \psi_2\in \mathcal C^1(\pa\Om)$, $u_1, u_2\in W^{1,\infty}(\R^2)$ such that
  \begin{eqnarray}
  \label{dec-u0}u_0&=&\sqrt{r_+}\psi_0(s) + u_1(x),\\ 
  \label{dec-dotu0}\dot u_0&=&\mathbf 1_{r>0}\frac{1}{\sqrt{r}}\psi_1(s) + \sqrt{r_+}\psi_2(s) + u_2(x),
  \end{eqnarray}
  where
  \begin{eqnarray}
  \psi_0(s)=\pa_n^{1/2} u_0(s),\label{def-psi0}\\
  \psi_1(s)=\frac{1}{2}\zeta \cdot n(s) \psi_0(s). \label{def-psi1}
  \end{eqnarray}
  Moreover, there exists $\delta>0$ such that
  \be\label{reg:duO-L1}
  \dot u_t\in L^\infty((-\delta, \delta), L^1(\R^2)).
  \ee
  These decompositions and the regularity result \eqref{reg:duO-L1} will be proved at the end of the section, after the proof of Lemma \ref{lem:regu0}.

  \subsection{An integral formula for an approximation of $d J_f(\Om_t)/dt$}
  
  We first use the Euler-Lagrange equation \eqref{EL} in order to transform the expression defining $J_f(\Om_t)$. Classically, we prove that the unique minimizer $u$ of the energy
  $$
  \frac{1}{2}\left\langle (-\Delta)^{1/2}v,v\right\rangle_{H^{-1/2},H^{1/2}} - \int_{\R^2}fv
  $$
  in the class $\{v\in H^{1/2}(\R^2),\  v_{|\Omega_t^c}=0\}$ satisfies
  $$
  \left\langle (-\Delta)^{1/2}u,v\right\rangle_{H^{-1/2},H^{1/2}} -\int_{\R^2} fv=0 \quad \forall v\in H^{1/2}(\R^2)\text{ s.t. }v_{|\Omega_t^c}=0.
  $$
  Choosing $v\in \mathcal C^\infty_0(\Om_t)$, we infer that $(-\Delta)^{1/2} u=f$ in $\Om$. Since $u_{|\Om_t^c}=0$, we infer that $u=u_{\Om_t, f}=u_t$, and in particular
  $$\langle (-\Delta)^{1/2} u_t, u_t\rangle=\int_{\Om_t} fu_t.$$
  The identity above yields
  $$
  J_f(\Om_t)=- \frac{1}{2}\int_{\R^2}f u_t= - \frac{1}{2}\int_{\Om_t}f u_t.
  $$
  Changing variables in the integral, we obtain
  \be\label{eq:J}
  J_f(\Om_t)= - \frac{1}{2}\int_\Om v_t(y)f\circ \phi_t(y) j(t,y)\:dy.
  \ee
  Since $t\mapsto v_t \in H^{1/2}$ is differentiable, $t\mapsto J_f(\Om_t)$ is $\mathcal C^1$ for $t$ close to zero, and
  $$
  \left(\frac{d}{dt} J_f(\Om_t)\right)_{|t=0}= - \frac{1}{2}\int_{\Om}( \dot v_0f+ \zeta \cdot \na f v_0+v_0f  \dv \zeta  ).
  $$
  Now, for $k\in \N$ large enough, we define $\chi_k\in \mathcal C^\infty_0(\R^2)$ by $\chi_k(x)=\chi(kr)$, where
  $\chi\in\mathcal C^\infty(\R)$ and $\chi(\rho)=0$ for $\rho \leq 1$, $\chi(\rho)=1$ for $\rho \geq 2$.
  Then, since $u_0=v_0$,
  \begin{eqnarray*}
  \left( \frac{d}{dt} J_f(\Om_t)\right)_{|t=0}&=&- \frac{1}{2}\lim_{k\to \infty} \int_{\R^2} \chi_k(\dot v_0f+ v_0\;\dv( \zeta f))\\
  &=&-\frac{1}{2}\lim_{k\to \infty}\left(\int_{\R^2} \chi_k u_0\;\dv (\zeta f)+\left[\frac{d}{dt}\int_{\R^2} \chi_k f v_t\right]_{|t=0}\right)\\
  &=&-\frac{1}{2}\lim_{k\to \infty}\left(\int_{\R^2} \chi_k u_0\;\dv (\zeta f)+\left[\frac{d}{dt}\int_{\R^2} u_t (f\chi_k)\circ \phi_t^{-1}j(-t,\cdot)\right]_{|t=0}\right).
  \end{eqnarray*}
  For fixed $k$ and for $t$ in a neighbourhood of zero, there exists a compact set $K_k$ such that $K_k\Subset \Om$ and $\Supp \chi_k\circ \phi_t^{-1}\subset K_k$.
  Since $\dot u_t\in L^\infty_t(L^1_x)$ according to \eqref{reg:duO-L1}, we can use the chain rule and write
  $$
  \left[\frac{d}{dt}\int_{\R^2} u_t (f\chi_k)\circ \phi_t^{-1}j(-t,\cdot)\right]_{|t=0}=\int_{\R^2}(\dot u_0 f\chi_k -u_0 f\zeta \cdot \na \chi_k- u_0 \chi_k \zeta \cdot \na f-u_0\chi_k f\dv \zeta   ).
  $$
  Using the decomposition \eqref{dec-u0} together with the definition of $\chi_k$, we deduce that
  $$
  \left|  \int_{\R^2} u_0 f \zeta \cdot \na \chi_k\right|\leq C \int_{\R} \sqrt{r_+}k|\chi'(kr)|\:dr\leq \frac{C}{\sqrt{k}}.
  $$
  Notice also that
  $$
  \int_{\R^2}(- u_0 \chi_k \zeta \cdot \na f-u_0\chi_k f\dv \zeta   )=-\int_{\R^2}  u_0 \chi_k\dv(f\zeta ).
  $$
  
  We now focus on the term involving $\du$; since $(-\Delta)^{1/2}u_0=f$ on the support of $\chi_k$, we have
  \begin{eqnarray*}
   \int_{\R^2}\dot u_0 f\chi_k &=&\int_{\R^2} \dot u_0 \chi_k (-\Delta)^{1/2}u_0\\
  &=&\int_{\R^2}u_0 (-\Delta)^{1/2}( \dot u_0 \chi_k ) .
  \end{eqnarray*}
  Notice also that $\chi_k  (-\Delta)^{1/2}( \dot u_0)=0$. Indeed,  $\dot u_t$ is smooth on $K_k$ for $t$ small enough (see for instance \eqref{reg:dut} below).  Hence for $x\in K_k$, the integral formula \eqref{int-lapla-frac} makes sense and  we have, using \eqref{reg:duO-L1},
  \begin{eqnarray*}
  (-\Delta)^{1/2} \dot u_0(x)&=&C_1\int_{\R^2}\frac{\du (x) -\du(y)-\na \du(x)\cdot (x-y)\mathbf{1}_{|x-y|\leq C}}{|x-y|^3}\:dy\\
  &=&C_1\left[\frac{d}{dt}\int_{\R^2}\frac{u_t (x) -u_t(y)-\na u_t(x)\cdot (x-y)\mathbf{1}_{|x-y|\leq C}}{|x-y|^3}\:dy\right]_{|t=0}\\
  &=&\frac{d}{dt}(f(x))=0.
  \end{eqnarray*}
  Eventually, we obtain
  $$
   \int_{\R^2}\dot u_0 f\chi_k=\int_{\R^2}u_0  \left[(-\Delta)^{1/2}, \chi_k \right]\dot u_0.
  $$
  Gathering all the terms, we infer eventually
  $$
  \left(\frac{d}{dt} J_f(\Om_t)\right)_{|t=0}= -\frac{1}{2}\lim_{k\to \infty}\int_{\R^2}u_0  \left[(-\Delta)^{1/2}, \chi_k \right]\dot u_0.
  $$
  Let us now express the right-hand side in terms of the kernel of the fractional laplacian.
  Using the integral formula \eqref{int-lapla-frac} together with the expansion \eqref{dec-dotu0}, we infer that
  $$
    \left[(-\Delta)^{1/2}, \chi_k \right]\dot u_0(x)=C_1\int_{\R^2}\frac{\du(y)(\chi_k(x)-\chi_k(y)) - \du(x)\na \chi_k(x)\cdot (x-y)\mathbf 1_{|x-y|\leq C}}{|x-y|^3}dy.
  $$
  The value of the integral above is independent of the constant $C$.
  Therefore the shape derivative of the energy $J_f$ is given by
  \be
  \label{cond-opti}
  \begin{aligned}
  \frac{d}{dt} J_f(\Om_t)_{|t=0}= -\frac{C_1}{2}\lim_{k\to \infty}I_k,\text{ where}\\
  I_k:= \int_{\R^2\times \R^2}\frac{u_0(x)}{|x-y|^3}\left\{ \du(y)(\chi_k(x)-\chi_k(y)) - \du(x)\na \chi_k(x)\cdot (x-y))\mathbf 1_{|x-y|\leq C}\right\}dx\:dy.
  \end{aligned}
  \ee
  
  The next step is to compute the asymptotic value of $I_k$ as $k\to\infty$. This part is rather technical, and involves several error estimates. However, the intuition leading the calculations is simple: first, the main order is obtained when $u_0$ and $\du$ are replaced by the leading terms in their respective developments. Moreover, because of the truncation $\chi_k$, the integral is concentrated on the boundary $\pa\Om $. All these claims will be fully justified in the next paragraph.
  
  If we follow these guidelines, we end up with
  $$
  I_k\approx\mathrm{PV}\int_{\Td\times\Td}\frac{u_0(x)\du(y)}{|x-y|^3}(\chi_k(x)-\chi_k(y))\:dx\:dy,
  $$
  where $\Td$ is a tubular neighbourhood of $\pa\Om$ of width $\delta\ll 1$ (see \eqref{def:Td}). If we change cartesian coordinates for local ones, and if we neglect the curvature of $\pa\Om$ - which is legitimate if $\delta$ is small - we are led to
  $$
  I_k\approx\mathrm{PV}\int_{[0,\delta]^2}\int_{(0,L)^2}\frac{\psi_0(s)\psi_1(s)}{((s-s')^2+(r-r')^2)^{3/2}}\sqrt{\frac{r}{r'}}(\chi(kr)-\chi(kr')\:ds\:ds'\:dr\:dr'.
  $$
  For simplicity, we have assumed that $\pa \Om$ only has one connected component, of length $L$. Integrating first with respect to $s'$, and changing variables by setting $\rho=kr$, $\rho'=kr'$, we obtain eventually
  \begin{eqnarray*}
  I_k&\approx&2 C \left(\int_0^L \psi_0\psi_1\right)\mathrm{PV}\int_0^\infty\int_0^\infty \sqrt{\frac{\rho}{\rho'}}\frac{\chi(\rho)- \chi(\rho')}{|\rho-\rho'|^2}\:d\rho\:d\rho'\\
  &\approx& C \left(\int_0^L \psi_0\psi_1\right) \int_0^\infty\int_0^\infty \frac{\chi(\rho)- \chi(\rho')}{\sqrt{\rho \rho'}(\rho-\rho')}\:d\rho\:d\rho',
  \end{eqnarray*}
  where 
  $$
  C=\int_0^\infty \frac{dz}{(1+z^2)^{3/2}}.
  $$
  There remains to prove that the value of the integral in the right-hand side does not depend on $\chi$, which we do at the end of the next paragraph, and the formula of Theorem \ref{prop:der-forme-qcq} is proved.
  
  Of course, the above calculation is very sketchy, and careful justification must be given at every step. But the general direction follows these formal arguments.
  
  \subsection{Asymptotic value of $I_k$}

  We now evaluate the integral $I_k$ defined in \eqref{cond-opti}. There are two main ideas:
  \begin{itemize}
   \item We prove that the domain of integration can be restricted to a tubular neighbourhood of $\pa\Om$ (see lemma \ref{lem:tubular}).
  
  \item Since the integral $I_k$ is bilinear in $u_0,\du$, we replace $u_0$ and $\du$ by their expansions in \eqref{dec-u0}, \eqref{dec-dotu0}. The leading term is obtained when $u_0$ and $\du$ are replaced by the first terms in their respective developments. We prove in the next subsection that all other terms vanish as $k\to \infty$.
  
  \end{itemize}

  We begin with the following Lemma (of which we postpone the proof):
  
  \begin{lemma}
  For $\delta>0$, let 
  \be\label{def:Td}
  \Td:=\{ x\in \R^2,\ d(x,\pa\Om)<\delta\}.
  \ee
  Choose $C=\frac{\delta}{2}$ in the definition of $I_k$ \eqref{cond-opti}. Then there exists a constant $C_\delta$ such that for $k>5/\delta$,
  $$
  \left|  \int_{(\Td\times \Td)^c}\frac{u_0(x)}{|x-y|^3}\left\{ \du(y)(\chi_k(x)-\chi_k(y)) - \du(x)\na \chi_k(x)\cdot (x-y))\mathbf 1_{|x-y|\leq \delta/2}\right\}dx\:dy\right|\leq \frac{C_\delta}{\sqrt{k}}.$$

  \label{lem:tubular}
  \end{lemma}
  We henceforth focus our attention on the value of the integral on $\Td\times \Td$.
  Replacing $u_0$ and $\du$ by the first terms in the expansions \eqref{dec-u0}, \eqref{dec-dotu0}, we define
  \begin{multline}\label{def:Ind}
  I_k^\delta:=\int_{\Td \times \Td} \frac{\psi_0(s)\sqrt{r_+}}{|x-y|^3}\left[\psi_1(s')\frac{\mathbf{1_{r'>0}}}{\sqrt{r'}}(\chi(kr)-\chi(kr') )\right. \\
  \left. -\psi_1(s)\frac{\mathbf{1_{r>0}}}{\sqrt{r}} k\chi'(kr) n(s)\cdot (x-y)\mathbf  1_{|x-y|\leq \delta/2}\right]dx\:dy.
  \end{multline}
  There is a slight abuse of notation in the integral above, since we use simultaneously cartesian and local coordinates. In order to be fully rigorous, $r,r',s,s'$ should be replaced by $r(x),\ r(y),\ s(x), s(y)$ respectively. However, the first step of the proof will be to express the integral $I_k^\delta$ in local coordinates, and therefore we will avoid these heavy notations.

  In fact, the computation of the limit of $I_k^\delta$ is much more technical than the estimation of all other quadratic remainder terms, which we will achieve in the next subsection. We now prove the following:

  \begin{lemma}
  There exists an explicit constant $C_2$, independent of $\chi$ and of $\Om$, such that
  $$
   \lim_{\delta\to 0}  \lim_{k\to \infty}I_k^\delta=C_2\sum_{i=1}^N\int_0^{L_i} \psi_0(s)\psi_1(s)\:ds.
  $$

  \label{lem:limite-opti}
  \end{lemma}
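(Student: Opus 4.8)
The plan is to compute $\lim_{k\to\infty} I_k^\delta$ by a change of variables into local (tubular) coordinates, and then to let $\delta \to 0$. I will proceed as follows.

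\textbf{Step 1: reduction to local coordinates and freezing of slow variables.} In the tubular neighborhood $T_\delta$, use coordinates $(s,r)$ on each connected component $\Gamma_i$ of $\partial\Omega$ (and similarly $(s',r')$ for $y$). The Jacobian of the change of variables is $1 + O(r)$ and is smooth. Since the kernel $|x-y|^{-3}$ is singular only as $x\to y$, the dominant contribution comes from $y$ close to $x$, i.e. $s'$ close to $s$ and $|r-r'|$ small. So the first task is to show that, up to an error vanishing as $k\to\infty$, we may (i) restrict to a single component $\Gamma_i$ at a time, (ii) replace $\psi_0(s)$, $\psi_1(s')$, $n(s)$ by their values at $s'=s$ — using $|\psi_0(s)-\psi_0(s')| \lesssim |s-s'|$ and Lipschitz bounds for $\psi_1$ — since each extra factor of $|s-s'|$ makes the kernel integrable and the total scales like a positive power of $1/k$, and (iii) replace $|x-y|^2$ by $|s-s'|^2 + |r-r'|^2$ in the kernel and straighten the geometry near $\Gamma_i$, the curvature contributing only lower-order terms. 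After these reductions one is left, for each $i$, with $\psi_0(s)\psi_1(s)$ times a universal integral
\[
\int_0^{L_i}\!\!\psi_0(s)\psi_1(s)\left(\int\!\!\int \frac{\sqrt{r_+}}{(\sigma^2 + (r-r')^2)^{3/2}}\Bigl[\tfrac{\mathbf 1_{r'>0}}{\sqrt{r'}}(\chi(kr)-\chi(kr')) - \tfrac{\mathbf 1_{r>0}}{\sqrt{r}}k\chi'(kr)(r-r')\mathbf 1_{\cdots}\Bigr]\,d\sigma\,dr\,dr'\right) ds + o(1),
\]
where $\sigma = s-s'$ ranges over (essentially) $\R$ once $\delta$ is small and $k$ large.

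\textbf{Step 2: computing the universal integral.} Integrate in $\sigma$ first: $\int_\R (\sigma^2 + a^2)^{-3/2}\,d\sigma = 2/a^2$ with $a = |r-r'|$. This collapses the $\sigma$-integral and, for the term with the $\mathbf 1_{|x-y|\le \delta/2}$ cutoff, I expect the cutoff to drop out in the limit (its effect on the $\sigma$-integral is $O(1)$ uniformly and the remaining $r$-integral localizes at $r\sim 1/k$). Then rescale $r = \rho/k$, $r' = \rho'/k$; the factors $\sqrt{r_+}$, $1/\sqrt{r'}$, $(r-r')$, $k\chi'(kr)\,dr\,dr'$ and $(r-r')^{-2}$ combine so that all powers of $k$ cancel, leaving a pure number
\[
C_2 = 2C_1\!\int\!\!\int \frac{\sqrt{\rho_+}}{(\rho-\rho')^2}\left[\frac{\mathbf 1_{\rho'>0}}{\sqrt{\rho'}}\bigl(\chi(\rho)-\chi(\rho')\bigr) - \frac{\mathbf 1_{\rho>0}}{\sqrt{\rho}}\,\chi'(\rho)(\rho-\rho')\right] d\rho\,d\rho'
\]
(up to the prefactors $C_1$ and $-1/2$ already present). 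A key subpoint is to verify this double integral converges absolutely: near $\rho=\rho'$ the bracket is $O((\rho-\rho')^2)$ by Taylor expansion (the two terms are precisely the value-difference and the first-order term of a commutator, exactly as in the integral representation \eqref{int-lapla-frac}), curing the $(\rho-\rho')^{-2}$ singularity; for large $\rho,\rho'$ the compact support of $\chi'$ and the decay $\chi(\rho)-\chi(\rho') \to 0$ give integrability; near $\rho = 0$ or $\rho' = 0$ the $\sqrt{\rho_+}$ and $1/\sqrt{\rho'}$ are integrable. I also need to check $C_2$ is independent of the particular cutoff profile $\chi$: this follows because changing $\chi$ changes the integrand by an expression that is itself a total derivative / exact commutator structure integrating to zero — concretely, $\partial_\chi C_2 = 0$ can be seen by differentiating under the integral and recognizing the result as $\int \sqrt{\rho_+}\,(-\Delta)^{1/2}_\rho(\text{something with compact support})$-type cancellation, or more elementarily by an integration-by-parts in $\rho$ that moves $\chi'$ around.

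\textbf{Main obstacle.} The delicate part is Step 1 — rigorously justifying that every approximation (freezing $s'\to s$, linearizing the geometry, dropping curvature, extending $\sigma$ to $\R$, discarding the $\delta/2$-cutoff) produces an error that is $o(1)$ as $k\to\infty$ (then $\delta\to 0$), \emph{uniformly} enough to survive the double limit. The subtlety is that $\dot u_0$ carries a nonintegrable-looking $1/\sqrt r$ singularity, and $u_0\sim\sqrt{r_+}$ only barely compensates; so the error estimates must carefully exploit the commutator cancellation $\chi(kr)-\chi(kr') - k\chi'(kr)(r-r')\cdot(\cdots) = O(k^2(r-r')^2)$ near the diagonal together with the explicit $k$-scaling, rather than crude absolute-value bounds. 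I expect the argument to mirror the error analysis already announced for the "quadratic remainder terms" in the next subsection, and to reuse Lemma \ref{lem:tubular} to control what happens outside $T_\delta \times T_\delta$. Once the localization is in place, the computation of $C_2$ in Step 2 is a (careful but essentially mechanical) one-dimensional exercise.
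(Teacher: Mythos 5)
Your overall strategy (pass to local coordinates, freeze the slow variables $s'\to s$, integrate out $\sigma=s-s'$, rescale $r=\rho/k$) is the same as the paper's, and your Step~1 reductions correspond to the paper's error estimates \eqref{error1}--\eqref{error4}. The genuine gap is in Step~2, in the treatment of the counterterm involving $\chi'$. You propose to drop the cutoff $\mathbf 1_{|x-y|\le \delta/2}$ and you assert that the resulting limiting integral for $C_2$ converges absolutely. It does not. After the $\sigma$-integration the $\chi'$-term becomes (up to constants) $\chi'(\rho)/(\rho-\rho')$, which has a non-integrable singularity at $\rho'=\rho$ and, once the cutoff is removed, a logarithmically divergent tail as $\rho'\to\pm\infty$ for $\rho$ in the support of $\chi'$; the two tails cancel only in a principal-value sense, and the localization you invoke ($r\sim 1/k$) concerns the $r$-variable, not the $r'$-variable where the divergence sits. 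Your Taylor-expansion argument ("the bracket is $O((\rho-\rho')^2)$") does control the diagonal for the \emph{combination} of the two terms, but it does not repair the tail, so the formula you write for $C_2$ is a divergent integral and Fubini cannot be applied in the order you use it. The paper's resolution is different and is the key idea you are missing: it keeps the cutoff, integrates first in $\xi=r-r'$ over the (effectively symmetric) window $\{|\xi|\le\delta/2\}$, and observes that the integrand of the $\chi'$-term is \emph{odd} in $\xi$, so that this term vanishes identically for $k\delta>4$ --- it never contributes to $C_2$ at all. For the remaining term with $\chi(kr)-\chi(kr')$, the paper must still symmetrize in $r\leftrightarrow r'$, turning $\sqrt{\rho/\rho'}$ into $\tfrac12(\rho-\rho')/\sqrt{\rho\rho'}$, precisely because $\sqrt{\rho/\rho'}\,(\chi(\rho)-\chi(\rho'))/(\rho-\rho')^2$ alone is $O(1/|\rho-\rho'|)$ near the diagonal and hence not absolutely integrable either; only the symmetrized kernel $(\chi(\rho)-\chi(\rho'))/((\rho-\rho')\sqrt{\rho\rho'})$ is.

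A secondary, smaller gap: your argument for the $\chi$-independence of $C_2$ ("a total derivative / exact commutator structure integrating to zero") is only a heuristic. The paper proves it by an explicit computation: writing $\chi(\rho)-\chi(\rho')=(\rho-\rho')\int_0^1\chi'(\tau\rho+(1-\tau)\rho')\,d\tau$, substituting $z=\rho'/\rho$ and integrating out $\rho$, which reduces $I_0$ to the universal value $\int_0^\infty \ln z\,\big((z-1)\sqrt z\big)^{-1}dz$. You would need to supply an argument of comparable precision to conclude.
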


  \begin{proof}
  Throughout the proof, different types of error terms will appear, which we have gathered in Lemma \ref{lem:error} below. We will therefore refer to \eqref{error1}, \eqref{error2}, \eqref{error3}, \eqref{error4} to classify the different types of error terms.
  
  Several preliminary simplifications are necessary:
  \begin{itemize}
  \item We use local coordinates instead of cartesian ones, i.e. we change $x$ into $(s,r)$ and $y$ into $(s',r')$. Since $|r|,|r'|\leq \delta$, the jacobian of this change of coordinates is, for $\delta>0$ small enough,
  $$
  \left|1+ r\kappa(s)\right|\: \left|1+r'\kappa(s')\right|= \left(1+r\kappa(s)\right)\left(1+r'\kappa(s')\right),
  $$
  where $\kappa$ is the algebraic curvature of $\pa \Om$. We refer to the Appendix for a simple proof. Notice that this jacobian is always bounded.

  \item We write $\Td= \cup_{i=1}^N \Td^i$, where
  $$
  \Td^i= \{ x\in \R^2,\ d(x,\Gamma_i)<\delta\}.
  $$
  
  If $\delta$ is small enough, $\Td^i\cap \Td^j=\emptyset$ for $i\neq j$, and $|x-y|$ is bounded from below for $x\in \Td^i, \: y\in\Td^j$ by a constant independent of $\delta$. Hence, for $i\neq j$,
  \begin{eqnarray*}
  &&\left|\int_{\Td^i\times\Td^j} \frac{\psi_0(s)\sqrt{r_+}}{|x-y|^3}\left[\psi_1(s')\frac{\mathbf{1_{r'>0}}}{\sqrt{r'}}(\chi(kr)-\chi(kr') )\right. \right.\\
  &&\left.\qquad\qquad\qquad\qquad\qquad\left. -\psi_1(s)\frac{\mathbf{1_{r>0}}}{\sqrt{r}} k\chi'(kr) n(s)\cdot (x-y)\mathbf  1_{|x-y|\leq \delta/2}\right]dx\:dy\right|\\
  &\leq & C \int_{\Td^i\times\Td^j}\mathbf{1_{r,r'>0}}\frac{\sqrt{r}}{\sqrt{r'}}\:dx\:dy\\
  &\leq & C\int_{(0, \delta)^2} \frac{\sqrt{r}}{\sqrt{r'}}\:dr\:dr'\\
  &\leq& C \delta^2.
  \end{eqnarray*}
  Therefore, in the integral defining $I_k^\delta$, we replace the domain of integration $\Td\times \Td$ by $\cup_{i=1}^N\Td^i\times\Td^i$, 
  and this introduces an error term of order $\delta^2.$
  
  \item In local coordinates, we write $\Td^i$ as $(0, L_i)\times (-\delta, \delta)$. We replace the jacobian $$\left(1+r\kappa(s)\right)\left(1+r'\kappa(s')\right)$$ by $(1+r\kappa(s))^2$. Since $s,s'\in (0,L_i)$ (i.e. $x$ and $y$ belong to the neighbourhood of the same connected component of $\pa \Om$), this change introduces error terms bounded by \eqref{error1}, \eqref{error3}, which vanish as $k\to \infty$ and $\delta\to 0$. More details will be given in the fourth step for similar error terms; we refer to Lemma \ref{lem:error}.

  \item We evaluate $|x-y|$ in local coordinates for $x,y\in \Td^i$. We have
  $$
  \begin{aligned}
  x=p(s)- rn(s),\\ 
  y=p(s') - r'n(s'),
  \end{aligned}
  $$
  where $p(s)\in \R^2$ is the point of $\pa \Om$ with arc-length $s$.
  Since the boundary $\Gamma_i$ is $\mathcal C^\infty$, 
  $$
  p(s)-p(s')=(s-s')\tau(s) + O(|s-s'|^2),
  $$
  where $\tau(s)$ is the unit tangent vector at $p(s)\in \pa \Om$.
  Using the Frenet-Serret formulas, we also have
  $$
  n(s)-n(s')=-(s-s')\kappa(s)\tau(s) + O(|s-s'|^2),
  $$
  where $\kappa$ is the curvature of $\Gamma_i$. Gathering all the terms, we infer that
  \be\label{x-y}
  x-y=(s-s')(1+\kappa (s)r)\tau(s) + (r-r')n(s)+ O(|s-s'|^2+|r-r'|^2),
  \ee
  so that
  \be\label{x-y-norm}
  |x-y|^2=(s-s')^2(1+\kappa(s)r)^2 + (r-r')^2+ O\left(|s-s'|^3+ |r-r'|^3\right)
  \ee
  and
  $$
  |x-y|^{-3}= \left((s-s')^2(1+\kappa(s)r)^2 + (r-r')^2\right)^{-3/2}\left(1+ O(|s-s'|+|r-r'|)\right).
  $$
  In particular, there exists a constant $C$ such that
  $$
  \frac{1}{|x-y|^3}\leq \frac{C}{ \left((s-s')^2 + (r-r')^2\right)^{3/2}},
  $$
  and replacing $|x-y|^{-3}$ by $ \left((s-s')^2(1+\kappa(s)r)^2 + (r-r')^2\right)^{-3/2}$ generates yet another error term bounded by \eqref{error1}, \eqref{error3}.

  \item We replace the factor $\psi_1(s')$ by $\psi_1(s)$; this also leads to an error term of the type \eqref{error1}.
  
  \item Using \eqref{x-y}, we infer that
  $$
  n(s)\cdot (x-y)=r-r'+ O(|s-s'|^2 + |r-r'|^2).
  $$
  The second term in the right-hand side of the above equality gives rise to an error term of the type \eqref{error3}.

  \item The last preliminary step is to replace the indicator function $\mathbf 1_{|x-y|\leq \delta/2}$ in \eqref{def:Ind} by a quantity depending on $s,s',r,r'$. Using the asymptotic development \eqref{x-y-norm} above, it can be easily proved that there exists a constant $c$ such that
  $$
  \left|\mathbf 1_{|x-y|\leq \delta/2}- \mathbf 1_{(s-s')^2(1+\kappa(s)r)^2 + (r-r')^2\leq \delta^2/4}\right| \leq \mathbf 1_{(1-c\delta)\frac{\delta}{2}\leq |x-y| \leq (1+c\delta)\frac{\delta}{2}}.
  $$
  Therefore the substitution between the two indicator functions yields an error term bounded by
  \begin{eqnarray*}
  &&\sum_{i=1}^N\int_{\Td^i\times \Td^i}\frac{dx\:dy}{|x-y|^2}|\na \chi_k(x)|  \mathbf 1_{(1-c\delta)\frac{\delta}{2}\leq |x-y| \leq (1+c\delta)\frac{\delta}{2}}\\
  &\leq &\sum_{i=1}^N \int_{\Td^i}|\na \chi_k(x)| \:dx\int_{\R^2} \frac{dz}{|z|^2} \mathbf 1_{(1-c\delta)\frac{\delta}{2}\leq |z| \leq (1+c\delta)\frac{\delta}{2}}\\
  &\leq & C \ln \left(\frac{1+c\delta}{1-c\delta}\right)\leq C \delta.
  \end{eqnarray*}

  \end{itemize}
  
  As a consequence, at this stage, we have proved that
  \begin{eqnarray}
  I_k^\delta\label{limit-1}
  &=&\sum_{i=1}^N\int_0^\delta\int_{-\delta}^\delta \int_{(0,L_i)^2} \frac{\psi_0(s)\psi_1(s)\sqrt{r}(1+r\kappa(s))^2}{\left((s-s')^2(1+\kappa(s)r)^2 + (r-r')^2\right)^{3/2}}\times\\&\times&\left[\frac{\mathbf{1_{r'>0}}}{\sqrt{r'}}(\chi(kr)-\chi(kr') ) -\frac{1}{\sqrt{r}} k\chi'(kr) (r-r')\mathbf  1_{(s-s')^2(1+\kappa r)^2 + (r-r')^2\leq \delta^2/4}\right]ds\,ds\,'dr'\,dr\nonumber\\
  &+&O\left( \frac{\ln k}{\sqrt{k}}+ \delta |\ln\delta|\right).\nonumber
  \end{eqnarray}
  
  We now evaluate the right-hand side of the above identity. We first prove that the term involving $\chi'(kr)$ does not contribute to the limit, due to symmetry properties of the integral. This was expected, since this term had a vanishing integral in the beginning; its only role was to ensure the convergence of $I_k$. We then focus on the term involving $\chi(kr)-\chi(kr')$, and we prove that its asymptotic value is independent of $\chi$.

  First, since the integral \eqref{limit-1} is convergent, we have
  $$
  \eqref{limit-1}=\sum_{i=1}^N\lim_{\eps\to 0} \int_0^\delta\int_{-\delta}^\delta \int_{(0,L_i)^2}\mathbf 1_{|r-r'|\geq \eps}\ \cdots
  $$
  Therefore, up to the introduction of a truncation, we can separate the two terms of \eqref{limit-1}. We have in particular
  \begin{eqnarray}
  &&\int_{-\delta}^\delta\frac{\psi_0(s)\psi_1(s)\mathbf{1}_{|r-r'|\geq \eps}(1+r\kappa(s))^2}{\left((s-s')^2(1+\kappa (s)r)^2 + (r-r')^2\right)^{3/2}} k\chi'(kr) (r-r')\mathbf  1_{(s-s')^2(1+\kappa r)^2 + (r-r')^2\leq \delta^2/4}dr'\nonumber\\
  &=& -\int_{-\delta-r}^{\delta-r} \frac{\psi_0(s)\psi_1(s)\mathbf{1}_{|\xi|\geq \eps}(1+r\kappa(s))^2}{\left((s-s')^2(1+\kappa(s)r)^2 + \xi^2\right)^{3/2}} k\chi'(kr) \xi\;\mathbf  1_{(s-s')^2(1+\kappa r)^2 + \xi^2\leq \delta^2/4}d\xi.\label{limit-1-bis}
  \end{eqnarray}
  Notice that the above integral only bears on the values of $\xi$ such that $|\xi|\leq \delta/2$. On the other hand, for all $r$ such that $\chi'(kr)\neq 0$, we have $1/k\leq r\leq 2/k$, so that if $k>4/\delta$,
  $$\begin{aligned}
     \delta-r\geq \delta - \frac{2}{k}>\frac{\delta}{2},\\
  -\delta-r<-\delta<-\frac{\delta}{2}.
   \end{aligned}
  $$
  Hence the integral \eqref{limit-1-bis} is in fact equal to
  $$
  \psi_0(s)\psi_1(s)(1+r\kappa(s))^2k\chi'(kr) \int_{-\delta/2}^{\delta/2}\frac{ \xi\mathbf  1_{|\xi|\geq \eps}\mathbf 1_{(s-s')^2(1+\kappa r)^2 + \xi^2\leq \delta^2/4}}{\left((s-s')^2(1+\kappa (s) r)^2 + \xi^2\right)^{3/2}}\;d\xi.
  $$
  Since the integrand is odd in $\xi$, the integral is identically zero for all $\eps>0$ and for all $\delta, k$ such that $k\delta>4$.

  There remains to investigate the first term in \eqref{limit-1}, namely
  \be\label{limit-2}
  \int_{(0,\delta)^2} \int_{(0,L_i)^2} \frac{\psi_0(s)\psi_1(s)(1+r\kappa(s))^2(\chi(kr)-\chi(kr') )}{\left((s-s')^2(1+\kappa(s)r)^2 + (r-r')^2\right)^{3/2}}\sqrt{\frac{r}{r'}} \mathbf 1_{|r-r'|\geq \eps}ds\:ds'\:dr'\:dr.
  \ee
  We first symmetrize the integral by exchanging the roles of $r$ and $r'$. We have 
  \begin{eqnarray*}
  && \sqrt{\frac{r}{r'}}(1+r\kappa(s))^2 - \sqrt{\frac{r'}{r}}(1+r'\kappa(s))^2\\& =& \frac{r-r'}{\sqrt{rr'}}(1+r\kappa(s))^2 + O\left(\sqrt{\frac{r'}{r}}|r-r'|\right),\\
  &&\frac{1}{\left((s-s')^2(1+\kappa(s)r)^2 + (r-r')^2\right)^{3/2}}- \frac{1}{\left((s-s')^2(1+\kappa(s)r')^2 + (r-r')^2\right)^{3/2}}\\&=& O\left(\frac{|r-r'|}{\left((s-s')^2+ (r-r')^2\right)^{3/2}}\right).\end{eqnarray*}
  The term 
  $$
  \int_{(0,\delta)^2} \int_{(0,L_i)^2} \left|\psi_0(s)\psi_1(s)(\chi(kr)-\chi(kr') )\right| \sqrt{\frac{r'}{r}} \frac{|r-r'|}{\left((s-s')^2+ (r-r')^2\right)^{3/2}}ds\:ds'\:dr'\:dr
  $$
  is bounded by an error term of the type \eqref{error1}, and is therefore $O(\ln k/\sqrt{k})$ for all $\eps>0$.
  As a consequence,
  \begin{eqnarray}
  &&\lim_{\eps\to 0}\eqref{limit-2}\nonumber\\
  &=& \frac{1}{2} \int_{(0,\delta)^2} \int_{(0,L_i)^2} \frac{\psi_0(s)\psi_1(s)(1+r\kappa(s))^2(\chi(kr)-\chi(kr') )}{\left((s-s')^2(1+\kappa(s)r)^2 + (r-r')^2\right)^{3/2}}\frac{r-r'}{\sqrt{rr'}}ds\,ds'\,dr'\,dr\label{limit-3}\\
  &&+ O\left( \frac{\ln k}{\sqrt{k}}\right).\nonumber
  \end{eqnarray}
  We now compute the integral with respect to $s'\in (0, L_i)$. 
  Setting 
  $$
  \phi(\xi):=\int_0^\xi\frac{dz}{(1+z^2)^{3/2}},
  $$
  we have 
  \begin{eqnarray*}
  &&\int_0^{L_i}\frac{ds'}{\left((s-s')^2(1+\kappa(s)r)^2 + (r-r')^2\right)^{3/2}}\\
  &=&\frac{1}{(r-r')^2(1+\kappa(s)r)}\left( \phi \left(\frac{(L_i-s)(1+\kappa(s) r)}{|r-r'|} \right)+ \phi\left(\frac{s(1+\kappa(s) r)}{|r-r'|}\right)\right). 
  \end{eqnarray*}
  Inserting this formula into the integral above and changing variables, we obtain
  \begin{multline*}
   \eqref{limit-3}= \frac{1}{2} \int_{(0,k\delta)^2} \int_0^{L_i}\psi_0(s) \psi_1(s) \frac{\chi(\rho)-\chi(\rho')}{(\rho-\rho')\sqrt{\rho\rho'}}\left(1+\kappa(s)\frac{\rho}{k}\right)\times\\
  \times\left[\phi \left(\frac{k(L_i-s)(1+\kappa(s)\rho/k)}{|\rho-\rho'|} \right)+ \phi\left(\frac{ks(1+\kappa(s)\rho/k)}{|\rho-\rho'|}\right)\right]ds\:d\rho\:d\rho'. 
  \end{multline*}
  Since
  $$
  \begin{aligned}
  \int_0^\infty \int_0^\infty \left| \frac{\chi(\rho)-\chi(\rho')}{(\rho-\rho')\sqrt{\rho\rho'}}\right|d\rho\:d\rho'<\infty,\\
  \text{and }0\leq \phi(\xi)\leq \phi(+\infty)<\infty\quad\forall \xi>0,
  \end{aligned}
  $$
  using Lebesgue's theorem, we infer that for all $\delta>0$,
  $$
  \lim_{k\to \infty}  \eqref{limit-3}=\phi(+\infty)\left(\int_0^{L_i}\psi_0(s) \psi_1(s) ds\right) \int_0^\infty \int_0^\infty \frac{\chi(\rho)-\chi(\rho')}{(\rho-\rho')\sqrt{\rho\rho'}}d\rho\:d\rho'.
  $$
  There only remains to prove that the integral involving $\chi$ is in fact independent of $\chi$. We have
  \begin{eqnarray*}
  I_0&:= & \int_0^\infty \int_0^\infty \frac{\chi(\rho)-\chi(\rho')}{(\rho-\rho')\sqrt{\rho\rho'}}d\rho\:d\rho'\\
  &=& \int_0^\infty \int_0^\infty\int_0^1 \chi'(\tau\rho+(1-\tau)\rho')\frac{d\tau\:d\rho\:d\rho'}{\sqrt{\rho\rho'}}\\
  &\underset{z=\rho'/\rho}{=}&\int_0^\infty \int_0^\infty\int_0^1\chi'(\rho(\tau+(1-\tau)z))\frac{d\tau\:d\rho\:dz}{\sqrt{z}}\\
  &=&\int_0^\infty\int_0^1\frac{1}{\tau+(1-\tau)z}\frac{d\tau\:dz}{\sqrt{z}}\\
  &=&\int_0^\infty\frac{\ln z}{(z-1)\sqrt{z}}dz.
  \end{eqnarray*}
  Gathering all the terms, we infer that
  $$\lim_{k\to \infty}  I_k^\delta=  I_0\phi(+\infty)\sum_{i=1}^N\int_0^{L_i}\psi_0\psi_1+ O\left( \delta |\ln\delta|\right).$$
  Passing to the limit as $\delta\to 0$, we obtain the result announced in Lemma \ref{lem:limite-opti}.

  \end{proof}

  \subsection{Evaluation of remainder terms in the integral $I_k$}

  $\triangleright$ We start with the proof of Lemma \ref{lem:tubular}. The idea is to divide $\R^2\times \R^2\setminus \Td\times \Td$ into subdomains and to evaluate the contribution of every subdomain.
  \begin{itemize}
   \item For $(x, y)\in (\Td^c\cap \Om)^2$, $\chi_k(x)=\chi_k(y)=1$ and $\na \chi_k(x)=0$ for $k>\delta^{-1}$, so that the contribution of $ \Td^c\cap \Om\times  \Td^c\cap \Om$ is zero.
  
  \item For $x\in \Td^c\cap \Om^c$, $u_0(x)=0$: the contribution of  $ \Td^c\cap \Om^c\times  \R^2$ is zero.
  
  \item For $x\in \Td^c\cap \Om , y \in \Td^c \cap \Om^c $, $\du(y)=0$ and $\na\chi_k(x)=0$, so that the contribution of $ \Td^c\cap \Om\times\Td^c \cap \Om^c$ is zero.
  
  \item For $x\in \Td^c\cap \Om$, $y\in \Td$, $\chi_k(x)=1$ and $\na \chi_k(x)=0$, so that the contribution of the sub-domain is 
  \begin{eqnarray*}
  &&\left|  \int_{\Td^c\cap \Om\times \Td}\frac{u_0(x)}{|x-y|^3}\left\{ \du(y)(\chi_k(x)-\chi_k(y)) - \du(x)\na \chi_k(x)\cdot (x-y)\mathbf 1_{|x-y|\leq \delta/2}\right\}dx\:dy\right| \\
  &\leq & C\int_{\Td^c\cap \Om\times \Td}\frac{1}{|x-y|^3}| \du(y)| (1-\chi_k(y))dx\:dy.
  \end{eqnarray*}
  Since $1-\chi_k$ is supported in $\Om^c \cup \{d(y,\pa \Om)\leq 2/k\}$ and $|\du(y)|\leq C \frac{\mathbf 1_\Om}{\sqrt{d(y,\pa \Om)}}$, the right-hand side is bounded by
  $$
  \frac{C}{|\delta-\frac{2}{k}|^3}\int_0^{2/k}\frac{dr}{\sqrt{r}}\leq \frac{C_\delta}{\sqrt{k}}.
  $$

  \item For $x\in \Td$, $y\in \Td^c$, the integral
  $$
  \int_{\Td \times \Td^c}\frac{u_0(x)\du(x)}{|x-y|^3}\na \chi_k(x)\cdot (x-y)\mathbf 1_{|x-y|\leq \delta/2}\:dx\:dy
  $$
  is in fact supported in 
  $$
  (\{d(x,\pa \Om)\leq 2/k\}\times \{d(y,\pa \Om)>\delta\})\cap\{|x-y|\leq \delta/2\}.
  $$
  It is easily seen that for $k$ large enough (say $k>5/\delta$) this set is empty, and therefore  the integral is zero.

  \item There only remains
  $$
  \int_{\Td \times \Td^c}\frac{u_0(x)}{|x-y|^3} \du(y)(\chi_k(x)-\chi_k(y)) \:dx\:dy.
  $$
  Using the same kind of estimates as for the domain $\Td^c\cap \Om\times \Td$, it can be proved that this term is bounded by $C_\delta k^{-3/2}$.\qed
  \end{itemize}

  $\triangleright$ We now estimate the remainder terms on $\Td\times \Td$. We use the following lemma:

  \begin{lemma}
  Let $k\geq 1$ and $\delta>0$ such that $k\delta>5$.
  
  Then the following estimates hold: for $1\leq i \leq N$,
  \begin{align}
   \int_{T_\delta^i\times T_\delta^i} \mathbf 1_{r,r'>0}\frac{\sqrt{r}}{\sqrt{r'}(\sqrt{r}+\sqrt{r'})} \frac{1}{|x-y|^2}|\chi_k(x)-\chi_k(y)|\:dx\:dy=O\left( \frac{\ln k}{\sqrt{k}}\right),\label{error1}\\
  \int_{T_\delta^i\times T_\delta^i} \frac{\sqrt{r_+}}{|x-y|^3}\left|\chi_k(x)-\chi_k(y)- \na \chi_k(x)\cdot(x-y)\mathbf{1}_{|x-y|\leq \delta/2}\right|\:dx\:dy=O\left(\sqrt{\delta }+ \frac{\ln k}{\sqrt{k}}\right)\label{error2},\\
  \int_{T_\delta^i\times T_\delta^i}\frac{|\na \chi_k(x)|}{|x-y|}dx\:dy=O(\delta|\ln \delta|),\label{error3}\\
  \int_{T_\delta^i\times T_\delta^i}\mathbf{1}_{r>0, r'<0}\frac{\sqrt{r}}{|x-y|^3}|\chi_k(x)-\chi_k(y)|\:dxdy=O\left(\frac{1}{ k^{1/2}}\right)\label{error4}.
  \end{align}

   \label{lem:error}

  \end{lemma}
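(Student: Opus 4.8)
The plan is to prove each of the four estimates \eqref{error1}--\eqref{error4} by passing to local coordinates $(s,r)$ and $(s',r')$ on each component $T_\delta^i$, using that the Jacobian of this change of variables is bounded (since $|r|,|r'|\le\delta$), and then exploiting the lower bound $|x-y|^2 \gtrsim (s-s')^2 + (r-r')^2$ established in \eqref{x-y-norm}. After this reduction, one integrates out the tangential variables $s,s'$ first: for a fixed $r,r'$ with $|r-r'|=:\rho>0$, one has the elementary bounds
$$
\int_{(0,L_i)^2}\frac{ds\,ds'}{\big((s-s')^2+\rho^2\big)^{3/2}} \le \frac{C}{\rho^2},\qquad
\int_{(0,L_i)^2}\frac{ds\,ds'}{\big((s-s')^2+\rho^2\big)^{1/2}} \le C\,(1+|\ln\rho|).
$$
This turns each estimate into a one- or two-dimensional integral in $r,r'$, after which the claimed rates follow from the support properties of $\chi_k$ and $\chi_k'$: namely $|\na\chi_k|\le Ck$ supported on $\{1/k\le r\le 2/k\}$, and $\chi_k(x)-\chi_k(y)$ supported on the set where at least one of $r,r'$ lies in $(-\infty,2/k)$, with $|\chi_k(x)-\chi_k(y)|\le Ck|r-r'|\wedge 2$.

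More concretely, for \eqref{error1} one restricts to $r,r'>0$, integrates $s,s'$ to gain $|x-y|^{-2}\to (1+|\ln|r-r'||)$, and is left with
$$
\int_0^\delta\!\!\int_0^\delta \frac{\sqrt r}{\sqrt{r'}(\sqrt r+\sqrt{r'})}\,\big(1+|\ln|r-r'||\big)\,\big(k|r-r'|\wedge 2\big)\,\mathbf 1_{r\le 2/k \text{ or } r'\le 2/k}\,dr\,dr',
$$
which one bounds by splitting according to which variable is $O(1/k)$ and using $\frac{\sqrt r}{\sqrt{r'}(\sqrt r+\sqrt{r'})}\le \min(1,\sqrt{r/r'})/\sqrt{r'}$; each piece yields $O(k^{-1/2}\ln k)$ after an explicit computation in the rescaled variables $\rho=kr$, $\rho'=kr'$. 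For \eqref{error3}, the factor $|k\chi'(kr)|$ forces $r\in(1/k,2/k)$ and contributes a factor $k$, the tangential integration gives $(1+|\ln|r-r'||)$, and $\int_{-\delta}^{\delta} dr'$ over a window of size $2\delta$ together with the prefactor $\int_{1/k}^{2/k}k\,dr = O(1)$ produces $O(\delta|\ln\delta|)$. Estimate \eqref{error4} is the easiest: $r>0$, $r'<0$ forces $|r-r'|=r+|r'|$, and $\chi_k(x)-\chi_k(y)=\chi_k(x)$ is supported on $\{r\le 2/k\}$; one bounds $|x-y|^{-3}\le C((s-s')^2+(r-r')^2)^{-3/2}$, integrates $s'$ to get $|x-y|^{-3}\to |r-r'|^{-2}$, then integrates $r'<0$ to get $|r-r'|^{-2}\to r^{-1}$, leaving $\int_0^{2/k}\sqrt r\, r^{-1}\,dr = O(k^{-1/2})$.

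The genuinely delicate one is \eqref{error2}, since there the singularity at $x=y$ is not killed by restricting to $r,r'$ of opposite sign or by a $\chi_k'$ support: one must use the second-order Taylor structure $\chi_k(x)-\chi_k(y)-\na\chi_k(x)\cdot(x-y)\mathbf 1_{|x-y|\le\delta/2}$. I would split the domain into $\{|x-y|>\delta/2\}$, where the indicator is absent and one bounds each of the three terms separately — the $\sqrt{r_+}|\chi_k(x)-\chi_k(y)|/|x-y|^3$ piece gives $O(\sqrt\delta)$ after the $s$-integration and the $\sqrt{r}$-integration over $r\in(0,\delta)$, and the $\na\chi_k$ term vanishes for $k$ large since $\na\chi_k$ is supported near $\pa\Om$ while $|x-y|>\delta/2$ — and $\{|x-y|\le\delta/2\}$, where one uses the pointwise bound $|\chi_k(x)-\chi_k(y)-\na\chi_k(x)\cdot(x-y)|\le C\|D^2\chi_k\|_\infty|x-y|^2 \le Ck^2|x-y|^2$, valid on the $\chi_k'$-support, together with the complementary bound $|\chi_k(x)-\chi_k(y)|\le Ck|x-y|$ and $|\na\chi_k(x)\cdot(x-y)|\le Ck|x-y|\mathbf 1_{r\in(1/k,2/k)}$ off it; integrating $\sqrt{r_+}\,k^2 |x-y|^2 / |x-y|^3 = \sqrt{r_+}\,k^2/|x-y|$ over $|x-y|\le\delta/2$ and over the thin slab $r\in(1/k,2/k)$ produces the $O(k^{-1/2}\ln k)$ term (the $\ln k$ coming, as always, from the tangential integration). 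Assembling the pieces gives the stated $O(\sqrt\delta + k^{-1/2}\ln k)$. The main obstacle throughout is bookkeeping: one must consistently distinguish the regions $r\lesssim 1/k$, $1/k\lesssim r\lesssim\delta$, and track the logarithmic loss from the tangential integration, but no single estimate is deep — the point of the lemma is precisely that all these remainders are controlled by the stated powers of $\delta$ and $k$, so that in \eqref{limit-1} they disappear in the iterated limit $k\to\infty$, $\delta\to0$.
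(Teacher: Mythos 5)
Your overall strategy --- local coordinates with bounded Jacobian, tangential integration via $\int_0^{L_i}\bigl((s-s')^2+(r-r')^2\bigr)^{-\alpha/2}ds'\lesssim |r-r'|^{1-\alpha}$ (or $|\ln|r-r'||$ when $\alpha=1$), then a decomposition in $r,r'$ driven by the supports of $1-\chi_k$ and $\chi_k'$ --- is exactly the paper's, and your treatments of \eqref{error3} and of the far region $|x-y|>\delta/2$ in \eqref{error2} are correct. But two of your reductions fail as written. For \eqref{error1} the kernel is $|x-y|^{-2}$, so the tangential integration produces $C|r-r'|^{-1}$, \emph{not} $C(1+|\ln|r-r'||)$; the logarithm only appears after the Lipschitz bound $|\chi_k(x)-\chi_k(y)|\le Ck|x-y|$ has already reduced the kernel to $k|x-y|^{-1}$, and then the $s'$-integrated bound is $Ck(1+|\ln|r-r'||)$, not $Ck|r-r'|(1+|\ln|r-r'||)$. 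Your displayed reduced integral therefore \emph{underestimates} the true one, and computing it proves nothing; the correct version is $\min\bigl(Ck(1+|\ln|r-r'||),\,C|r-r'|^{-1}\bigr)$, applied respectively on $\{r,r'\lesssim 1/k\}$ and on the mixed regions --- which is precisely the paper's four-subdomain split that your verbal description gestures at without carrying out.

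The more serious gap is in \eqref{error2}: integrating $\sqrt{r_+}\,k^2/|x-y|$ over the slab $r\in(1/k,2/k)$ and $|x-y|\le \delta/2$ gives $k^2\cdot k^{-3/2}\cdot\delta=\delta\sqrt{k}$, which \emph{diverges} as $k\to\infty$ (and produces no $\ln k$). The Taylor bound $Ck^2|x-y|^2$ is only affordable when \emph{both} normal coordinates are $O(1/k)$ --- so $|r-r'|\lesssim 1/k$, while $s,s'$ may be far apart, which is where the logarithm enters through the $s'$-integration, yielding $k^2\cdot k^{-3/2}\cdot k^{-1}\ln k=k^{-1/2}\ln k$. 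Outside that region one must revert to zeroth/first-order bounds, and the region $\{r\gtrsim 1/k,\ r'\lesssim 1/k\}$ --- the actual source of the paper's $\sqrt{\delta}$ term, via $\int_{3/k}^{\delta}\sqrt{r}\,(r-2/k)^{-1}dr$ --- is absent from your account. Finally, for \eqref{error4} your justification is false: $\chi_k(x)=\chi(kr)$ is not supported in $\{r\le 2/k\}$ (it equals $1$ for $r\ge 2/k$), so the region $r\in(2/k,\delta)$, $r'<0$ contributes $\int_{2/k}^{\delta}\delta\,\bigl(\sqrt{r}(r+\delta)\bigr)^{-1}dr=O(\sqrt{\delta})$, not zero. (The paper's own proof makes the same unjustified restriction to $r\le 2/k$; the honest output of the argument is $O(\sqrt{\delta}+k^{-1/2})$, which is harmless since the remainders need only vanish in the iterated limit $k\to\infty$ then $\delta\to 0$.)
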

  
  \begin{proof}
  Throughout the proof, we use the following facts:
  \begin{itemize}
  \item The jacobian of the change of variables $(x,y)\to(s,r,s',r')$ is
  $$
  \left(1+ r\kappa(s)\right)\left(1+ r'\kappa(s')\right)\leq C;
  $$
   \item Using the expansion \eqref{x-y-norm}, we infer that there exists a constant $C$ such that
  $$
  \frac{1}{|x-y|}\leq \frac{C}{((s-s')^2+(r-r')^2)^{1/2}}.
  $$
  \item For all $\alpha\geq 1$, there exists a constant $C_\alpha$ such that for all $s\in(0,L_i)$, for all $r,r'\in (-\delta, \delta)$ with $r\neq r'$,
  \be\label{in:int-s'}
  \int_0^{L_i} \frac{ds'}{((s-s')^2+(r-r')^2)^{\alpha/2}}\leq C_\alpha \left\{\begin{array}{ll}
                                                                           |r-r'|^{1-\alpha}&\text{ if }\alpha>1,\\
  									|\ln |r-r'|\: |&\text{ if }\alpha=1.
                                                                          \end{array}\right.
  \ee
  Indeed, for $\alpha>1$
  \begin{eqnarray*}
   \int_0^{L_i} \frac{ds'}{((s-s')^2+(r-r')^2)^{\alpha/2}}&\leq & \int_\R\frac{dz}{(z^2+(r-r')^2)^{\alpha/2}}\\
  &\underset{\xi=z/|r-r'|}{\leq} &|r-r'|^{1-\alpha}\int_{\R}\frac{d\xi}{(1+\xi^2)^{\alpha/2}},\end{eqnarray*}
  while for $\alpha=1$
  \begin{eqnarray*}
   \int_0^{L_i} \frac{ds'}{((s-s')^2+(r-r')^2)^{1/2}}&=&\int_{-\frac{s}{|r-r'|}}^{\frac{L_i-s}{|r-r'|}}\frac{d\xi}{(1+\xi^2)^{1/2}}\\
  &\leq & \int_{-\frac{L_i}{|r-r'|}}^{\frac{L_i}{|r-r'|}}\frac{d\xi}{(1+\xi^2)^{1/2}}\\
  &\leq & 2\ln \left(\frac{L_i}{|r-r'|}\right).
  \end{eqnarray*}
  
  \end{itemize}
  We now tackle the proof of \eqref{error1}-\eqref{error4}.
  
  \begin{enumerate}
  \item \textit{Proof of \eqref{error1}:} we split the domain $(\Td\times \Td)\cap\{r,r'\geq 0\}$ into four subdomains:
  \begin{itemize}
   \item $0\leq r, r'\leq 3/k$;
  \item $0\leq r'\leq 3/k$ and $3/k<r<\delta$;
  \item $0\leq r\leq 3/k$ and $3/k<r'<\delta$;
  \item  $3/k<r, r'<\delta$.
  \end{itemize}
  Since $\chi_k(x)=1$ for $r>2/k$, it is easily seen that $\chi_k(x)-\chi_k(y)=0$ on the last subdomain. Moreover, we obviously have
  \begin{eqnarray*}
  &&\int_{s,s'\in \Gamma_i} \int_{0\leq r\leq 3/k,\;3/k<r'<\delta}\frac{\sqrt{r}}{\sqrt{r'}(\sqrt{r}+\sqrt{r'})} \frac{1}{|x-y|^2}|\chi_k(x)-\chi_k(y)|\:dx\:dy\\
  &\leq & \int_{s,s'\in \Gamma_i} \int_{0\leq r'\leq 3/k,\;3/k<r<\delta}\frac{\sqrt{r}}{\sqrt{r'}(\sqrt{r}+\sqrt{r'})} \frac{1}{|x-y|^2}|\chi_k(x)-\chi_k(y)|\:dx\:dy.
  \end{eqnarray*}
  Therefore we focus on the estimates of the integral on the first two subdomains above. First, since $\chi_k$ is a Lispchitz function with a Lipschitz constant of order $k$, we have
  \begin{eqnarray*}
  &&\int_{s,s'\in \Gamma_i} \int_{0\leq r,r'\leq 3/k}\frac{\sqrt{r}}{\sqrt{r'}(\sqrt{r}+\sqrt{r'})} \frac{1}{|x-y|^2}|\chi_k(x)-\chi_k(y)|\:dx\:dy\\ 
  &\leq& C k\int_{s,s'\in \Gamma_i}\int_{0\leq r,r'\leq 3/k}\frac{\sqrt{r}}{\sqrt{r'}(\sqrt{r}+\sqrt{r'})}\frac{1}{|x-y|}\:dx\:dy\\
  &\leq & Ck\int_{0\leq r,r'\leq 3/k}\int_{(0, L_i)^2}\frac{\sqrt{r}}{\sqrt{r'}(\sqrt{r}+\sqrt{r'})} \frac{1}{((s-s')^2+(r-r')^2)^{1/2}}ds\:ds'\:dr\:dr'\\
  &\leq & Ck\int_{0\leq r,r'\leq 3/k}\frac{\sqrt{r}}{\sqrt{r'}(\sqrt{r}+\sqrt{r'})}|\ln |r-r'|\:|\:dr\:dr'\\
  &\leq & Ck\left(\int_{0}^{3/k}\frac{dr'}{\sqrt{r'}}\right)\left(\int_{-3/k}^{3/k}|\ln |z|\:|\:dz\right)\\
  &\leq& Ck\times \frac{1}{\sqrt{k}}\times \frac{\ln k}{k}.
  \end{eqnarray*}
  As for the second subdomain, since $\chi_k(x)=1$ for $r>2/k$, in fact, the domain of integration in $r'$ is only $r'<2/k$. Since $\chi_k$ is bounded by $1$, we have
  \begin{eqnarray*}
   &&\int_{s,s'\in \Gamma_i}\int_{0\leq r'\leq 3/k,\;3/k<r<\delta}\frac{\sqrt{r}}{\sqrt{r'}(\sqrt{r}+\sqrt{r'})}\frac{1}{|x-y|^2}|\chi_k(x)-\chi_k(y)|\:dx\:dy\\
  &\leq &C \int_{0\leq r'\leq 2/k,\;3/k<r<\delta}\int_{(0,L_i)^2}\frac{\sqrt{r}}{\sqrt{r'}(\sqrt{r}+\sqrt{r'})}\frac{1}{(s-s')^2+(r-r')^2}ds\:ds'\:dr\:dr'\\
  &\leq & C\int_{0\leq r'\leq 2/k,\;3/k<r<\delta}\frac{\sqrt{r}}{\sqrt{r'}(\sqrt{r}+\sqrt{r'})}\frac{1}{|r-r'|}dr\:dr'\\
  &\leq & C\left(\int_0^{2/k}\frac{dr'}{\sqrt{r'}}\right)\left(\int_{1/k}^\delta \frac{dz}{z}\right)\\
  &\leq & C\frac{\ln k}{\sqrt{k}}.
  \end{eqnarray*}

  \item \textit{Proof of \eqref{error2}:} we use the same type of domain decomposition as above; the only difference lies in the fact that $r'$ may take negative values.
  \begin{itemize}
   \item On the subdomain $r\geq 3/k, r'\geq 3/k$, the integral is identically zero;
  
  \item On the subdomain $3/k<r<\delta$, $-\delta<r'<3/k$, we have $\na\chi_k(x)=0$ and as soon as $r'\geq 2/k$,
  $$
  \chi_k(x)-\chi_k(y)-\na \chi_k(x)\cdot(x-y)\mathbf{1}_{|x-y|\leq \delta/2}=0.
  $$
   Therefore
  \begin{eqnarray*}
   &&\int_{\substack{{3/k<r<\delta,-\delta<r'<3/k},\\{s,s'\in \Gamma_i}} }{\frac{\sqrt{r_+}}{|x-y|^3}}\left|\chi_k(x)-\chi_k(y)- \na \chi_k(x)\cdot(x-y)\mathbf{1}_{|x-y|\leq \delta/2}\right|\:dx\:dy\\
  &\leq &C\int_{3/k<r<\delta,-\delta<r'<2/k} \int_{(0,L_i)^2}{\frac{\sqrt{r_+}}{((s-s')^2+(r-r')^2)^{3/2}}}ds\:ds'\:dr\:dr'\\
  &\leq & C\int_{3/k<r<\delta,-\delta<r'<2/k}\frac{\sqrt{r}}{|r-r'|^2}\:dr\:dr'\\
  &\leq & C\int_{3/k}^\delta \frac{\sqrt{r}}{r-\frac{2}{k}}dr.
  \end{eqnarray*}
  Using the inequality 
  $$
  \sqrt{r}\leq \sqrt{r-\frac{2}{k}}+ \sqrt{\frac{2}{k}},
  $$
  we infer eventually that the integral is bounded by
  $
  C\left(\sqrt{\delta }+ \frac{\ln k}{\sqrt{k}}\right).
  $
  
  \item On the subdomain $0\leq r\leq 3/k$, $|r'|\geq 3/k$, we use the bound
  $$
  |\chi_k(x)-\chi_k(y)-\na \chi_k(x)\cdot (x-y)\mathbf{1}_{|x-y|\leq \delta/2}|\leq Ck |x-y|,
  $$
  so that
  \begin{eqnarray*}
  &&\int_{\substack{{0\leq r\leq 3/k, 3/k<|r'|<\delta},\\{s,s'\in \Gamma_i}} } {\frac{\sqrt{r}}{|x-y|^3}}\left|\chi_k(x)-\chi_k(y)- \na \chi_k(x)\cdot(x-y)\mathbf{1}_{|x-y|\leq \delta/2}\right|\:dx\:dy\\ 
  &\leq & Ck\int_{0\leq r\leq 3/k, 3/k<|r'|<\delta} {\frac{\sqrt{r}}{|r-r'|}}dr\:dr'\\
  &\leq & -Ck\int_0^{3/k}\sqrt{r}\ln\left(\frac{3}{k}-r\right)\:dr\\
  &\leq & Ck\times\frac{1}{\sqrt{k}}\times\frac{\ln k}{k}.
  \end{eqnarray*}
  
  \item On the subdomain $0\leq r\leq 3/k, |r'|\leq 3/k$, we use a Taylor-Lagrange expansion for the function $\chi_k$, which yields
  $$
  |\chi_k(x)-\chi_k(y)-\na \chi_k(x)\cdot (x-y)\mathbf{1}_{|x-y|\leq \delta/2}|\leq Ck^2 |x-y|^2,
  $$
  so that
  \begin{eqnarray*}
  &&\int_{\substack{{0\leq r\leq 3/k, |r'|\leq 3/k},\\{s,s'\in \Gamma_i}} } {\frac{\sqrt{r_+}}{|x-y|^3}}\left|\chi_k(x)-\chi_k(y)- \na \chi_k(x)\cdot(x-y)\mathbf{1}_{|x-y|\leq \delta/2}\right|\:dx\:dy\\ 
  &\leq & Ck^2\int_{0\leq r\leq 3/k, |r'|\leq 3/k}\sqrt{r}\left| \ln |r-r'|\right|\:dr\:dr'\\
  &\leq & Ck^2 \frac{\ln k}{k} k^{-3/2}.
  \end{eqnarray*}
  
  \end{itemize}
  
  \item  \textit{Proof of \eqref{error3}:} this term is easier to estimate than \eqref{error1}, \eqref{error2}. We merely have
  \begin{eqnarray*}
  \int_{T_\delta^i\times T_\delta^i}\frac{|\na \chi_k(x)|}{|x-y|}dx\:dy
  &\leq & C \int_{(-\delta,\delta)^2}k|\chi'(kr)|\; \left|\ln|r-r'|\:\right|\:dr\:dr'\\
  &\leq & C \int_{-2\delta}^{2\delta}\left|\ln|z|\:\right|\:dz\\
  &\leq & C \delta |\ln \delta|.
  \end{eqnarray*}

  \item \textit{Proof of \eqref{error4}:} since $\chi_k(y)=0$ if $r'<0$, we have
  \begin{eqnarray*}
   &&\int_{T_\delta^i\times T_\delta^i}\mathbf{1_{r>0, r'<0}}\frac{\sqrt{r}}{|x-y|^3}|\chi_k(x)-\chi_k(y)|\:dxdy\\
  &=&\int_{T_\delta^i\times T_\delta^i}\mathbf{1_{r>0, r'<0}}\frac{\sqrt{r}}{|x-y|^3}\chi_k(x)\:dxdy\\
  &\leq &C\int_{-\delta}^0\int_0^\delta \frac{\sqrt{r}}{|r-r'|^2}\chi(kr)\:dr\:dr'\\
  &\leq & C \int_0^{2/k} \sqrt{r}\left(\frac{1}{r}- \frac{1}{r+\delta}\right)\:dr\\
  &\leq & \frac{C}{\sqrt{k}}.
  \end{eqnarray*}

  \end{enumerate}

  \end{proof}

  We now address the rest of the proof of Theorem \ref{thm:derivee-forme}.
  With the help of Lemma \ref{lem:error}, the estimation of the remainder terms in $I_k$ is immediate. We set
  $$
  \begin{aligned}
  w_0(x):=\mathbf 1_{r>0} \frac{\psi_1(s)}{\sqrt{r}},\\
  w_1(x):= \sqrt{r_+}\psi_2(s) + u_2(x),
  \end{aligned}
  $$
  so that $\dot u_0=w_0+w_1$. We write
  \begin{eqnarray*}
  && \dot u_0(y) (\chi_k(x)-\chi_k(y))-\du(x)\na \chi_k(x)\cdot(x-y)\mathbf{1}_{|x-y|\leq \delta/2}\\
  &=&\du(x)\left[\chi_k(x)-\chi_k(y)- \na \chi_k(x)\cdot(x-y)\mathbf{1}_{|x-y|\leq \delta/2}\right]\\
  &&+(\du(y)-\du(x))(\chi_k(x)-\chi_k(y)).
  \end{eqnarray*}
  There exists a constant $C$ such that
  $$
  |u_1(x)\du(x)| + |\sqrt{r_+}\psi_0(s)w_1(x)|\leq C \sqrt{r_+}\quad\forall x\in \R^2,
  $$
  so that
  \begin{eqnarray*}
  &&\left|\int_{\Td\times \Td}\frac{u_1(x)\du(x)}{|x-y|^3} \left[\chi_k(x)-\chi_k(y)- \na \chi_k(x)\cdot(x-y)\mathbf{1}_{|x-y|\leq \delta/2}\right]\:dx\:dy \right| \\
  &+&\left|\int_{\Td\times \Td}\frac{\sqrt{r_+}\psi_0(s)w_1(x)}{|x-y|^3} \left[\chi_k(x)-\chi_k(y)- \na \chi_k(x)\cdot(x-y)\mathbf{1}_{|x-y|\leq \delta/2}\right]\:dx\:dy \right|\\
  &\leq & \eqref{error2}.
  \end{eqnarray*}
  On the other hand, simple calculations show that
  \begin{eqnarray*}
  |\du(x)-\du(y)|&\leq& C|x-y|\left(1+\frac{\mathbf{1}_{r,r'>0}}{\sqrt{rr'}(\sqrt{r}+ \sqrt{r'})}\right)\\
  &&+C\mathbf{1}_{rr'<0}\frac{1}{\sqrt{r_+}+ \sqrt{r'_+}},\\
  |w_1(x)-w_1(y)|&\leq& C|x-y|\left(1+\frac{\mathbf{1}_{r,r'>0}}{\sqrt{r}+ \sqrt{r'}}\right)\\
  &&+C\mathbf{1}_{rr'<0}(\sqrt{r_+}+ \sqrt{r'_+}).
  \end{eqnarray*}
  
  As a consequence,
  $$
  \begin{aligned}
  |u_1(x)|  |\du(x)-\du(y)|\leq C|x-y|\left(r_+ + \mathbf{1}_{r,r'>0}\frac{\sqrt{r}}{\sqrt{r'}(\sqrt{r }+ \sqrt{r'})}\right)+C \mathbf{1_{r>0, r'<0}}\sqrt{r},\\
  \left|\sqrt{r_+}\psi_0(s) \right| \:|w_1(x)-w_1(y)|\leq C|x-y|\left(\sqrt{r_+} + \mathbf{1}_{r,r'>0}\frac{\sqrt{r}}{\sqrt{r}+ \sqrt{r'}}\right)+C \mathbf{1_{r>0, r'<0}}r,
  \end{aligned}
  $$
  so that
  \begin{eqnarray*}
   &&\left|\int_{\Td\times \Td}\frac{u_1(x)}{|x-y|^3}(\du(x)-\du(y))(\chi_k(x)-\chi_k(y))\:dx\:dy\right|\\
  &+&\left|\int_{\Td\times \Td}\frac{\sqrt{r_+}\psi_0(s)}{|x-y|^3}(|w_1(x)-w_1(y))(\chi_k(x)-\chi_k(y))\:dx\:dy\right|\\
  &\leq & \eqref{error1} + \eqref{error4}.
  \end{eqnarray*}

  Gathering all the terms, we deduce that
  $$
  \lim_{\delta\to 0}\lim_{k\to \infty} I_k=C_2\sum_{i=1}^N\int_0^{L_i} \psi_0\psi_1,
  $$
  where $C_2$ is an explicit positive constant. 
  Moreover, using formulas \eqref{def-psi0}, \eqref{def-psi1}, we have
  $$
  \psi_0(s)\psi_1(s)= -\frac{1}{2}(\pa_n^{1/2} u_0(s))^2 \zeta \cdot n(s),
  $$
  where $\zeta(x)=\dot \phi_0(x)$. Eventually, we deduce that
  $$
  \frac{d J_f(\Om_t)}{dt}_{|t=0}= - \frac{C_1C_2}{4 }\sum_{i=1}^N\int_0^{L_i}(\pa_n^{1/2} u_0(s))^2 \zeta \cdot n(s)ds= - \frac{C_1C_2}{4 }\int_{\pa \Om} (\pa_n^{1/2} u_0(s))^2 \zeta \cdot n(s)d\sigma(s).
  $$
  Therefore Theorem \ref{prop:der-forme-qcq} is proved, with $C_0:=-(C_1C_2)/4$. Notice that $C_0$ does not depend on $\Om$.

  \subsection{Proofs of Lemma \ref{lem:regu0} and formulas \eqref{dec-u0}, \eqref{dec-dotu0}}

  \begin{itemize}
   \item The proof of \eqref{reg-u0} follows closely the one of Theorem 5.3.2 in \cite{HenrotPierre}. The only differences come from the mixed boundary conditions and the fact that $\phi_t$ only affects horizontal variables.

  The key point is to prove that $V_t$ is differentiable with respect to $t$ with values in $H^1(\R^3_+)$. To that end, observe that $V_t$ is the solution of the elliptic problem
  \be\label{eq:Vt}
  \begin{aligned}
  -\dv(A_t \na V_t) = 0\text{ in  }\R^3_+,\\
  \pa_z V_t=f\circ \phi_t\text{ on }\Om\times \{0\},\\
  V_t=0\text{ on } \Om^c\times\{0\},
  \end{aligned}
  \ee
  where
  $$
  A_t= \det (\na \phi_t)\begin{pmatrix}
       (\na \phi_t)^{-1}(\na \phi_t^T)^{-1}&0\\
  	0&1
       \end{pmatrix}.
  $$
  (Notice that $\na \phi_t$ is a $2\times 2$ matrix.)
  
  Indeed, \eqref{eq:Vt} is easily proved by writing the variational formulation associated with the equation on $U_t$ and performing changes of variables. Since the latter are  strictly identical to the ones of \cite{HenrotPierre}, we skip the proof.
  
  For further reference, we also write the system derived by lifting the Neumann boundary condition. We set $V_t(x,z)= \tilde V_t (x,z)+ f(\phi_t(x) )\eta(\phi_t(x), z), $ where $\eta\in \mathcal C^\infty_0(\R^3)$ is such that $\eta(x,z)=z$ for $x$ in a neighbourhood of $\Om$ and $|z|\leq 1$. Then $\tilde V_t$ solves
  \be\label{eq:tV}
  \begin{aligned}
  -\dv(A_t \na \tilde V_t)=  |\det (\na \phi_t)| (\Delta (f\eta))(\phi_t(x),z) \quad\text{on }\R^3_+,\\
  \pa_z \tilde V_t=0\text{ on }\Om\times\{0\},\quad \tilde V_t=0\text{ on }\Om^c\times\{0\}.
   \end{aligned}
   \ee

  Let
  $$
  \mathcal V:= \left\{V  \in L^2_\text{loc}(\R^3_+), \ \na V\in L^2(\R^3_+),\
  V=0\text{ on } \Om^c\times\{0\}\right\}.
  $$
  Then according to the Hardy inequality in $\R^3$, there exists a constant $C_H$ such that  for all $V\in \mathcal V$, $V(1 + |x|^2 + |z|^2)^{-1/2}\in L^2(\R^3_+)$ and
  $$
  \int_{\R^3_+}\frac{|V(x,z)|^2}{1 + |x|^2 + |z|^2}\:dx\:dz\leq C_H \int_{\R^3_+} |\na V|^2.
  $$
  This inequality is usually stated in the whole space, but a simple symmetry argument shows that it remains true in the half-space. Therefore $\|\na V\|_{L^2}$ is a norm on the Hilbert space $\mathcal V$.
  
  Now, for $t$ in a neighbourhood of zero and 
  $\tilde V\in \mathcal V$, define the linear form $F(t,\tilde V)\in \mathcal V'$ by
  $$
  \forall W\in \mathcal V, \langle W, F(t,\tilde V)\rangle= \int_{\R^3_+} (A_t \na \tilde V)\cdot \na W - \int_{\R^3_+} |\det (\na \phi_t)|(\Delta (f\eta))(\phi_t(x,z)) W(x,z)\:dx\:dz.
  $$ 
  Notice that $F(t,\tilde V)=0$ is the variational formulation associated with the equation \eqref{eq:tV}. We then claim that the operator
  $$
  F:(t,\tilde V)\in \R \times \mathcal  V \mapsto F(t,\tilde V)\in \mathcal V'
  $$
  has $\mathcal C^1$ regularity for $t$ small enough. Indeed, $
  t\mapsto A_t\in L^\infty(\R^2, \mathcal M_3)
  $
  is $\mathcal C^\infty$ for $t$ in a neighbourhood of zero. On the other hand, for $(A, \tilde V)\in L^\infty(\R^2, \mathcal M_3)\times \mathcal V $, $W$, let
  $$
  a(A,\tilde V): W\in \mathcal V\mapsto \int_{\R^3_+} (A \na \tilde V)\cdot \na W - \int_{\R^3_+} |\det (\na \phi_t)|(\Delta (f\eta))(\phi_t(x,z)) W(x,z)\:dx\:dz.
  $$
  Then the application
  $$
  (A, V)\in L^\infty(\R^2, \mathcal M_3)\times \mathcal V \mapsto a(A,V)\in \mathcal V',
  $$
  has $\mathcal C^1$ regularity since it is the sum of a  bilinear and continuous function and a constant term (with respect to $A,V$).

  Let $\tilde V_0 \in \mathcal V$ be the solution of \eqref{eq:tV} for $t=0$, i.e.
  $$\begin{aligned}
  -\Delta \tilde V_0= \Delta (f\eta),\\
  \pa_z \tilde V_0=0\text{ on }\Om\times\{0\},\quad \tilde V_0=0\text{ on }\Om^c\times\{0\}.
  \end{aligned}
  $$
  Now, for $ W \in\mathcal V$,
  $
  d_V F(0,\tilde V_0)W
  $
  is the linear form
  $$
  W'\in \mathcal V \mapsto \int_{\R^3_+} \na W'\cdot\na W,
  $$
  i.e. the scalar product on $\mathcal V$. Therefore the differential
  $$
  d_V F(0,\tilde V_0):\mathcal V\to \mathcal V'
  $$
  is an isomorphism. The implicit function theorem implies that there exists a $\mathcal C^1$ function $t\mapsto \tilde V(t)$ in a neighbourhood of zero such that
  $$
  F(t,\tilde V(t))=0.
  $$
  Uniqueness for equation \eqref{eq:tV} yields $\tilde V_t=\tilde V(t)$. We infer immediately that $t\mapsto V_t$ is $\mathcal C^1$ in a neighbourhood of zero. We now use the following trace result: for any $U\in \mathcal V$,
  \be\label{trace}
  \|U\vert_{z=0}\|_{H^{1/2}(\R^2)}\leq C(\Om) \|\na U \|_{L^2(\R^3_+)}.
  \ee
  Indeed, it is well known that if $U\in \mathcal V $, the following estimates hold (with constants which do not depend on $\Om$)
  $$\begin{aligned}
  \|U\vert_{z=0}\|_{\dot H^{1/2}(\R^2)}\leq C \| \na U \|_{L^2(\R^3_+)},\\
  \|   U\vert_{z=0}\|_{L^4(\R^2)}\leq C \|\na U\|_{L^2(\R^3_+)}.
    \end{aligned}
  $$
  There only remains to prove that the $L^2$ norm of $U\vert_{z=0}$ is bounded by $\|\na U\|_{L^2}$. Since $U\vert_{z=0}=0$ on $\Om^c$, we have
  \begin{eqnarray*}
  \| U\vert_{z=0}\|_{L^2(\R^2)}&\leq & |\Om|^{1/2} \|   U\vert_{z=0}\|_{L^4(\R^2)}\\
  &\leq & C(\Om)\|\na U\|_{L^2(\R^3_+)}.
  \end{eqnarray*}
  Therefore \eqref{trace} is proved. As a consequence, $t\mapsto v_t\in H^{1/2}(\R^2)$ is  $\mathcal C^1$ in a neighbourhood of zero. The result on $\dot u_t$ follows almost immediately by differentiating the formula
  $$
  u_t=v_t\circ \phi_t^{-1}
  $$
  with respect to $t$. The only difficulty comes from the fact that $v_t$ is \textit{a priori }not smooth enough with respect to $x$ in order to use the chain rule. However, we can write
  $$
  u_t-u_0=(v_t-v_0)\circ \phi_t^{-1} + \left(v_0\circ \phi_t^{-1}-v_0\right).
  $$
  Since $\dot v_t\in \mathcal C([-\delta, \delta],H^{1/2}(\R^2))$ for $\delta>0$ small enough, the first term is $O(t)$ in $L^2(\R^2)$. As for the second one, if $v_0$ were smooth, say $v_0\in H^1(\R^2)$, then we could write
  $$
  \frac{d}{dt}v_0\circ \phi_t^{-1}= \left(\frac{d \phi_t^{-1}}{dt}\right)\cdot \na v_0\circ \phi_t^{-1}.
  $$
  It can be easily checked that the right-hand side is bounded in $H^{-1/2}$ by $C \|v_0\|_{H^{1/2}}$, where $C$ is a constant depending only on $\phi_t$. Therefore if $v_0\in H^1$, $t>0$,
  $$\|v_0\circ \phi_t^{-1}-v_0\|_{H^{-1/2}(\R^2)}\leq C t \|v_0\|_{H^{1/2}(\R^2)}.
  $$
  By density, this inequality remains true for all $v_0\in H^{1/2}$. We conclude that $\dot u_t$ belongs to $\mathcal C([-\delta, \delta], H^{-1/2}(\R^2))$.

  \item We end this section by proving the asymptotic expansions \eqref{dec-u0}, \eqref{dec-dotu0}. We apply the results of Chapter C in \cite{CoDauDu} to the function $V_t$. We start by extending $V_t$ on $\R^3\setminus \Om\times\{0\}$ by setting
  $$
  V_t(x,z)=-V_t(x,-z)\quad x\in \R^2,\ z<0.
  $$
  Without any loss of generality, we assume that the function $\eta\in \mathcal C^\infty_0(\R^3)$ defined in the proof of Lemma \ref{lem:regu0} is odd with respect to $z$.
  
  Then the extended function $V_t$ satisfies
  $$
  \begin{aligned}
  V_t=\tilde V_t +f(\phi_t(x)) \eta(\phi_t(x),z),\\
  -\dv (A_t\na \tilde V_t)=|\det (\na \phi_t)|\Delta(f \eta)(\phi_t(x),z)\quad\text{on }\R^3\setminus \Om\times\{0\}\\
  \pa_z \tilde V_t=0\quad \text{on } \Om\times\{0\}.
  \end{aligned}
  $$
By elliptic regularity, $\tilde V_t\in \mathcal C^\infty (K)$ for any compact set $K\subset  \R^3\setminus \Om\times\{0\}$ such that $K\cap \pa \Om=\emptyset$.
  For every connected component $\Gamma_i$ of $\pa\Om$, we introduce a truncation function $\varphi_i\in \mathcal C^\infty_0(\R^2)$ such that 
  $$\begin{aligned}
  \varphi_i\equiv 1\text{ on a neighbourhood of }\Gamma_i,\\
  \Supp \varphi_i\cap (\pa\Om\setminus \Gamma_i)=\emptyset.
   \end{aligned}
  $$
  Since the support of $\na \varphi_i$ is separated from the zones where $\tilde V_t$ has singularities, we infer that for all $t$, $\varphi_i(x) \tilde V_t (x,z)$ solves a boundary value problem 
  which is elliptic of order two in the sense of Agmon, Douglis, Nirenberg (see \cite{ADN1}, \cite{ADN2} and the definitions in chapter 7 of \cite{Dauge}), with homogeneous boundary conditions of order one and with $\mathcal C^\infty$ right-hand side. Moreover, we have proved in Lemma \ref{lem:regu0} that $V_t\in H^1_{loc}(\R^3\setminus \Om\times\{0\})$. Therefore we can apply Corollary C.6.5 in \cite{CoDauDu}. As in paragraph \ref{ssec:regul-flap}, we denote by $(r,\theta)$ polar coordinates in the planes normal to $\Gamma_i$ and centered on $\Gamma_i$, and by $s$ the arc-length on $\Gamma_i$,   so that
  $$
  \R^3\setminus( \Supp \varphi_i \cap \Omega \times\{0\})=W_\pi=\{(s,r,\theta),\ s\in (0,L_i),\ r>0,\ \theta\in(-\pi,\pi) \}.
  $$
  We deduce that for all $i$, there exist functions $\tilde \psi_i^0(t; s, \theta)$, $\tilde\psi_i^1(t;s,\theta)$, which are smooth with respect to $s,\theta\in [-\pi,\pi]$, such that
  \be\label{dec:Vt}
  \varphi_i(x)\tilde V_t(x,z)=\tilde \psi_i^0(t; s, \theta) r^{1/2} + \tilde\psi_i^1(t; s, \theta) r^{3/2}+ u_{reg}(t;x,z)+ u_{rem}(t;x,z),
  \ee
  with $u_{reg}(t;\cdot) \in \mathcal C^\infty(\R^3)$ for all $t$ and $u_{rem}(t;\cdot)\in \mathcal C^2(\overline{ W_\pi})$ with $\pa^\beta u_{rem}(t;x,z)=o(r^{3/2-|\beta|})$ for all $t$ and for any multi-index $\beta$. Setting
  $$
  V^1_i(t;x,z):= \tilde\psi_i^1(t; s, \theta) r^{3/2}+ u_{reg}(t;x,z)+ u_{rem}(t;x,z) + \eta(\phi_t(x),z),
  $$
  we have $V^1_i\in \mathcal C^1(\overline{ W_\pi})$,  $V^1_i(t;x,z)=O(r)$ for all $t$, and 
  $$
  V_t(x,z)=\tilde \psi^0(t; s, \theta) r^{1/2} + V^1_i(t;x,z)
  $$
  for $x$ in a neighbourhood of $\Gamma_i$.
   Since 
  $$
  u_0(x)=\lim_{z\to 0, z>0} V_0(x,z),
  $$
  we obtain decomposition \eqref{dec-u0} with 
  $$
  \psi^0(s):=\tilde \psi^0_i(0,s,\pi)\text{ for }s\in \Gamma_i,\\
  u_1(x)=\lim_{z\to 0^+}V^1_i(0;x,z)\text{ for }x\in \Supp \varphi_i.
  $$

  Differentiating  \eqref{dec-dotu0} requires regularity results with respect to $t$ on the terms of the decomposition \eqref{dec:Vt}. Using for instance Theorem C.6.2 in \cite{CoDauDu}, or looking precisely at the details of the proof in section C of \cite{CoDauDu}, it is easily proved that the functions $\tilde \psi^0$, $\tilde \psi^1$, $u_{reg}$  and $u_{rem}$ are differentiable with respect to $t$, and that their $t$-derivatives are smooth with respect to $x$. Denoting by $s(x)$, $r(x)$ the local coordinates of a point $x\in \R^2$, we have
  $$
  u_t(x)=\tilde \psi^0(t,s(\phi_t^{-1}(x)), \pi) (r(\phi_t^{-1}(x)))^{1/2} + V^1(t, \phi_t^{-1}(x), 0^+)
  $$
  so that
  \begin{eqnarray}
  \nonumber\dot u_t&=&\left(\pa_t \tilde \psi^0 + \pa_t(\phi^{-1}_t (x))\cdot \na s(\phi_t^{-1}(x))\pa_s \tilde\psi^0\right)(t,s(\phi_t^{-1}(x)), \pi) (r(\phi_t^{-1}(x)))^{1/2} \\
  \label{reg:dut}&&+\frac{1}{2} \pa_t(\phi^{-1}_t (x))\cdot\na r (\phi_t^{-1}(x))\frac{1}{r(\phi_t^{-1}(x))^{1/2}} \\
  \nonumber&&+ \pa_t V^1(t, \phi_t^{-1}(x), 0^+) +  \pa_t(\phi^{-1}_t (x))\cdot\na V^1(t, \phi_t^{-1}(x), 0^+).
  \end{eqnarray}
  Therefore $\dot u_t \in L^\infty_t(L^1_x)$, which proves \eqref{reg:duO-L1}.
  We recall that $\phi_0=\phi_0^{-1}=\mathrm{Id}$, and that $\zeta= \dot \phi_0$. Notice also that since $n$ is the outward pointing normal, $\na r =-n$. We infer 
  $$
  \du=\frac{\zeta \cdot n}{2 \sqrt{r}} + \psi_2(s)\sqrt{r}+ u_2(x),
  $$
  where 
  $$
  \begin{aligned}
  \psi_2(s) =\left(\pa_t \tilde \psi^0 +\zeta\cdot\tau \pa_s \tilde\psi^0\right)(0,s, \pi),\\
  u_2(x)=\pa_t V^1(0, x, 0^+) + \zeta\cdot\na V^1(0, x, 0^+).
  \end{aligned}
  $$
  Thus \eqref{dec-dotu0} is proved.
  
  \end{itemize}

\section{Radial symmetry by the moving plane method}  \label{sec:hyp-mobiles} 
The aim of this section is to prove Theorem \ref{thm:hyp-mobiles}.
This theorem extends the well-known theorem of Serrin \cite{Serrin} on the classical laplacian, for which \eqref{overdetermined} is replaced by
\begin{equation} \label{overdetermined2}
\left\{
\begin{aligned}
-\Delta u = 1, \quad & x \in \Omega, \\
 u = 0, \quad & x \in \pa \Omega, \\
 \pa_n u = c_0, \quad & x \in \pa \Omega 
\end{aligned}
\right.
\end{equation}
The proof of Serrin uses the celebrated moving plane method, and we will adapt it to our fractional setting. 

\subsection{Reminders on the moving plane method} \label{reminders}
We remind here the main arguments of Serrin's proof. The starting point is the introduction of a family of hyperplanes (lines in our 2d case), say $ H_\la := \{ x \in \R^2, x_1 = \la\}$, parametrized by $\la \in \R$. We also define, for any function $u$ defined on $\R^2$, $R_\lambda u(x_1, x_2) \: := \: u(2\la -x_1, x_2)$ the  reflection of $u$ with respect to $H_\la$. 
For $\la$ small enough, $H_\la$ does not intersect the domain $\Omega$. Increasing $\la$, that is moving $H_\la$ from left to right, one reaches a first contact position, corresponding to 
$$ \la_0 := \inf \{\la, H_\la\cap \Omega \neq \emptyset\}. $$
Up to a translation, we can always assume that $\la_0 = 0$. Thus, for $\la >  0$, we can consider the  cap 
$ \Si_\la := \Omega \cap \{x_1 < \la\}$
and its reflection $\Si'_\la$ with respect to $H_\la$. Note that, for $\la > 0$ small enough, $\Si'_\la$ is non-empty and included in $\Omega$. As $\la$ increases, it remains included in $\Omega$ at least until one of the following two geometric configurations is reached (see Figure \ref{fig:configurations}):   
\begin{enumerate}
\item $\Si'_{\la}$ is internally tangent to the boundary of $\Omega$ at some point $P$ not on $H_{\la}$. 
\item $H_{\la}$ is orthogonal to the boundary of $\Omega$ at some point $Q$. 
\end{enumerate}
\begin{figure}
\begin{tabular}{cc}
 \includegraphics[height=9.5cm]{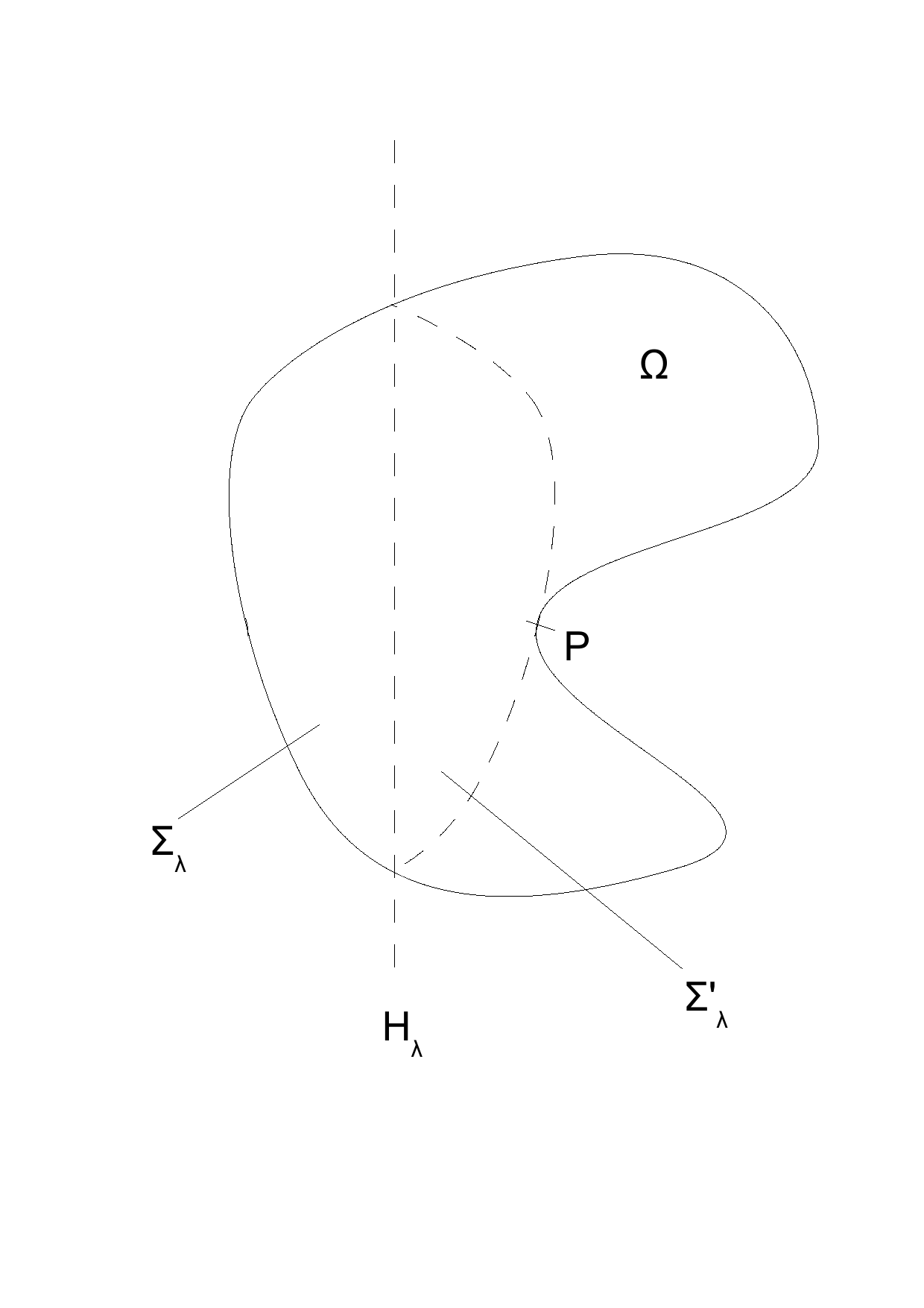}& \includegraphics[height=9.5cm]{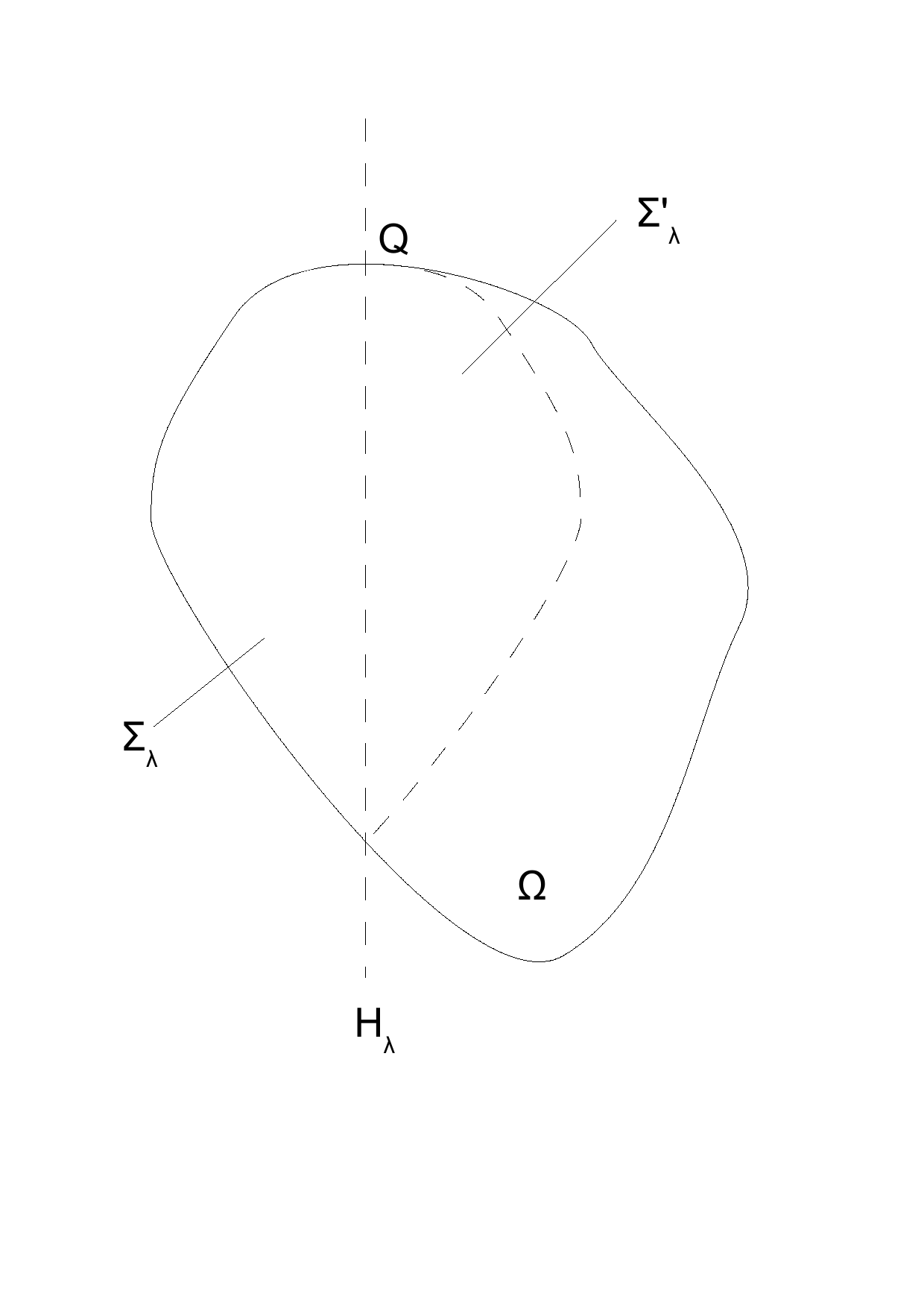}
\end{tabular}
\caption{Configuration 1 (on the left) and configuration 2 (on the right).} 
\label{fig:configurations}
\end{figure}

Let $\La$ be the first value of $\la$ for which configuration 1 or 2 holds. Note that we may have $\Si'_\la\subset \Om$ for some $\la>\La$, but this will be irrelevant in the proof.
The main point in Serrin's proof is to show that one has  $ R_\La u = u $ inside $\Si_\La$, where $u$ is the solution of \eqref{overdetermined2}. This fact  yields easily the symmetry of $\Omega$ with respect to $H_{\La}$. As the direction of $H_{\La}$  was chosen arbitrarily, it will follow that for any direction, there is an axis of symmetry  for $\Omega$ with that direction. This property implies  that $\Omega$ is a disk.   

\medskip
To show the identity $ R_\La u = u $  inside $\Si_\La$, one argues by contradiction through  three main steps. Assume that the equality does not hold.  Then
\begin{itemize}
\item {\em Step 1}.  One shows that the function $w:= u - R_\La u$ is  positive inside $\Si'_\La$. 
\item {\em Step 2}. One obtains an upper bound for $w$ near the tangency point $P$ (configuration 1) or $Q$ (configuration 2). As the normal derivative of $u$ is constant along $\pa \Omega$, one has $w = \pa_n w = 0$ at $P$ or $Q$. For configuration 2, one can  show furthermore that the second derivatives cancel at $Q$: $\pa_{ij} w = 0$ for all $i,j=1,2$. Denoting by $r$ the distance at the tangency point, these properties  imply that 
\begin{equation} \label{DL}
 w = O(r^2) \:  \mbox{ at $P$ (configuration 1), or } \: w = O(r^3)   \:  \mbox{ at $Q$ (configuration 2).}
\end{equation}
\item {\em Step 3}. One shows a lower bound which contradicts \eqref{DL}. For configuration 1, this lower bound is obtained easily.  Indeed, one knows from Step 1 that  $w > 0$  inside $\Si'_\La$. Hence, Hopf's lemma implies 
$\pa_n w  > 0$ at $P$ (where $n$ is the inward normal), which shows that $w$ should grow at least linearly with $r$ inside the domain.   For configuration 2, $\Si'_\La$ is not regular enough at $Q$ to apply Hopf's lemma. However,  one can still  show that 
$$  \pa_s w > 0 \:  \mbox{ or } \: \pa^2_s w > 0 \: \mbox{ at } \: Q  $$
for any direction $\vec{s}$ entering $\Si'_\La$ non-tangentially: see \cite[Lemma 1]{Serrin}. It follows that $w$ should grow at least like $r^2$
 inside the domain. In both cases, we reach the targeted contradiction. We refer to \cite{Serrin} for all details. \end{itemize}
  
 \medskip 
 Our ambition is to transpose this scheme of proof to problem \eqref{overdetermined}. Due to the nonlocal character of $\flap$, it  requires many modifications:   
 \begin{itemize}
 \item In the case of the Laplacian, Step 1 follows from a simple application of the strong maximum principle, as $w \ge 0$ on $\pa \Si'_\La$. In our fractional (and therefore nonlocal) setting, such use of  the maximum principle is impossible: it would require that $w \ge 0$ on the whole $\R^2\setminus  \Si'_\La$, which is not true ($w$ is an odd function). The appropriate treatment of Step 1 will be addressed in paragraph \ref{positivity}. 
 \item In the case of the Laplacian, Steps 2 and 3 rely on the regularity of $u$ up to the boundary. In the case of $\flap$, we only have $C^{0,1/2}$ regularity of $u$, which implies substantial changes. Loosely: 
 \begin{itemize}
 \item  the upper bounds will follow from the asymptotic expansion \eqref{asymptoticexp2} of $u$ near the boundary.
 \item the lower bounds will follow from the construction of refined subsolutions. 
 \end{itemize}
 \end{itemize}

Additionally, we emphasize that the constant $c_0$ in \eqref{overdetermined} is necessarily strictly positive. Indeed, $c_0\geq 0$ by the maximum principle. The fact that $c_0>0$ is a consequence of the Hopf Lemma for the fractional laplacian. In the present context, we may present a self-contained proof: if $c_0=0$, we  jump to paragraph \ref{ssec:config1}. The arguments developed there (for the function $w$ instead of $u$) allow us to conclude that $u\equiv 0$. This is excluded by the equation $(-\Delta)^{1/2} u=1$. Therefore we restrict our analysis to $c_0>0$.
 
 \subsection{Positivity of $w$} \label{positivity}
 We shall first prove that if non-identically $0$, $\check{w} := R_\La u - u$ is positive inside $\Si_\La$ (which amounts to achieving Step 1).
We shall rely on ideas developed by M. Birkner, J. Lopez-Mimbela and A. Wakolbinger in article \cite{BiLoWa}. They show there the radial symmetry of solutions of some semilinear fractional problems  
$$ (-\Delta)^\alpha u  + F(u)  =  0, \quad x \in B(0,1), \quad u = 0 \text{ in } \R^n\setminus B(0,1)$$
set in the unit ball $B(0,1)$ of $\R^n$. Their proof is based on an adaptation of the moving plane method, and its baseline can be used  to show  the positivity of $w$. Nevertheless, several changes are needed, and simplifications of the original arguments can be made, as we now describe.

\medskip
First of all, we introduce the set 
$$ {\cal L} \: := \: \{ \la \in ]0, \La[, \quad  s.t. \quad  \forall 0 <  \gamma \le \la, \quad R_\gamma u - u \ge 0 \:  \mbox{ on } \: \Si_\gamma, \:  \mbox{ and } \: \pa_1 u > 0 \:  \mbox{ on } \:  H_\gamma \cap \Omega \}. $$
The main point is to show that ${\cal L} = ]0,\La[$. We proceed in several steps:
\begin{itemize}
\item {\em Statement 1: $ \: ]0, \eps] \subset {\cal L}$ for $\eps > 0$ small enough}. 
\end{itemize}
To prove that $]0, \eps[ \subset {\cal L}$, it is enough to show that $\pa_1u > 0$ for all $x \in \Omega$ with $0 < x_1 < \eps$. From the condition $\pa_n^{1/2} u = c_0 > 0$, and the expansion  \eqref{asymptoticexp}, we know that 
$$ \pa_r u \: = \: \frac{1}{2} c_0 \, r^{-1/2} \: + \: O(1), \quad   \pa_s u \: = \: O(1) $$
as the distance $r$ to the boundary (measured inside the domain) goes to zero. It follows that 
$$ \pa_1 u \: = \: \pa_1 r \, \pa_r u \: + \: \pa_1 s \, \pa_s u \: = \:    \frac{1}{2} c_0 \,  (\pa_1 r) \, r^{-1/2} + O(1), \quad r \rightarrow 0.$$
Moreover,   for $0 < \la  < \La$, for all inward normal vectors $n$ along $\pa \Omega \cap \{ x_1 < \la \}$, one has $e_1 \cdot n > 0$, where $e_1 = (1,0)$. It follows that  $\pa_1 r$ has a positive lower bound for $0 < x_1 < \eps$ small enough. The statement follows.

\begin{itemize}
\item {\em Statement 2: $\: {\cal L}$ is an open subset of $]0, \La[$. }
\end{itemize}
The point is to show that if $ \la \in {\cal L}$, $[\la, \la + \eps [ \subset  {\cal L}$ for $\eps > 0$ small enough. This statement will be deduced from  the following: 
\begin{lemma} \label{lemma positivity}
Let $\la < \La$. Then

\medskip 
i) For all $x \in \pa \Omega \cap \{ x_1 < \la \}$, $R_\la u(x) - u(x) > 0$. 

\medskip
ii) Assume that $R_\la u - u \ge 0$ on $\Si_\la$. Then, $R_\la u - u > 0$  on $\Si_\la$. In particular, one has $R_\la u - u > 0$  on $\Si_\la$ for any $\la \in {\cal L}$. 
\end{lemma}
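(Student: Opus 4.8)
\emph{Proposed proof.} The plan is to reduce everything to the three-dimensional harmonic extension of $u$ and the maximum principle. Write $\check w_\la:=R_\la u-u$, let $U$ be the harmonic extension of $u$ to $\R^3_+$, and set $W:=R_\la U-U$ (the reflection acting on the $x_1$ variable only), so that $W\vert_{z=0}=\check w_\la$. I would record two preliminaries. First, $U$ is bounded and $\to 0$ at infinity, since $u$ is bounded with support in $\overline\Om$; moreover $U\ge 0$ by the maximum principle, $U\vert_{z=0}=0$ on $\Om^c\times\{0\}$, and $-\pa_z U\vert_{z=0}=1$ on $\Om\times\{0\}$. Second, $u>0$ throughout the open set $\Om$: indeed $u$ is smooth in $\Om$ by interior regularity and $u\not\equiv 0$ (because $\flap u=1$), so $U$ is a nonnegative nonconstant harmonic function on the connected set $\R^3_+$ and hence $U>0$ there by the strong maximum principle; if $U(x_0,0)=0$ at some $x_0\in\Om$, the one-sided derivative would satisfy $\pa_z U(x_0,0)\ge 0$, contradicting $\pa_z U(x_0,0)=-1$.

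For (i): if $x\in\pa\Om$ with $x_1<\la$, then $u(x)=0$, so $\check w_\la(x)=u(R_\la x)$ and it suffices to show $R_\la x\in\Om$. Since $\la<\La$ we have $\Si'_\la\subset\Om$, hence $R_\la(\pa\Om\cap\{x_1<\la\})\subset\overline{\Si'_\la}\subset\overline\Om$; as configuration 1 is not yet reached, this reflected arc meets $\pa\Om$ only on $H_\la$, and $x_1<\la$ forbids $R_\la x\in H_\la$, so $R_\la x\in\Om$ and $\check w_\la(x)=u(R_\la x)>0$. For (ii), assume $\check w_\la\ge 0$ on $\Si_\la$. I would first upgrade this to $\check w_\la\ge 0$ on all of $\{x_1<\la\}\times\{0\}$: on $\Om^c\cap\{x_1<\la\}$ one has $u=0$, so $\check w_\la=u(R_\la\,\cdot)\ge 0$ since $u\ge 0$ everywhere. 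Then $W$ is harmonic on the quarter-space $Q_\la^+:=\{x_1<\la,\,z>0\}$, it vanishes on $\{x_1=\la,\,z\ge 0\}$, it equals $\check w_\la\ge 0$ on $\{x_1<\la\}\times\{0\}$, it decays at infinity, and it satisfies $\pa_z W=0$ on $\Si_\la\times\{0\}$ — this last point because for $x\in\Si_\la$ both $x$ and $R_\la x$ lie in $\Om$ (using $\Si'_\la\subset\Om$), so $-\pa_z U=-\pa_z(R_\la U)=1$ at $(x,0)$. The maximum principle on $Q_\la^+$, legitimate since $W$ is continuous up to the boundary, nonnegative there, bounded, and $\to 0$ at infinity, yields $W\ge 0$ on $Q_\la^+$, hence $\check w_\la\ge 0$ on $\Si_\la$ by continuity.

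Now suppose $\check w_\la(x_0)=0$ for some $x_0\in\Si_\la$. Near $(x_0,0)$, since $x_0\in\Om$, $u$ and hence $W$ is smooth up to $\Om\times\{0\}$, where $\pa_z W=0$; reflecting $W$ evenly across $\{z=0\}$ gives a nonnegative harmonic function on a full three-dimensional ball about $(x_0,0)$ vanishing at its centre, so $W\equiv 0$ near $(x_0,0)$ by the strong maximum principle, and then $W\equiv 0$ on $Q_\la^+$ and on all of $\R^3_+$ by unique continuation for harmonic functions. Taking traces, $u$ — and therefore $\Om=\{u>0\}$ — is symmetric about $H_\la$. This contradicts $\la<\La$: a domain symmetric about $H_{\la_*}$ satisfies $\La\le\la_*$, because, normalizing $\inf_\Om x_1=0$ so that $\sup_\Om x_1=2\la_*$, for any $\mu>\la_*$ a point $p\in\Om$ with $0<p_1<2(\mu-\la_*)$ reflects across $H_\mu$ to a point of $\Si'_\mu$ with first coordinate exceeding $\sup_\Om x_1$, so $\Si'_\mu\not\subset\Om$. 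Hence $\check w_\la>0$ on $\Si_\la$; and if $\la\in\mathcal{L}$, then $R_\la u-u\ge 0$ on $\Si_\la$ by the definition of $\mathcal{L}$, so the above applies and gives the final assertion.

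\emph{Main obstacle.} I expect the delicate point to be the passage to the three-dimensional extension: checking that $W$ solves a genuinely homogeneous mixed Dirichlet--Neumann problem on $Q_\la^+$ — in particular that its Neumann datum on $\Si_\la\times\{0\}$ vanishes, which is exactly what permits the even reflection across $\{z=0\}$ and the use of the interior strong maximum principle — and justifying the maximum principle on the unbounded domain $Q_\la^+$ via the decay of $U$ and the continuity of $W$ up to the singular boundary points where $u\sim\sqrt r$. By comparison, the geometric incompatibility of symmetry about $H_\la$ with $\la<\La$ is elementary.
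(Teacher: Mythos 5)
Your proof is correct, and it shares the paper's overall framework (three-dimensional harmonic extension, the Neumann condition $\pa_z \check{W}=0$ on $\Si_\la\times\{0\}$ coming from $\flap u=1$ in $\Om$, and the extension of $\check{w}\ge 0$ from $\Si_\la$ to the whole half-plane $\{x_1<\la\}$), but the mechanism for strict positivity in (ii) is genuinely different. The paper represents $\check{W}$ by the Poisson kernel and uses the antisymmetry $R_\la\check{w}=-\check{w}$ to fold the integral onto $\{t_1<\la\}$, where the difference of the two kernels is pointwise positive; since $\check{w}\ge0$ there and is not identically zero (by part (i) and continuity), this gives $\check{W}>0$ throughout the open quarter-space in one stroke, and a hypothetical zero $x^*\in\Si_\la$ is then dispatched by Hopf's lemma, which would force $\pa_z\check{W}(x^*,0)>0$ against the Neumann condition. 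You obtain only $\check{W}\ge0$ from the maximum principle on the quarter-space, and instead propagate a hypothetical interior zero: even reflection across $\{z=0\}$ (legitimate thanks to that same Neumann condition), the strong minimum principle on a full ball, and real-analyticity force $\check{W}\equiv0$, hence symmetry of $u$ and of $\Om=\{u>0\}$ about $H_\la$, which you then exclude geometrically for $\la<\La$. Both routes are sound. The paper's kernel computation buys strict positivity of $\check{W}$ directly and keeps the contradiction purely local (no unique continuation, no geometric endgame); yours avoids the explicit kernel formula and stays closer to a pure maximum-principle argument. One simplification of your endgame: $\check{W}\equiv0$ gives $\check{w}\equiv0$ on $\R^2$, which already contradicts part (i) for $\la\in\,]0,\La[$ — and this variant, unlike the geometric exclusion, also transfers verbatim to the case $\la=\La$, where the paper reuses the proof of (ii) at the end of its Section 4.2.
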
 
 \begin{proof}We remind that $u > 0$ in $\Omega$ by the maximum principle.
 
\medskip
 i)  As $\la < \La$, for all  $x= (x_1,x_2) \in \pa \Omega \cap \{ x_1 < \la \}$, $x_\la := (2\la - x_1,x_2) \in \Omega$. In particular:
  $$R_\la u(x) - u(x) = u(x_\la) > 0. $$
  
  \medskip
  ii) We assume that $\check{w}:= R_\la u - u \ge 0$ on $\Si_\la$. Note that  on $\{ x_1 < \la\}\setminus \Si_\la$, one has $\check{w} = R_\la u \ge 0$, so that $\check{w} \ge 0$ on the whole half-space $\{ x_1 < \la\}$. Note also that  by i) and the continuity of $u$, the fonction $\check{w}$ is not identically zero in $\Si_\la$. We want to show that $\check{w} > 0$ in $\Si_\la$. We introduce the harmonic extension $\check{W}$ of $\check{w}$, defined on $\R^3_+$. We remind that 
  $$ \Delta  \check{W}= 0 \: \mbox{ on } \:  \R^3_+, \quad \check{W} \vert_{z = 0} = \check{w} \: \mbox{ on } \: \R^2. $$
  Moreover, as $\flap u = 1$ in $\Omega$,  $\pa_z \check{W}\vert_{z = 0}  = 0$ in $\Si_\la$. 
 We can express  $\check{W}$ with  the Poisson kernel: 
  $$ \check{W}(x_1,x_2,z) = \frac{z}{2\pi}\int_{\R^2} \frac{dt_1 dt_2}{((x_1 - t_1)^2 + (x_2 - t_2)^2 + z^2)^{1/2}} \check{w}(t_1,t_2). $$  
  As $R_\la \check{w} = -\check{w}$, this integral formula can be written
\begin{multline*} \check{W}(x_1,x_2,z) = \frac{z}{2\pi}\int_{\{ t_1 < \la \}} dt_1 dt_2 \\
\left( \frac{1}{((x_1 - t_1)^2 + (x_2 - t_2)^2 + z^2)^{1/2}} - \frac{1}{(x_1 + (t_1-2 \la))^2 + (x_2 - t_2)^2 + z^2)^{1/2}} \right)\check{w}(t_1,t_2). 
\end{multline*}
 As $\check{w} \ge 0$   on $\{ t_1 < \la \}$ and not identically zero, we deduce from this formula that $\check{W} > 0$ for $x_1 < \la$, $z > 0$. 
 
 \medskip
 To conclude on the positivity of $\check{w}$, we assume {\it a contrario} that $\check{w}(x^*) = 0$ for some $x^* = (x_1,x_2) \in \Si_\la$. The  function $\check{W}$ is smooth up  to the boundary in the vicinity of $(x_1^*,x_2^*,0)$, and $0= \check{W}(x^*_1,x^*_2,0) < \check{W}(x,z)$ for all $(x,z) \in \R^3_+$.  By Hopf's lemma, we should have $\pa_z \check{W}(x^*_1,x^*_2,0) > 0$ hence reaching a contradiction.  This concludes the proof. 

 \end{proof}

Back to Statement 2: we argue by contradiction, as in \cite[paragraph 4.2]{BiLoWa}. We assume that there is $\la \in {\cal L}$ and a decreasing sequence $(\la_n) \subset ]0,\La[ \setminus {\cal L}$ converging to $\la$. Up to the extraction of a subsequence, we can assume that there exists a sequence $(\gamma_n)$ such that $\lambda<\gamma_n<\lambda_n$ and 
\begin{enumerate}
\item either there exists a sequence $(x_n)$, $x_n \in  \Si_{\gamma_n}$ for all $n$, such that $x_n \rightarrow x^*$ and $u(x_n) > R_{\gamma_n}  u(x_n)$ for all $n$. 
\item or there exists a sequence $(x_n)$, $x_n \in  H_{\gamma_n} \cap \Omega$ for all $n$, such that $x_n \rightarrow x^*$ and $\pa_1 u(x_n) \le 0$ for all $n$. 
\end{enumerate}

\medskip
We first consider case 1: since $\lambda \in \cal L$, we have $R_\la u-u\geq 0$ on $\Sigma_\la$. As $u$ is continuous, we get $u(x^*) - R_{\la}u( x^*) \ge 0$. By  Lemma \ref{lemma positivity}, we cannot have $x^* \in \Si_\la\cup (\pa \Om\cap \{x_1<\lambda\})$. Hence, $x^* \in H_\la \cap \overline{\Omega}$. If $x^* \in H_\la \cap \pa \Omega$, one has 
$ \pa_n^{1/2} u(x^*) = c_0 > 0$. Moreover, as $\la < \La$, $e_1 \cdot n^* > 0$ for $n^*$  the inward normal vector at $x^*$: indeed, since $\la<\La$, configuration 2 has not been met, and therefore $e_1\cdot n(y)$ does not vanish for all $y\in \pa \Om$ such that $y_1\leq \la$. Since $e_1\cdot n(y)=1$ when $y=(0,y_2)\in \pa \Om$, we infer $e_1 \cdot n^* > 0$.

  Reasoning as in the proof of Statement 1, we obtain that $u$ is strictly increasing with $x_1$ in the vicinity of $x^*$ (inside the domain). Hence, for $n$ large enough, we get 
$$ R_{\gamma_n}u( x_n) - u(x_n) > 0$$
in contradiction with the assumption $u(x_n) > R_{\gamma_n} u(x_n)$ for all $n$. If  $x^* \in H_\la \cap \Omega$, then $\pa_1 u(x^*) > 0$ (because $\la \in {\cal L})$. This means again that $u$ is strictly increasing with $x_1$ in the vicinity of $x^*$, which yields the same contradiction as before. 
 
 \medskip
It remains to consider case 2. With the same reasoning as in case 1, we get that $x^* \in H_\la \cap \overline{\Omega}$, and that $u$ is strictly increasing with $x_1$ in the vicinity of $x^*$. This contradicts the assumption $\pa_1 u(x_n) \le 0$ for all $n$.

 \begin{itemize}
\item {\em Statement 3: $\: {\cal L} = ]0, \La[$. }
\end{itemize}
From the previous statements on ${\cal L}$, we know that ${\cal L} = ]0, \La_{max}[$ for some $\La_{max} \le \La$.  Again, we shall argue by contradiction and assume that $\La_{max}< \La$. For all $\la < \La_{max}$, one has $R_{\la} u - u > 0$ in $\Si_\la$, so that by continuity of $u$,  $\: R_{\La_{max}} u - u \ge 0$ in $\Si_{\La_{max}}$.  Lemma \ref{lemma positivity} implies in turn that
$$ R_{\La_{max}} u - u >  0  \quad \mbox{ in } \: \Si_{\La_{max}}. $$
We want to show that $\pa_1 u > 0$ on $H_{\La_{max}} \cap \Omega$. This would imply that $\La_{max} \in {\cal L}$,  so the contradiction. 

\medskip
As $\check{w} := R_{\La_{max}} u - u$ is odd with respect to $H_{\La_{max}}$, it is enough to prove that $\pa_1 \check{w} < 0$ on $H_{\La_{max}}\cap \Om$. We introduce its harmonic extension, as in the proof of Lemma \ref{lemma positivity}: it satisfies  
$$ \Delta \check{W}  = 0 \: \mbox{ on } \:  \R^3_+, \quad \check{W} \vert_{z = 0} = \check{w} \: \mbox{ on } \: \R^2, \quad \pa_z \check{W} \vert_{z = 0}  = 0 \: \mbox{ on } \: 
\Si_{\La_{max}}.$$ 
We extend $\check{W} $ to the lower half-space by setting
$$ \check{W} (x_1,x_2,z) \: := \: \check{W} (x_1,x_2,-z) \quad \mbox{ for } \: z < 0. $$
As $\pa_z \check{W} \vert_{z = 0}  = 0$ on $\Si_{\La_{max}}\times \{0\}$, this extension is harmonic through $\Si_{\La_{max}}$, that is outside $(\R^2\setminus\Si_{\La_{max}}) \times \{0\}$. 
Notice also that as in the proof of Lemma \ref{lemma positivity}, $\check{w}\geq 0$ on $\{x_1<\La_{max}\}$, and therefore $\check{W}\geq 0$ on $\{x_1<\La_{max}\}$.
Let $x^* \in  H_{\La_{max}} \cap \Omega$. $\check{W} $ is smooth and harmonic near $(x^*_1, x^*_2, 0)$. Moreover, one has 
$$ 0  = \check{w}(x^*) = \check{W} (x^*_1, x^*_2, 0) < \check{W} (x,z), \quad \mbox{ say for all } \:  (x,z) \in {\cal O} := \Si_{\La_{max}} \times \R $$ 
Applying Hopf's lemma in ${\cal O}$  (for which $(x^*_1, x^*_2, 0)$ is a boundary point), we obtain that 
$ \pa_1 \check{W} (x^*_1, x^*_2, 0) < 0$
that is $\pa_1  \check{w}(x^*) < 0$, as expected. 

 \bigskip
 As a conclusion of our analysis, we obtain that $ R_{\la} u - u > 0$ on $\Si_\la$ for all $\la < \La$.  By continuity of $u$, we obtain that 
 $  \check{w}:= R_{\La} u - u \ge 0$ on $\Si_{\La}$. From there, there are two possibilities: 
 \begin{itemize}
 \item either  $ \check{w}=0$  on $\Si_{\La}$. It follows easily that $\Omega$ is symmetric with respect to $H_\La$. As explained in paragraph \ref{reminders}, the direction of $H_\La$ being arbitrary, it follows that $\Omega$ is a disk. 
 \item or $ \check{w}(x) > 0$ for some point $x$ in $\Si_\La$.  But then, following exactly the proof of point ii) in Lemma \ref{lemma positivity}, we obtain that  $ \check{w}  > 0$ on $\Si_\La$ (or $w := u - R_\La u > 0$ on $\Si'_\La$).  
 \end{itemize}
 The rest of the section aims at excluding the second possibility. {\em From now on, we  assume that  $w  > 0$ on $\Si'_\La$, and will establish contradictory lower and upper bounds on $w$.} 
 
 \medskip
 First of all, we state a lemma, to be used in both configurations 1 and 2. 
\begin{lemma} \label{lemmex1x3}
Let $W$ be the harmonic extension of $w$ to $\R^3_+$. For all $R > 0$ sufficiently large there exists $c_R > 0$ such that 
$$ W(x,z) \: \ge \:  c_R  \, (x_1 - \La) \, z, \quad \mbox{ for all } \: \La \le x_1 \le R, \quad   -R \le x_2 \le R,  \quad   0 \le z \le R.  $$
 \end{lemma}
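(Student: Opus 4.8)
The plan is to use the Poisson-kernel representation of $W$, exactly as in the proof of Lemma \ref{lemma positivity}, together with the strict positivity of $w$ on $\Si'_\La$ established just above. Recall that $w := u - R_\La u$ is odd with respect to $H_\La$, and that $w > 0$ on $\Si'_\La = R_\La(\Si_\La) \subset \Omega$, while $w = -R_\La u \le 0$ on $\{x_1 > \La\}\setminus \Si'_\La$. Since $w$ is odd, its harmonic extension $W$ is odd in the $x_1$-direction across $H_\La$, and antisymmetrizing the Poisson formula gives, for $x_1 > \La$ and $z>0$,
$$
W(x_1,x_2,z) = \frac{z}{2\pi}\int_{\{t_1 > \La\}} \left(\frac{1}{(|x_1-t_1|^2 + |x_2-t_2|^2 + z^2)^{1/2}} - \frac{1}{(|x_1 + t_1 - 2\La|^2 + |x_2-t_2|^2 + z^2)^{1/2}}\right) w(t_1,t_2)\,dt_1\,dt_2.
$$
The kernel in parentheses is strictly positive whenever $t_1 > \La$ and $x_1 > \La$ (the first denominator is strictly smaller), so the contribution of the region $\{t_1 > \La\}$ is $\ge 0$; but $w$ is only nonnegative on $\Si'_\La$, so I would split the integral and control the (nonpositive) part coming from $\{t_1 > \La\}\setminus\Si'_\La$ separately — or, more cleanly, observe that $w(t_1,t_2) = u(t_1,t_2) - u(2\La - t_1, t_2)$ and that $|w|$ is bounded (since $u$ is bounded, being $C^{0,1/2}$ and compactly supported), so that the "bad" contribution is bounded by a fixed harmonic function vanishing on $H_\La$.

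The key steps, in order, are as follows. First, fix $R>0$ large enough that $\Omega \subset B(0,R/2)$, and pick a relatively compact subdomain $K \Subset \Si'_\La$ on which $w \ge \eta_0 > 0$ for some constant $\eta_0$ (possible since $w$ is continuous and strictly positive on the open set $\Si'_\La$). Second, from the antisymmetrized Poisson formula, bound below the contribution of $K$: for $(x,z)$ in the compact box $\{\La \le x_1 \le R,\ |x_2|\le R,\ 0\le z\le R\}$, the kernel difference evaluated at $t \in K$ is bounded below by $c\,(x_1-\La)$ for a constant $c>0$ depending only on $R$ and $\mathrm{dist}(K,H_\La)$ — this is the elementary estimate $\frac{1}{a} - \frac{1}{b} \ge c\,(b-a)$ for $a,b$ ranging in a fixed bounded interval bounded away from $0$, combined with $b^2 - a^2 = 4(x_1-\La)(t_1 - \La) \ge 4(x_1 - \La)\,\mathrm{dist}(K,H_\La)$. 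This yields a lower bound $W \ge c_R' (x_1-\La)\, z\, \eta_0 |K|$ from the region $K$. Third, estimate the remaining contributions: the region $\{t_1>\La\}\cap\Si'_\La \setminus K$ contributes something $\ge 0$ (positive kernel, $w\ge 0$ there), and the region $\{t_1 > \La\}\setminus \Si'_\La$ where $w \le 0$ contributes a term bounded below by $-C z \int_{\{t_1>\La\}\setminus\Si'_\La}(\cdots)\,dt$; since $\Si'_\La$ contains a full neighborhood of $H_\La \cap \Omega$ except near the tangency point, and $w$ is supported in $\Omega$, this negative term is itself $O((x_1-\La)z)$ with a small constant once we note the kernel difference is $\le C(x_1-\La)z/(\text{stuff})$ — alternatively, bound it by $C(x_1 - \La) z$ times a constant that can be made negligible. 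Fourth, combine: for $x_1$ in a suitable range, the positive contribution from $K$ dominates, giving $W(x,z) \ge c_R (x_1-\La) z$ on the whole box.

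The main obstacle I expect is handling the sign-indefinite part of $w$ on $\{x_1 > \La\}\setminus\Si'_\La$ — i.e., making sure the negative contribution to the antisymmetrized Poisson integral does not swamp the positive lower bound from $K$. The cleanest fix is probably to not fight it directly but to note that $\check W := R_\La U - U$, the harmonic extension of $\check w = R_\La u - u = -w$, satisfies $\check W > 0$ on $\{x_1 < \La\}\times\{z>0\}$ and $\pa_z \check W|_{z=0} = 0$ on $\Si_\La$ (by $\flap u = 1$ on $\Omega \supset \Si_\La$), and then reflect in $z$ and apply the boundary Harnack / Hopf-type lower bound near the flat piece $H_\La \cap \{z=0\}$; the linear-in-$(x_1-\La)$ and linear-in-$z$ behavior is exactly the generic vanishing rate of a positive harmonic function at a boundary line where it vanishes together with its normal derivative along a transversal piece. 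Either route works; I would present the direct Poisson-kernel computation since it is the most self-contained and matches the style already used for Lemma \ref{lemma positivity}.
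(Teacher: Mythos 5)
Your plan contains a concrete sign error that both creates and then fails to resolve the "main obstacle" you identify. You assert that $w = -R_\La u \le 0$ on $\{x_1 > \La\}\setminus \Si'_\La$. In fact the opposite decomposition holds: for $x$ with $x_1 > \La$ and $x \notin \Si'_\La$, the reflected point $\check{x} = (2\La - x_1, x_2)$ lies in $\{x_1 < \La\}$ but not in $\Si_\La$, hence not in $\Omega$, so $R_\La u(x) = u(\check{x}) = 0$ and $w(x) = u(x) \ge 0$. Thus $w \ge 0$ on the whole half-plane $\{x_1 \ge \La\}$ (this is exactly the remark made in the paper, both in the proof of Lemma \ref{lemma positivity} for $\check{w}$ and at the start of its proof of Lemma \ref{lemmex1x3}). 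Consequently the antisymmetrized Poisson integrand is nonnegative everywhere, there is no sign-indefinite part to control, and your third step — where you propose to bound a negative contribution by "$C(x_1-\La)z$ times a constant that can be made negligible," with no small parameter available to make it negligible — is both unnecessary and, as written, not a proof. With the sign corrected, your steps 1, 2 and 4 do close the argument: choose $K \Subset \Si'_\La$ with $w \ge \eta_0$ on $K$ and $\mathrm{dist}(K,H_\La) \ge d_0 > 0$; the kernel difference at $t \in K$ is bounded below via the mean value theorem using $|x^\La - t|^2 - |x-t|^2 = 4(x_1-\La)(t_1-\La) \ge 4 d_0 (x_1-\La)$ and the upper bound on $|x^\La - t|^2 + z^2$ over the box, and all other contributions are $\ge 0$. (Minor point: the Poisson kernel of $\R^3_+$ carries the exponent $3/2$, not $1/2$; the paper has the same typo.)

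Once repaired, your route is genuinely different from the paper's. The paper does not use the Poisson representation here: it compares $W$ with the explicit harmonic function $W'(x,z) = (x_1-\La)z$ on the cube $Q_R$ via the maximum principle, which reduces the lemma to the boundary inequality $W \ge c_R W'$ on $\pa Q_R$; establishing that inequality requires Hopf's lemma on the faces $\{x_1 = R\}$, $\{z = R\}$, $\{x_2 = \pm R\}$, and Serrin's corner lemma at the edges $X = (\La, \pm R, 0)$ to get $\pa_1\pa_z W(X) > 0$ where the interior sphere condition fails. Your (corrected) Poisson-kernel computation avoids Hopf's and Serrin's lemmas entirely and is more elementary and self-contained; the paper's barrier argument has the advantage of not requiring the identification of the energy solution $W$ with the Poisson integral of $w$, though the paper already makes that identification in Lemma \ref{lemma positivity}, so either route is consistent with the rest of the text.
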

\begin{proof}
 Let $R > 0$ large,  and  $\displaystyle Q_R :=  ]\La, R[ \times ]-R,R[ \times  ]0,R[$. As $W$ and $W' :=  x \mapsto (x_1 - \La) \, z$ are both harmonic functions on $Q_R$, we can use the maximum principle to compare them:  we just have  to show that $W \ge c_R W'$ on $\pa Q_R$ for $c_R > 0$. We take $R$ large enough so that the vertical sides of the cube (other than the one supported by  $\{x_1 = \La\}$) do not intersect $\overline{\Omega} \times \{0\}$.  Then, we  distinguish between the different sides: 
 \begin{itemize}
 \item On $\pa Q_R \cap  \{z = 0\}$ or   $\pa Q_R  \cap \{ x_1 = \La\}$,  one has $W \ge 0$ (remind that $w \ge 0$ on $\{x_1 \ge \La\}$), and $W' = 0$. The inequality is clear.   
 \item Let $\Gamma_{1,R} := \pa Q_R \cap \{x_1 =  R\}$. Let $\eps > 0$. As  $W$ is continous and $> 0$ on the compact set  $\Gamma_{1,R}\cap \{ z \ge \eps \}$,  one can find $c = c_\eps > 0$ such that $W \, \ge \,  c \, W'$. To  have the same inequality on the whole $\Gamma_{1,R}$,  it is enough to show that 
\begin{equation} \label{liminfgamma1}
 \liminf_{\substack{(x,z) \in \Gamma_{1,R}, \\ z \rightarrow 0^+}} \frac{W(x,z)}{W'(x,z)}  = \frac{1}{R-\La} \liminf_{\substack{(x,z) \in \Gamma_{1,R}, \\ z \rightarrow 0^+}} \frac{W(x,z)}{z} > 0. 
\end{equation}
This is a consequence  of  Hopf's lemma: $W$ is positive harmonic  on $Q_R $, satisfies $W=0$ on $\Gamma_{1,R} \cap \{ z = 0\}$. It follows that
$ \pa_z W > 0$ on $\Gamma_{1,R} \cap \{ z = 0\}$. Note that $\pa_z W$ exists thanks to our choice of $R$, away from $\Omega$.  It is furthermore continuous, so that  it is bounded from below by a positive constant. Inequality \eqref{liminfgamma1} follows. 
\item On $\Gamma_{3,R} := \pa Q_R \cap \{z =  R\}$,  one can proceed exactly as in the case of $\Gamma_{1,R}$: use the positivity of $W$ away from $\{x_1 = \La\}$, and use Hopf's lemma to get $\pa_1 W \ge c > 0$ at $\Gamma_{3,R} \cap \{x_1 =  \La \}$. 
\item We now turn to $\Gamma_{2,R} = \pa Q_R \cap \{x_2 = \pm R \}$. Away from the edge $x_1 = \La, z = 0$, we can as before use the positivity of $W$ and  Hopf's lemma to obtain $W \ge c W'$. It then remains to handle the vicinity of $X = (\La, \pm R, 0)$. The main point is to show that 
\begin{equation} \label{liminfgamma2}
 \liminf_{\substack{(x,z) \in \Gamma_{2,R} \\ (x,z) \rightarrow  X}} \frac{W(x,z)}{(x_1 - \La) \, z} > 0.    
\end{equation}
We can not apply Hopf's lemma: $W$ is harmonic in the dihedra $\{ x_1 > \La, z >  0 \}$, but it does not satisfy the interior sphere condition at $X$. 
As $W(x,z) = 0$ both for $x_1 = \La$ and $z = 0$,  one has 
 \begin{equation} \label{d3wd1w}
 \left\{
 \begin{aligned}
 \pa_z^k W(x,z)  & = 0 \quad \mbox{ for } x_1 = \La, \: \forall k \in \N \\
  \pa_1^k W(x,z) & = 0 \quad \mbox{ for } z = 0,  \: \forall k \in \N.
 \end{aligned}
 \right.
 \end{equation}
($(x,z)$ in the vicinity of $X$).  Hence, to prove \eqref{liminfgamma2}, it is enough to show that 
 $$ \pa_1 \pa_z W(x,z) \ge c > 0, \quad (x,z) \in B(X, \delta) \cap \{ x_1 \ge \La, z \ge 0 \}, $$
 for small enough $\delta$. By continuity of $\pa_1 \pa_z W$, it is enough to show that 
   $ \pa_1 \pa_z W(X) > 0$.
 This positivity condition follows straightforwardly from a famous lemma of Serrin, see \cite[Lemma 1, p308]{Serrin}. In our context, it reads: {\em for any vector $s$ entering the region $\{x_1 > \La, \,  z > 0\}$,   we have $\pa_s W(X) > 0$ or $\pa^2_s W(X) > 0$.}  Taking $s = e_1 + e_3$, we obtain that 
 $$ (\pa_1 + \pa_z) W(X) > 0, \quad \mbox{ or } \:  (\pa_1 + \pa_z)^2 W(X) > 0. $$
 Combining this statement with \eqref{d3wd1w}, we see that the first condition is not realized, and that the second one amounts to $\pa_1\pa_z W(X) > 0$ as expected. 
\end{itemize}
 \end{proof}

 \subsection{Contradictory bounds on $w$: configuration 1}
\label{ssec:config1}

\subsubsection*{Lower bound}
  We start with configuration 1, that is when $\Si'_\La$ is internally tangent to $\pa \Omega$ at a point $P$ not in $H_{\La}$. We introduce a small open disk $D_R \subset \Omega$  of radius $R$, also tangent to $\pa \Omega$ at $P$. We take $R$ small enough so that $\pa D_R \cap \pa \Omega = \{ P \}$. Let $x_R$ be the center of $D_R$, and let $\phi \in [0,2\pi]$ parametrizing $\pa D_R$.    Finally, for any $x$ in $D_R$,  denote by $\rho = \rho(x)$ the distance between $x$ and  the circle $\pa D_R$. One has  
  $x = x_R + ((R-\rho) \cos \phi, (R -\rho) \sin \phi)$. The aim of this paragraph is to prove the following 
  \begin{proposition}   \label{lowerboundcase1}
  Let $\beta > 1/2$. There exists $c_\beta > 0$, $\rho_\beta > 0$ such that for all $x \in D_R$  with $0 < \rho(x) < \rho_\beta$, 
  $$ w(x) \ge c_\beta \,  \rho^\beta. $$ 
  \end{proposition}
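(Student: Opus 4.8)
The plan is to bound $w$ from below by comparison with a subsolution built out of the one–dimensional fractional profile $t\mapsto t_+^\beta$, working with the three–dimensional harmonic extension so that the maximum principle can be used in a bounded domain, where the nonlocality of $\flap$ is harmless.

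First I would localize near $P$. On the compact set $\overline{D_R}\setminus B(P,r_0)$ the function $w$ is continuous, and it is strictly positive there because $w>0$ on $\Si'_\La$ (Step~1) and $\pa D_R\cap\pa\Om=\{P\}$; hence $w\ge c_1\ge (c_1/R^\beta)\rho^\beta$ on that set, and it remains to treat $x\in D_R$ with $\rho(x)$ small and $|x-P|<r_0$. It also suffices to treat $\beta\in(1/2,1)$, since for $\beta\ge1$ the inequality follows a fortiori from $\rho<1$. Near $P$ one checks, using that $\pa\Si'_\La$ is smooth and internally tangent to $\pa\Om$ at $P$, that $D_R\cap B(P,r_0)\subset\Si'_\La$ for $R,r_0$ small; consequently $\flap w=\flap u-\flap(R_\La u)=1-1=0$ on $D_R\cap B(P,r_0)$, so the harmonic extension $W$ of $w$ to $\R^3_+$ has $\pa_zW|_{z=0}=0$ there, and its even reflection $\widetilde W(x,z):=W(x,|z|)$ is harmonic in $\mathcal Q:=(D_R\cap B(P,r_0))\times(-h,h)$. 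Moreover $\widetilde W>0$ in $\mathcal Q$ (for $z\ne0$ because $W>0$ on $\{x_1>\La,\ z>0\}$ and $x_1>\La$ near $P$; for $z=0$ because $w>0$ on $\Si'_\La$), and by Lemma~\ref{lemmex1x3} one has $\widetilde W(x,z)\ge c\,|z|$ near $(P,0)$; the only point of $\overline{\mathcal Q}$ where $\widetilde W$ vanishes is $(P,0)$.

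For the barrier, let $d_R$ be the signed distance to $\pa D_R$ (positive inside $D_R$). The algebraic input is that $\flap(t_+^\beta)=B(\beta)\,t^{\beta-1}$ on $\{t>0\}$ with $B(\beta)<0$ precisely for $\beta\in(1/2,1)$ (and $B(1/2)=0$): this is the source of the restriction $\beta>1/2$. Transplanting to the disc and truncating, I would set $\underline w(x):=\psi(d_R(x))\,d_R(x)_+^\beta$ with $\psi$ a cut–off equal to $1$ on $(0,d_1/2)$ and vanishing on $(d_1,\infty)$; then, as $d_R(x)\to0$, the curvature of $\pa D_R$, the cut–off, and the contribution of the part of $\underline w$ away from $\pa D_R$ only add bounded corrections to the flat model, so that $\flap\underline w(x)=B(\beta)d_R(x)^{\beta-1}+O(1)\to-\infty$, and there is $d_2\le d_1$ with $\flap\underline w\le0$ on $\mathcal N:=D_R\cap\{0<d_R<d_2\}\cap B(P,r_0)$. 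Let $H$ be the harmonic extension of $\underline w$ to $\R^3_+$ and $\underline W(x,z):=c\,H(x,|z|)$; since $\pa_zH|_{z=0^+}=-\flap\underline w\ge0$ on $\mathcal N$, the distributional Laplacian of $\underline W$ on $\mathcal N\times(-h,h)$ is $2c\,\pa_zH(\cdot,0^+)\,\delta_{z=0}\ge0$, i.e.\ $\underline W$ is subharmonic there. Applying the maximum principle to the superharmonic function $\widetilde W-\underline W$ in the bounded domain $\mathcal N\times(-h,h)$ then yields the bound, once one checks $\widetilde W\ge\underline W$ on $\pa(\mathcal N\times(-h,h))$: on the faces $\{d_R=d_2\}$, $\{|x-P|=r_0\}$ and $\{z=\pm h\}$ one has $\widetilde W\ge\kappa>0$ (positivity and compactness) while $\underline W\le c\|H\|_\infty$, so a small $c$ suffices; on the lateral face $\{d_R=0\}$ one has $\underline w=0$, and one must control $cH(x,|z|)$ against $\widetilde W$ near the edge $\{P\}\times(-h,h)$. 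Granting this, $w(x)=\widetilde W(x,0)\ge c\,\underline w(x)=c\,d_R(x)^\beta$ on $\mathcal N$, which with the preliminary reduction is the statement.

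I expect the genuine obstacle to be precisely that last point: estimating $\widetilde W$ — hence the harmonic extension $H$ of a power of the distance — in a neighbourhood of the degenerate edge $\{P\}\times(-h,h)$, where both $\widetilde W$ and $\underline W$ tend to zero. This is where the tangency of $D_R$ with $\pa\Om$ at $P$ really enters, and it requires combining the lower bound $\widetilde W\ge c\,|z|$ of Lemma~\ref{lemmex1x3}, a Hopf–type bound for $\widetilde W$ away from $(P,0)$, and the explicit (small) size of $H$ there. A secondary, more computational, ingredient is the expansion $\flap\underline w=B(\beta)d_R^{\beta-1}+O(1)$ together with the sign $B(\beta)<0$ for $\beta\in(1/2,1)$, on which the whole construction rests.
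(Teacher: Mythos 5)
Your reduction to a neighbourhood of $P$ and to $\beta\in(1/2,1)$, your use of the even harmonic extension, and your algebraic input are all sound: one does have $\flap(t_+^\beta)=B(\beta)\,t^{\beta-1}$ on $\{t>0\}$ with $B(\beta)=\beta\cot(\beta\pi)<0$ for $\beta\in(1/2,1)$, which is the same $\beta>1/2$ threshold that the paper meets through the computation $\Delta\bigl(\rho^\beta\cos(\theta/2)\bigr)\propto(\beta^2-1/4)\rho^{\beta-2}\cos(\theta/2)$. The fatal problem is the choice of comparison domain $\mathcal N\times(-h,h)$: its boundary contains the vertical face $(\pa D_R\cap B(P,r_0))\times(-h,h)$ over the circle $\pa D_R$, and the inequality $\widetilde W\ge\underline W$ cannot be verified there near the edge point $(P,0)$. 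Indeed the harmonic extension $H$ of $(d_R)_+^\beta$ does not vanish on that face: in the flat transverse model the harmonic extension of $t_+^\beta$ is $\rho^\beta\sin(\beta(\pi-\vartheta))/\sin(\beta\pi)$, so directly above the edge ($\vartheta=\pi/2$) one gets $H(P,z)\ge C\,z^{\beta}$ with $C>0$. On the other hand, the only lower bound available on $\widetilde W$ near $(P,0)$ \emph{before} the proposition is proved is the linear one of Lemma \ref{lemmex1x3}, $\widetilde W\ge c'|z|$ (and nothing better can be expected a priori, since $w(P+rn)=O(r)$). As $\beta<1$, one has $c\,C|z|^\beta> c'|z|$ for $|z|$ small no matter how small you take $c$, so the boundary comparison fails on that face; and you cannot excise a neighbourhood of $(P,0)$ from the domain, because the conclusion is needed precisely at points approaching $P$. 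This is exactly the obstacle you flag at the end, but the ingredients you list (Lemma \ref{lemmex1x3}, a Hopf bound away from $(P,0)$, smallness of $c$) cannot close it.

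The repair is geometric, and it is what the paper does: take as comparison domain a slit solid torus $U_\delta$ around the \emph{whole} circle $C_R=\pa D_R\times\{0\}$ (the $\delta$-neighbourhood of $C_R$ from which only the outer part of the plane $\{z=0\}$, i.e.\ $\{\theta=\pi\}$, is removed). Then the vertical cylinder over $\pa D_R$ lies in the interior and never has to be compared; the barrier — the paper's explicit subharmonic $W_\beta=\rho^\beta\cos(\theta/2)$, but your $c\,H(x,|z|)$ would serve equally well — only needs to be dominated on the torus surface $\{\rho=\delta\}$ (positivity and compactness away from $\{z=0\}$, Hopf's lemma, and Lemma \ref{lemmex1x3} near the slit, where the barrier is $O(|z|)$ and the linear bound suffices) and to vanish on the slit itself, where $\widetilde W\ge0$ is enough. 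With that single change of domain your argument closes and becomes essentially the paper's proof, with the one-dimensional computation of $\flap(t_+^\beta)$ replacing the direct verification that $\Delta W_\beta\ge0$.
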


 First, we shall  extend the 2d coordinate system $(\rho,\phi)$  to a  3d coordinate system $(\rho,\phi,\theta)$ in the $R/2$-neighborhood of $C_R := \pa D_R \times \{ 0 \}$ (see Figure \ref{fig:coord-sphe}). Any  $(x,z) = (x_1, x_2, z)$ in this neighborhood has a unique projection $p(x,z)$ on $C_R$, which reads: 
 $$ p(x,z) = (x_R,0) +  (R\cos \phi,  R \sin \phi, 0). $$
 We introduce  $\rho = | (x,z) - p(x,z)|$ the distance between $(x,z)$ and $C_R$,  and $\theta  \in [-\pi, \pi[$ the oriented angle between the vector $(x_R, 0) - p(x,z)$ and $(x,z) - p(x,z)$. Then, one can write
\begin{align*} 
 (x,z) & = p(x,z) + (-\rho\cos\theta \cos \phi, -\rho\sin \theta \sin \phi, \rho \sin \theta) \\
 & =  (x_R,0) +  ((R-\rho\cos\theta) \cos \phi, (R-\rho\sin \theta) \sin \phi, \rho \sin \theta)  
 \end{align*}
  The triplet $(\rho, \theta, \phi)$ defines a system of orthogonal curvilinear coordinates in the $R/2$ neighborhood of $C_R$. For $\theta = 0$, it matches the 2d coordinates $(\rho,\phi)$ introduced above.
\begin{figure}
\includegraphics[height=10cm]{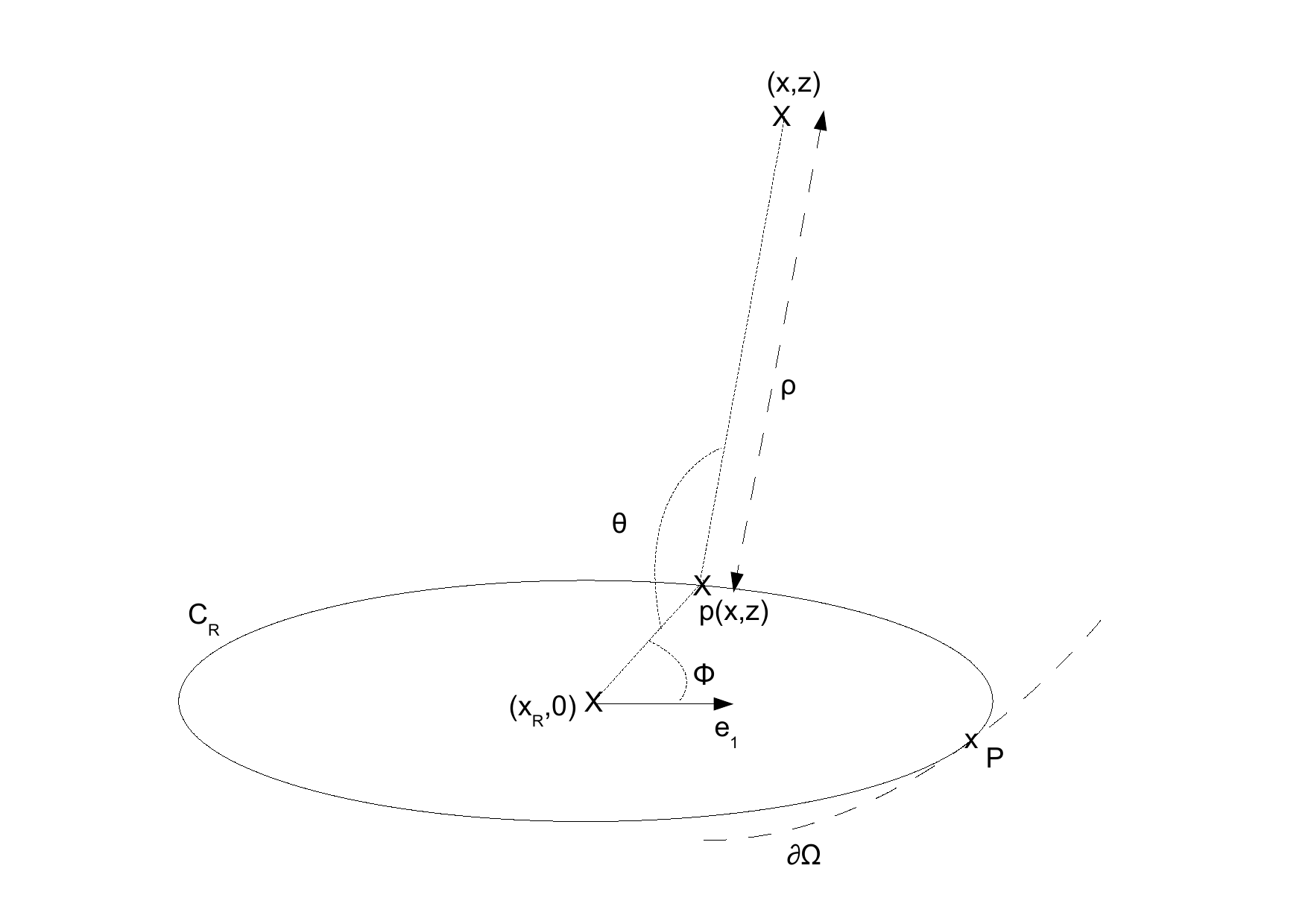}
\caption{The coordinates $(\rho, \theta, \phi)$.}\label{fig:coord-sphe} 
\end{figure}

\medskip
To prove Proposition  \ref{lowerboundcase1}, we shall rely again on the harmonic extension $W$ of $w$.  It satisfies 
$$ \Delta W  = 0 \: \mbox{ on } \:  \R^3_+, \quad W \vert_{z = 0} = w \: \mbox{ on } \: \R^2, \quad \pa_z W \vert_{z = 0}  = 0 \: \mbox{ on } \:  \Si'_{\La}.$$
We can extend $W$ to $\R^3$ into an even function of $z$ (still denoted $W$). Thanks to the last condition, it is harmonic through $\Si'_\La$.   We will show that for $\beta > \frac{1}{2}$,
\begin{equation} \label{lowerbound3dcase1}
W(x,z) \ge c_\beta \,  \rho^\beta \cos(\theta/2), \quad \forall   \rho=\rho(x) < \delta \: \mbox{ small enough. }   
\end{equation} 
This will yield Proposition \ref{lowerboundcase1} for $\theta= 0$. This lower bound on $W$ will follow from the maximum principle. We introduce  $\: U_\delta \, := \, \{ x, \: \rho(x) < \delta \} \setminus \{ \theta = \pi\}$. The open set $U_\delta$ is obtained by removing from the $\delta$-neighborhood of $C_R$ the part of the plane $\{z = 0 \}$ outside $D_R$. 
 Let $W_\beta$ given in $(\rho,\theta,\phi)$ coordinates by $W_\beta(x,z) = \rho^\beta \cos(\theta/2)$. We will show that for $\delta > $0 small enough, one has: 
\begin{description}
\item[i)]  $\Delta W_\beta \ge 0 \:  (= \Delta W) \: $ in  $ \:  U_\delta$. 
\item[ii)] $W \ge c_\beta W_\beta \: $  on  $\: \pa U_\delta$, for some $c_\beta > 0$.
\end{description}
The lower bound \eqref{lowerbound3dcase1} will follow.  

\medskip
{\em Proof of i)}. One can check that the Laplacian in coordinates $(\rho,\theta,\phi)$ reads 
\begin{multline} \label{Laplacian} \Delta  f  = \\
 \frac{1}{\rho(R-\rho\cos \theta)}\left(  \frac{\pa}{\pa \rho} \left( \rho(R-\rho\cos \theta) \frac{\pa}{\pa\rho} f \right) + \frac{\pa}{\pa \theta} \left( \frac{R-\rho\cos\theta}{\rho} \frac{\pa}{\pa \theta} f \right)  + \frac{\pa}{\pa \phi} \left( \frac{\rho}{R-\rho\cos\theta} \frac{\pa}{\pa \phi} f \right) \right). 
 \end{multline}
This implies that 
\begin{align*} 
\Delta  W_\beta & =  \frac{1}{\rho(R-\rho\cos \theta)} \Bigl( \Bigl( (\beta^2-1/4)  \rho^{\beta-1} (R-\rho\cos\theta) - \beta \rho^\beta  \cos \theta
     \Bigr) \cos(\theta/2)  - \frac{1}{2} \rho^\beta  \sin \theta  \sin(\theta/2) \Bigr) \\
 & = \frac{1}{\rho(R-\rho\cos \theta)}  \left( (\beta^2 - 1/4) \rho^{\beta-1} (R- \rho\cos \theta)   -  \beta\rho^\beta  \cos \theta -    \rho^\beta  \sin^2(\theta/2) \right)    \cos(\theta/2)  \\
 & =      \frac{1}{\rho(R-\rho\cos \theta)} \left(  (\beta^2 - 1/4) \, R \,  \rho^{\beta-1} + O(\rho^\beta) \right)   \cos(\theta/2) \ge 0 
\end{align*}
for $\delta$ (and $\rho < \delta$) small enough. 
Note that we used the identity $\sin \theta = 2 \sin(\theta/2) \cos(\theta/2)$ in the second line. 

\medskip
{\em Proof of ii)}. We must distinguish between different zones. 
\begin{itemize}
\item On $\pa U_\delta \cap \{ \theta = \pm \pi \}$, one has $W \ge 0,  W_\beta = 0$, so that the inequality is clear. 
\item The compact set  $\pa U_\delta \cap \{ \theta = 0 \}$ is included in $\Si'_\La$,  so that $W > 0$ there, and as $W$ is continuous,  it is even bounded from below by a positive constant. In particular, one can find $c >0$ such that $W \ge c W_\beta$. 
\item For any $\eps > 0$, we also know that $W > 0$ on  the compact set $\pa U_\delta \cap  \{ |z| \ge \eps \}$. We obtain again that   
$W \ge c W_\beta$ for some $c=c_\eps > 0$. 
\item It remains to show that for $\eps > 0$ small enough, one has 
$$W \ge c W_\beta \:  \mbox{ on } \:  \pa U_\delta \cap  \{ 0 < |z| \le \eps \}.$$
Due to the fact that $W$ is non-negative in $\R^3$ and even in the variable $z$, it is enough to show that 
$$ \liminf_{\substack{(x,z) \in \pa U_\delta,\\ z \rightarrow 0^+}} \: \left| W(x,z) / W_\beta(x,z) \right| > 0. $$
To show such a property, we argue by contradiction. Assume that we can find a sequence $(x^k,z^k)$ satisfying: 
$$(x^k,z^k) \subset \R^3_+ \cap \pa U_\delta,  \quad z^k \rightarrow 0^+, \quad \left| W(x^k,z^k) / W_\beta(x^k,z^k) \right| \rightarrow  0.$$  
Up to the extraction of a  subsequence, we can assume that $x^k,z^k \rightarrow X$ for some $X \in \pa U_\delta $ with $X =(X_1,X_2, 0)$. There are two possibilities: 
\begin{itemize}
\item either $W(X)  > 0$. As $W_\beta$ is bounded, this yields easily a contradiction.  
\item or $W(X) = 0$. In particular, $X$ is outside $C_R$, which implies in turn $W_\beta(X) = 0$. More generally, $W(x_1,x_2,0) = 0$ for all 
$(x_1,x_2)$  close to $(X_1,X_2)$.    Moreover, as $W_\beta$ is smooth near $X$, we obtain  
\begin{equation*} |W_\beta(x,z)| = O(z), \quad \mbox{  as }  z \rightarrow 0^+, \quad x \: \mbox{ in the vicinity of  } X.
\end{equation*}
From there, we get that 
\begin{equation} \label{limsup}
 \limsup_{k \rightarrow \infty} |W(x^k,z^k)|/z^k = 0
\end{equation}
As  $x_1^k$ remains away from $\La$, this limit contradicts Lemma \ref{lemmex1x3}. 
\end{itemize}
\end{itemize}

\subsubsection*{Upper bound}
We now turn to an upper  bound for $w$.  Let $x \in \Si'_\La$. For $x$ close  to $P$, we can write in a unique way
$$ x \: = \:  x^* \, + \,  r  \, n, \quad \mbox{ resp. } x \: = \:  \bar{x}^* \, +  \, \bar{r} \,  \bar{n}, \quad r, \bar{r} > 0,    $$
  where $x^* \in \pa \Omega$, resp. $\bar{x}^* \in  \pa \Si'_\La$, and $n = n(x^*)$, resp. $\bar{n} = \bar{n}(x^*)$,  refers to the inward normal to $\pa \Omega$ at $x$ , resp. to the inward normal to $\pa   \Si'_\La$ at $\bar{x}$. 
  We now use the results of Costabel et al. \cite{CoDauDu}, and the fact that $\pa^{1/2}_n u = c_0$ at the boundary. Following \eqref{asymptoticexp2},  we get the expansions
$$ u (x) = c_0 r^{1/2} + O(r), \quad R_\La u(x) = c_0 \bar{r}^{1/2} + O(\bar{r}), \quad r,\bar{r} \rightarrow 0. $$  
Now, {\em we remark that the inward normal vectors $n$ and $\bar{n}$  coincide at $P$}. The expansions above then lead to  the upper bound
\begin{equation} \label{upperbound}
  w(P \, + \, r \, n ) = O(r), \quad r > 0 \: \mbox{ small enough}.  
\end{equation}
\medskip
Furthermore, the coordinate $r$ along the normal at $P$ coincides with the  coordinate $\rho$ from Proposition \ref{lowerboundcase1}. Applying this proposition with $\beta = 3/4$, we get
$$ w(P \, + \, r \, n)  \, \ge \, c \, r^{3/4}, \quad  r > 0 \: \mbox{ small enough} $$ 
Comparison between this lower bound and the upper bound \eqref{upperbound} gives a contradiction.

  \subsection{Contradictory bounds on $w$: configuration 2}
In this paragraph, we investigate configuration 2, in which $H_\La$ is orthogonal to $\pa \Omega$ at some point $Q$. 
\subsubsection*{Lower bound} 
We shall prove the following
\begin{proposition} \label{lbcase2}
Let $\vec{\nu} = (1,\nu)$ be a vector entering $\Si'_\La$ at $Q$. Let $\beta > 1/2$. There exists $c_\beta > 0$ such that 
$$ w(Q + t \vec{\nu}) \, \ge \, c_\beta \, t^{1 + \beta}, \quad \mbox{ for $t$ small enough}. $$
\end{proposition}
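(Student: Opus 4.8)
The plan is to transpose the subsolution argument of Proposition \ref{lowerboundcase1} to the vertex $Q$. As before, I would work with the harmonic extension $W$ of $w$ to $\R^3_+$, extended to all of $\R^3$ as an even function of $z$. Then $W$ is harmonic on $\R^3$ off the crack $(\R^2\setminus\Si'_\La)\times\{0\}$; it vanishes on $H_\La\times\R$ (because $R_\La w=-w$); it is $\ge 0$ on $\{x_1\ge\La\}\times\R$, and $>0$ on $\{x_1>\La\}\times(\R\setminus\{0\})$ and on $\Si'_\La\times\{0\}$ (arguing with the Poisson kernel exactly as in the proof of Lemma \ref{lemma positivity}); and by Lemma \ref{lemmex1x3} together with evenness it satisfies $W(x,z)\ge c_R\,(x_1-\La)\,|z|$ near $(Q,0)$. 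It suffices to establish the three-dimensional lower bound $W(Q+t\vec\nu,0)\ge c_\beta\,t^{1+\beta}$.

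Since $\pa_n^{1/2}u=c$ and $H_\La$ is orthogonal to $\pa\Om$ at $Q$, near $Q$ the cap $\Si'_\La$ is bounded by a segment of $H_\La$ and by the reflected arc $\gamma':=R_\La(\pa\Om\cap\{x_1<\La\})$, which is tangent to $\pa\Om$ at $Q$; thus $\Si'_\La$ has a right angle at $Q$. I would introduce curvilinear coordinates $(\xi_1,\rho_1,\vartheta)$ in a neighbourhood of $Q$, where $\xi_1:=x_1-\La$ is the signed distance to $H_\La$ and $(\rho_1,\vartheta)$, with $\vartheta\in(-\pi,\pi)$, are polar coordinates in the planes normal to $\gamma'$, the half-plane $\{\vartheta=0\}$ being the trace of $\Si'_\La$ on $\{z=0\}$ and $\{\vartheta=\pm\pi\}$ the crack. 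Along a ray $Q+t\vec\nu$ with $\vec\nu=(1,\nu)$ entering $\Si'_\La$ one has $\xi_1\asymp t$ and $\rho_1\asymp t$. The comparison function I would use is
$$
W_\beta:=\xi_1\,\rho_1^{\beta}\cos(\vartheta/2),
$$
the analogue of $\rho^{\beta}\cos(\theta/2)$ from Proposition \ref{lowerboundcase1}, multiplied by the extra factor $\xi_1$ which vanishes on $H_\La$ and makes $W_\beta$ as singular near $Q$ as the expected leading term $\xi_1\rho_1^{1/2}$ of $W$. A computation of the Laplacian in these coordinates, parallel to the one in the previous subsection (the curvature of $\gamma'$ producing only lower-order terms), shows that, since $\beta^2-1/4>0$, one has $\Delta W_\beta\ge 0$ in the region $\mathcal U_\delta:=\{\rho_1<\delta\}\cap\{\xi_1\ge\eps_0\rho_1\}\setminus\{\vartheta=\pm\pi\}$, for $\delta$ small and a fixed opening $\eps_0>0$; restricting to $\xi_1\gtrsim\rho_1$ is precisely what lets the curvature corrections be absorbed into the leading term $(\beta^2-1/4)\,\xi_1\,\rho_1^{\beta-2}\cos(\vartheta/2)$, and it also keeps $\mathcal U_\delta$ inside $\{x_1>\La\}$, where $W\ge 0$.

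Next I would check that $W\ge c\,W_\beta$ on $\pa\mathcal U_\delta$: on $\{\vartheta=\pm\pi\}$ one has $W\ge 0=W_\beta$; on the cap $\{\rho_1=\delta\}$ the inequality follows from the continuity and strict positivity of $W$ away from the crack, with $W\ge 0=W_\beta$ on the crack part. The delicate portion is the lateral face $\{\xi_1=\eps_0\rho_1\}$ near the edge $\{\rho_1=0\}$, which I would handle by contradiction exactly as in Proposition \ref{lowerboundcase1}: if $\liminf|W/W_\beta|=0$ along a sequence of points of this face with $z\to 0^+$ and limit $X$, then either $W(X)>0$, impossible since $W_\beta$ is bounded; or $X\in\gamma'\setminus\{Q\}$, in which case $\Si'_\La$ is smooth near $X$, $w>0$ inside it, and a Hopf-type lemma for $w$ (of the kind used to show $c>0$) gives $W\asymp\rho_1^{1/2}\cos(\vartheta/2)$ near $X$ with positive coefficient, whence $W/W_\beta\asymp\rho_1^{1/2-\beta}\to\infty$, again impossible; or $X=Q$, the genuinely new case, for which the bilinear bound $W\ge c_R\xi_1|z|$ is not enough and one needs a sharper vertex estimate $W\gtrsim\xi_1\,\rho_1^{1/2+\eta}\cos(\vartheta/2)$ with $\eta\in(0,\beta-1/2)$, obtained by running the subsolution construction of Proposition \ref{lowerboundcase1} once more for $W$ itself near $Q$. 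With the comparison on $\pa\mathcal U_\delta$ established, the maximum principle applied to the subharmonic function $cW_\beta-W$ gives $W\ge cW_\beta$ throughout $\mathcal U_\delta$; evaluating at $z=0$, $\vartheta=0$ along $Q+t\vec\nu$ yields $w(Q+t\vec\nu)=W(Q+t\vec\nu,0)\ge c\,\xi_1\rho_1^{\beta}\asymp t^{1+\beta}$, as claimed.

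I expect the vertex $Q$ to be the main obstacle: there $W$ vanishes simultaneously on a Dirichlet face ($H_\La\times\R$) and on a crack face meeting it at a right angle, the product function $W_\beta$ is only approximately harmonic, and Lemma \ref{lemmex1x3} alone does not control $W_\beta$ near $Q$. The point of working inside a cone $\{\xi_1\gtrsim\rho_1\}$ is to stay away from the Dirichlet face while still reaching $Q$, and the required vertex lower bound has to be bootstrapped from Lemma \ref{lemmex1x3} by the iterated subsolution and Hopf arguments sketched above.
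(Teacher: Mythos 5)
Your overall strategy --- harmonic extension, comparison with an explicit subsolution carrying the $\rho^{\beta}\cos(\theta/2)$ crack singularity times a factor vanishing on $H_\La$, boundary control via Hopf's lemma and Lemma \ref{lemmex1x3} --- is the paper's, but your specific choices create a gap that you flag without closing. The subsolution $W_\beta=\xi_1\rho_1^{\beta}\cos(\vartheta/2)$ forces you into the cone $\{\xi_1\ge\eps_0\rho_1\}$: outside it, near $\{x_1=\La\}$, the cross term $2\na\xi_1\cdot\na\bigl(\rho_1^{\beta}\cos(\vartheta/2)\bigr)$ and the curvature corrections are no longer dominated by $(\beta^2-\tfrac14)\,\xi_1\rho_1^{\beta-2}\cos(\vartheta/2)$, so subharmonicity fails. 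But the lateral face $\{\xi_1=\eps_0\rho_1\}$ of that cone passes through $Q$, and on its trace in $\{z=0\}\cap\Si'_\La$ (where $\vartheta\approx0$, so $\cos(\vartheta/2)\approx1$ but $|z|\approx0$) the required inequality $W\ge c\,W_\beta\asymp\rho_1^{1+\beta}$ is precisely the statement of the proposition along the ray of slope $1/\eps_0$. Lemma \ref{lemmex1x3} only gives $W\gtrsim\xi_1|z|\asymp\rho_1^{2}\sin(\vartheta/2)\cos(\vartheta/2)$, which degenerates as $\vartheta\to0$ and is of lower order than $\rho_1^{1+\beta}$ for $\beta<1$. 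The ``bootstrapped'' vertex estimate $W\gtrsim\xi_1\rho_1^{1/2+\eta}\cos(\vartheta/2)$ you invoke is not established, and obtaining it would again require a comparison on a domain whose boundary touches $Q$ --- the same problem. As written, the argument is circular at the vertex.

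The paper sidesteps this by a different choice of subsolution: it keeps the toroidal coordinates $(\rho,\theta,\phi)$ around the tangent circle $C_R$ from configuration 1 and takes $W_\beta=\rho^{\beta}\cos(\theta/2)\cos\phi$, where $\cos\phi$ plays the role of your $\xi_1$ but factors out of every term of $\Delta W_\beta$ in formula \eqref{Laplacian}; hence $\Delta W_\beta\ge0$ on the whole half-tube $U_\delta^+=\{\rho<\delta,\ x_1>\La,\ \theta\neq\pi\}$, with no cone. With that domain, $Q$ lies only on the faces $\{x_1=\La\}$ (where $W=W_\beta=0$) and the crack (where $W\ge0=W_\beta$), so no quantitative estimate at the vertex is needed; the only nontrivial comparisons occur on $\{\rho=\delta\}$, at distance $\delta$ from $C_R$, where Hopf's lemma and Lemma \ref{lemmex1x3} do suffice since there $W_\beta\asymp(x_1-\La)\,|z|$. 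To repair your proof, replace the cone by the full slab $\{x_1>\La\}$ and choose a subsolution whose $H_\La$-vanishing factor multiplies the entire expression of $\Delta W_\beta$, as the paper's does.
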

To prove this proposition, we consider again a small disk $D_R$, of radius $R$, tangent to $\pa \Omega$ at $Q$.  Exactly as in the proof of Proposition  
 \ref{lowerboundcase1}, we introduce the 3d coordinates $(\rho,\theta,\phi)$ in a $R/2$-neighborhood of $C_R = \pa D_R \times \{0\}$. We also introduce the 3d harmonic extension $W$ of $w$, even with respect to the variable $z$. Our aim is to prove a lower bound on $W$ which implies Proposition \ref{lbcase2}, namely
 \begin{equation} \label{lowerboundcase2}
  W(x,z) \ge c_\beta \, \rho^\beta \cos(\theta/2) \cos\phi, \quad \forall -\pi/2 \le \phi \le \pi/2, \quad \forall \theta \in [-\pi,\pi], \quad \forall \rho = \rho(x) < \rho_\beta 
  \end{equation}
with $\rho_\beta>0$ small enough. Therefore,  let $W_\beta(x,z) =  \rho^\beta \cos(\theta/2) \cos\phi$. Comparison between  $W$ and $W_\beta$ will come from  the maximum principle. 
  The main change with respect to configuration 1 is that $W = 0$ on the hyperplane $\{x=(x_1,x_2,z),  \: x_1 = \La\}$ (corresponding to $\phi = \pm \pi/2$). We have to restrict to the open set 
  $$ U^+_\delta = U_\delta \cap \{ x_1 > \La\} =  \{  \rho < \delta, \: x_1 > \La, \: \theta \neq \pm\pi \}. $$
 We shall prove that 
 \begin{description}
 \item[a)]  $\Delta W_\beta \ge 0  \: ( = \Delta W) \: \mbox{ on } U^+_\delta$.  
\item[b)]  $ W \ge c_\beta W_\beta$ on $\pa U^+_\delta$, $\delta > 0$ small. 
\end{description} 

\medskip
{\em Proof of a).} We use again formula \eqref{Laplacian}. It gives
\begin{align*}
 \Delta W_\beta & = \frac{1}{\rho (R - \rho\cos \theta)} \biggl(  \left(  (\beta^2 - 1/4) \, R \,  \rho^{\beta-1} + O(\rho^\beta) \right)   \cos(\theta/2) \cos \phi \\
 & \quad - \frac{\rho^{\beta+1}}{R - \rho \cos \theta} \cos(\theta/2) \cos\phi \biggr)\\
 & =  \frac{1}{\rho (R - \rho\cos \theta)} \left(  (\beta^2 - 1/4) \, R \,  \rho^{\beta-1} + O(\rho^\beta) \right)   \cos(\theta/2) \cos \phi \ge 0.
 \end{align*}

\medskip
{\em Proof of b).} 
Let $\eps > 0$. Away from $H_\La$, that is over $\pa U^+_\delta \cap \{ x_1 > \La + \eps\}$, we can perform exactly the same analysis as in the proof of Proposition \ref{lowerboundcase1}, point ii). Thus, 
$$ W \, \ge \,  c \,  W_\beta, \quad \mbox{ over } \:   \pa U^+_\delta \cap \{ x_1 > \La + \eps\} \: \mbox{ for some } \:  c=c_\eps > 0.$$

\medskip
It remains to treat the vicinity of $H_\La$ inside $\pa U^+_\delta$. For  $x_1 = \La$ (that is $\phi = \pm \pi/2$), one has $W(x,z) = W_\beta(x,z) = 0$. Also,
for  $\theta =  \pm\pi$, one has $W(x,z) \ge 0,  W_\beta(x,z) = 0$.  Everywhere else, $W_\beta(x,z) > 0$ and $W(x,z) > 0$. Hence,  it remains to show that 
 $$ \liminf_{\substack{(x,z) \in \pa U^+_\delta\setminus \{|\theta| = \pi\},  \\ x_1 \rightarrow \Lambda^+}} \: \left| W(x,z) / W_\beta(x,z) \right| > 0. $$
Again, we argue by contradiction: we assume  that there is a sequence 
$$(x^k,z^k) \subset  \pa U^+_\delta\setminus \{\theta = \pi\}, \quad x^k_1 \rightarrow \La^+, \quad \left| W(x^k,z^k) / W_\beta(x^k,z^k) \right| \rightarrow  0.$$  
Up to a subsequence, we can  assume that  $(x^k,z^k) \rightarrow X$ for some $ X = (\La,X_2,Z) \in \pa U^+_\delta$. As $W$ and $W_\beta$ are even in $z$, we can also assume that $z^k \ge 0$ for all $k$.  
There are several cases:
 \begin{itemize}
 \item $(X_1,X_2) = (\La, X_2) \in \Omega$. $W$ is harmonic and positive on $\{(x, z), | (x,z) - X | \le \eps, x_1 > \La\}$, $\eps > 0$ small enough. Moreover, for all $(x,z)$  in this set, we have $W(x,z) > 0 = W(\La,x_2,z)$. We can apply Hopf's lemma, which yields $\pa_1 W(\La,x_2,z) \ge C > 0$.  Hence, 
 $$ \liminf_{k \rightarrow \infty} \frac{W(x^k,z^k)}{|x^k_1 - \La|}  > 0 $$
 which yields in turn 
 $$  \liminf_{k \rightarrow \infty} \frac{W(x^k,z^k)}{|W_\beta(x^k,z^k)|}  > 0. $$
 Indeed, $|W_\beta(x^k,z^k)| = |W_\beta(x^k,z^k) - W_\beta(\La, x_2^k,z^k)| = O(|x^k_1 - \La|)$ due to the regularity of $W_\beta$ near $X$. Thus, we reach a contradiction. 
 \item  $(X_1,X_2) = (\La, X_2) \notin \Omega, \: Z  \neq 0$.  We are still in a situation where we can apply Hopf's lemma to $W$ near $X$, which yields the same contradiction as above. 
\item  $(X_1,X_2) = (\La, X_2) \notin \Omega, \:  Z = 0$. Although $W$ is harmonic and positive on the dihedra $\{ x_1 > \La, z > 0 \}$, we can not apply Hopf's lemma because the dihedra does not satisfy  the interior sphere condition at $X$.  Nevertheless, thanks to Lemma \ref{lemmex1x3}, we get that for some $c' > 0$, for all $k$ large enough
$$ W(x^k,z^k) \: \ge \:  c \, (x^k_1 - \Lambda) z^k \: \ge \:  c' \, \left(\frac{\pi}{2} - |\phi^k|\right) \, (\pi - \theta^k) \: \ge \: c'' \, W_\beta(x_k,z^k). $$  
This ends the proof of \eqref{lowerboundcase2}, and so the proof of Proposition \ref{lbcase2}. 
\end{itemize}

\subsubsection*{Upper bound}
We now look for an upper bound for $w$ along the ray $Q \, + \,  t \, \vec{\nu}$, $\vec{\nu} \, = \,  (1, \nu)$, $t > 0$ small. We show 
\begin{equation} \label{ubcase2}
w(Q + t \vec{\nu}) = O(t^2), \quad \mbox{ as $t$ goes to } 0^+. 
\end{equation}
Of course, such an upper bound leads to a contradiction with  Proposition \ref{lbcase2} (for $1/2 < \beta < 1$). 

\medskip
As in  configuration 1, this bound follows from the behaviour of $u$ near $\pa \Omega$, as given by \eqref{asymptoticexp2}. We remind that for $x$ near $Q$, 
$$ u(x) \: = \: c_0 \, r^{1/2} \: + \: C_1(x_1) \, r^{3/2} \: + \: u_{reg}(x) $$
where 
\begin{itemize}
\item $r = r(x)$ is a normal coordinate in the vicinity of $\pa \Omega$ (which corresponds to $r=0$). 
\item $c_0$ is the constant fractional  normal derivative at $\pa \Omega$,  $\: C_1$ is a smooth function of $x_1$. 
\item $u_{reg}$ is $C^2$ near $\pa \Omega$, and vanishes at $\pa \Omega$. 
\end{itemize}
We can then write 
$$ w(x) = c_0 w_{1/2}(x) + C_1 w_{3/2}(x) + w_{reg}(x) + O(|x_1-\La| r^{3/2}) $$
with $C_1 := C_1(\La)$, 
\begin{equation*}
w_{1/2}(x) \: := \: r^{1/2}(x) - r^{1/2}(\check{x}), \quad w_{3/2}(x) \: := \: r^{3/2}(x) - r^{3/2}(\check{x}), 
 w_{reg}(x) \: := \: u_{reg}(x) - u_{reg}(\check{x}), 
\end{equation*}
where $\check{x} := (2 \La - x_1, x_2)$. Note that $w_{1/2}$, $w_{3/2}$, $w_{reg}$ vanish at $Q$.

\medskip
Let us first prove that $\na w_{reg}(Q) = 0$. The idea is the same as in the paper of Serrin \cite{Serrin}. Clearly, $w_{reg}(\La,\cdot) = 0$, so that $\pa_2 w_{reg}(\La, \cdot) = 0$. In particular, $\pa_2 w_{reg}(Q) = 0$. We then have to show that $\pa_1 w_{reg}(Q)$. Near $Q$, we can write $\pa \Omega$ as a graph: $x_2 = \psi(x_1)$, with $\psi'(\La) =0$. Moreover, 
$$ u_{reg}(x_1, \psi(x_1)) = 0 \: \Rightarrow \: \pa_1 u_{reg} + \psi' \pa_2 u_{reg} = 0, $$
so that $\pa_1 u_{reg}(\La,0) = 0$, and from there,  $\pa_1 w_{reg}(Q) = 2 \, \pa_1 u_{reg}(Q) =0$. It follows that $w_{reg}(Q \,  + \,  t \, \vec{\nu}) = O(t^2)$ for $t \rightarrow 0^+$. 

\medskip
We still  have to control $w_\alpha$, $\alpha = 1/2$ or $3/2$. For $x$ close enough to $\pa \Omega$, we denote $p=p(x)$ its orthogonal projection on $\pa \Omega$. Near $Q$, it reads $p = (p_1, \psi(p_1))$, with 
\begin{equation} \label{ortho}  \begin{pmatrix} x_1 - p_1 \\ x_2 - \psi(p_1) \end{pmatrix} \cdot   \begin{pmatrix} 1\\ \psi'(p_1) \end{pmatrix} \: = \: 0. 
\end{equation}
Then, $r(x)^2 = (x_1 - p_1)^2 + (x_2 - \psi(p_1))^2 =  (1 + \psi'(p_1)^2) (x_2 - \psi(p_1))^2, $
and 
$$w _\alpha(x) = (1 + \psi'(p_1)^2)^{\alpha/2} |x_2 - \psi(p_1)|^\alpha  - (1 + \psi'(\check{p}_1)^2)^{\alpha/2} |x_2 - \psi(\check{p}_1)|^\alpha $$ 
with $\check{p} := p(\check{x})$. 
One may then Taylor expand $w_\alpha(Q + t \vec{\nu})$ with respect to $t$ ($x_1 = \La + t$, $x_2 = \psi(\La) + \nu t$). Indeed,  relation \eqref{ortho}:
  $$
\begin{aligned}
 \La+ t -p_1 + (\psi(\La)+\nu t-\psi(p_1))\psi'(p_1)=0,\\
 \mbox{ resp. } 2 \La- (\La+t)  - \check{p}_1 + (\psi(\La)+\nu t-\psi(\check{p}_1))\psi'(\check{p}_1)= 0
\end{aligned}
$$
 leads to $p_1 = \La + t + O(t^2)$, resp.  $\check{p}_1 = \La - t + O(t^2)$. Then,  since $\psi'(\La)=0$, 
 $$
 \psi(p_1)= \psi(\La) + \frac{\psi''(\La)}{2} t^2 + O(t^3),\quad \psi(\check{p}_1)= \psi(\La) + \frac{\psi''(\La)}{2} t^2 + O(t^3),
 $$
 so that
 $$ 
 \begin{aligned}
  |x_2 - \psi(p_1)|^\alpha = |\nu t|^\alpha \left(1-\frac{\alpha \psi''(\La)}{2\nu} t+ O(t^2)\right), \\ \mbox{ resp. }  \:   |x_2 - \psi(\check{p}_1)|^\alpha  =  |\nu t|^\alpha  \left(1-\frac{\alpha \psi''(\La)}{2\nu} t+ O(t^2)\right) 
 \end{aligned}
$$
and 
$$   (1 + \psi'(p_1)^2)^{\alpha/2} = 1 + \frac{\alpha}{2} \psi''(\La) t^2 + O(t^3), \: \quad    \mbox{ resp. }  \:  (1 + \psi'(\check{p}_1)^2)^{\alpha/2} =  1 + \frac{\alpha}{2} \psi''(\La) t^2 + O(t^3). $$
Combining previous Taylor expansions yields: $ w_\alpha(Q + t \vec{\nu}) = O(t^{2+\alpha})$ as $t \rightarrow 0^+$. Together with the estimate $w_{reg}(Q + t \vec{\nu}) = O(t^2)$, it implies the upper bound   \eqref{ubcase2}.

\begin{remark} If the domain $\Om$ is not connected, our method can still be applied, provided the connected components of $\Om$ can be ``ordered'', in the following sense: for every direction of the moving hyperplanes $H_\la$, for all $\la<\La$, $H_\la$ intersects at most one connected component of $\Om$. In this case, it can be checked that our arguments remain valid. In particular, if $\Om$ has radial symmetry, we infer that $\Om$ is a disc.

\label{rmk:rings}
\end{remark}

\section*{Appendix}

%
%
%
%
%
%
%
%

This appendix is devoted to the calculation of the jacobian of the change of variables $x=(x_1,x_2)\to (s,r)$, where $x$ belongs to a tubular neighbourhood $\Td$ of $\pa \Om$, $s$ is the arc-length, and $r=d(x,\pa \Om)(2\mathbf{1_{x\in \Om}} - 1)$. Using the same notation as in Section \ref{sec:derivee-forme}, we write
$$
x=p(s) - r n(s),
$$
where $p(s)\in \R^2$ is the point of $\pa \Om$ with arc-length $s$, and $n(s)$ is the outward pointing normal.
We deduce that
$$\begin{aligned}
\frac{\pa x}{\pa s}= \tau (s)- r \frac{dn}{ds}=(1+\kappa(s) r) \tau (s),\\
\frac{\pa x}{\pa r}=-n(s).
\end{aligned}
$$
Let $\theta$ be the oriented angle between $e_1$ and $n$. Then, up to a change of the orientation of the curve,
$$
n(s)= \begin{pmatrix}
\cos \theta(s)\\\sin \theta(s)       
      \end{pmatrix},\quad
\tau (s)= \begin{pmatrix}
-\sin \theta(s)\\\cos \theta(s)       
      \end{pmatrix}.
$$
We infer that the jacobian matrix of the change of variables is
$$
\begin{pmatrix}
\ds\frac{dx_1}{ds}& \ds\frac{dx_1}{dr}\\ &\\
\ds\frac{dx_2}{ds}& \ds\frac{dx_2}{dr}
\end{pmatrix}
= 
\begin{pmatrix}
-(1+\kappa(s) r) \sin \theta(s)& - \cos \theta(s) \\
(1+ \kappa(s) r) \cos \theta(s)&-\sin \theta(s)
\end{pmatrix},
$$
and therefore the jacobian of the change of variables is $|1+\kappa(s) r|$.

\section*{Acknowledgements}
The authors wish to thank Jean-Michel Roquejoffre and Cyril Imbert for helpful discussions, and they acknowledge   grant ANR-08-JC-JC-0104.

\bibliography{lapla_frac}

\end{document}